\titlespacing*{\section}{0pt}{\baselineskip}{0.5\baselineskip}
\titlespacing*{\subsection}{0pt}{\baselineskip}{0.5\baselineskip}
\newcommand{\R}{\mathbb{R}} %\R for reelle tal
\newcommand{\N}{\mathcal{N}} %\N for naturlige tal
\newtheorem{assumption}{Assumption}
\newtheorem{my_definition}{Definition}
\newtheorem{my_corollary}{Corollary}
\newtheorem{my_proposition}{Proposition}
\newtheorem{my_lemma}{Lemma}
\newtheorem{my_remark}{Remark}
\crefname{assumption}{assumption}{assumptions}
\Crefname{assumption}{Assumption}{Assumptions}
\crefname{my_corollary}{corollary}{corollaries}
\Crefname{my_corollary}{Corollary}{Corollaries}
\crefname{my_lemma}{lemma}{lemmas}
\Crefname{my_lemma}{Lemma}{Lemmas}
\crefname{my_remark}{remark}{remarks}
\Crefname{my_remark}{Remark}{Remarks}
\crefname{my_definition}{definition}{definitions}
\Crefname{my_definition}{Definition}{Definitions}
\crefname{my_proposition}{proposition}{propositions}
\Crefname{my_proposition}{Proposition}{Propositions}
\newcommand{\Poincare}{\text{Poincar\'e}}
\newcommand{\GibbsMeasure}{{\mu_\epsilon}}
\newcommand{\TruncatedGibbsMeasure}{{\mu_{\epsilon, U}}}
\newcommand{\LebesgueMeasure}[1]{{\mu_{#1}}}
\newcommand{\VAR}{\mathrm{Var}}
\newcommand{\Dirichletform}{\mathcal{D}}
\newcommand{\sobolevspace}[1]{\mathrm{H}^{1}}
\newcommand{\ud}{\mathrm{d}}
\newcommand{\optimalset}[1]{{{S}^{#1}}}
\newcommand{\optimalsetinterior}[1]{S^{#1}}
\newcommand{\dist}{\mathrm{dist}}
\newcommand{\udiv}{\mathrm{div}}
\let\det\relax
\newcommand{\det}{\mathrm{det}}
\newcommand{\PLconstant}{\nu}
\newcommand{\NeummanEigenvalue}{\lambda_1^{n}}
\newcommand{\Eigenvalue}{\lambda_1}
\newcommand{\PIconstant}{\rho}
\newcommand{\errorboundconstant}{\nu_{eb}}
\newcommand{\generator}{\mathcal{L}}
\newcommand{\red}[1]{{#1}}
\newcommand{\M}{\mathcal{M}} %\M for rationale tal
\newcommand{\logPLmeasure}{\texttt{Log-PL$^\circ$}}
\newcommand{\PLcirc}{\texttt{PL$^\circ$}}
\newcommand{\optional}[1]{}
\title[Poincar\'e Inequality for Local Log-Polyak-{\L}ojasiewicz Measures]{Poincar\'e Inequality for Local Log-Polyak-{\L}ojasiewicz Measures
: Non-asymptotic Analysis in Low-temperature Regime
}
\date{}
\begin{document}
\maketitle
\begin{abstract}

\noindent Potential functions in highly pertinent applications, such as deep learning in over-parameterized regime, are empirically observed to admit non-isolated minima.
To understand the convergence behavior of stochastic dynamics in such landscapes, we propose to study the class of \logPLmeasure\ measures $\mu_\epsilon \propto \exp(-V/\epsilon)$, where the potential $V$ satisfies a local Polyak-Łojasiewicz (P\L) inequality, and its set of local minima is provably \emph{connected}. 
% We establish the Poincaré inequality (PI) for a class of Gibbs measures $\mu_\epsilon \propto \exp(-V/\epsilon)$, where the potential $V$ satisfies a local Polyak-Łojasiewicz (P\L) inequality, and its set of local minima is \emph{connected}. 
Notably, potentials in this class can exhibit local maxima and we characterize its optimal set $\optimalset{}$ to be a compact $\mathcal{C}^2$ \emph{embedding submanifold} of $\mathbb{R}^d$ without boundary.
The \emph{non-contractibility} of $\optimalset{}$ distinguishes our function class from the classical convex setting topologically.
Moreover, the embedding structure induces a naturally defined Laplacian-Beltrami operator on $\optimalset{}$, and we show that its first non-trivial eigenvalue provides an \emph{$\epsilon$-independent} lower bound for the \Poincare\ constant in the \Poincare\ inequality of $\mu_\epsilon$.
As a direct consequence, Langevin dynamics with such non-convex potential $V$ and diffusion coefficient $\epsilon$ converges to its equilibrium $\mu_\epsilon$ at a rate of $\tilde{\mathcal{O}}(1/\epsilon)$, provided $\epsilon$ is sufficiently small. Here $\tilde{\mathcal{O}}$ hides logarithmic terms.

% Our results hold for sufficiently small temperature parameters $\epsilon$. Notably, the potential $V$ can exhibit local maxima, and its optimal set may be \emph{non-contractible}, distinguishing our function class from the convex setting.

% We consider two scenarios for the optimal set $\optimalset{}$: (1) $\optimalset{}$ has interior in $\mathbb{R}^d$ with a Lipschitz boundary, and (2) $\optimalset{}$ is a compact $\mathcal{C}^2$ embedding submanifold of $\mathbb{R}^d$ without boundary. In these cases, the Poincaré constant is bounded below by the spectral properties of differential operators on $\optimalset{}$—specifically, the smallest Neumann eigenvalue of the Laplacian in the first case and the smallest eigenvalue of the Laplace-Beltrami operator in the second. These quantities are temperature-independent.

% Our proof leverages the Lyapunov function approach introduced by \citet{PI_Lyapunov}, reducing the verification of the PI to the stability of the spectral gap of the Laplacian (or Laplacian-Beltrami) operator on $\optimalset{}$ under domain expansion. We establish this stability through carefully designed expansion schemes, which is key to our results.

% \emph{Empahsize the take-away message: Langevin on \logPLmeasure is the same as (Riemannian) Brownian motion on $S$.}
\end{abstract}
\begin{keywords}%
  \Poincare\ inequality, non-log-concave measure%
\end{keywords}
\section{Introduction}
Consider the Langevin dynamics
\begin{equation} \label{eqn_langevin_dynamics}
    \setlength{\abovedisplayskip}{6pt}
    \setlength{\belowdisplayskip}{6pt}
    \ud X(t) = -\nabla V(X(t))\ud t + \sqrt{2\epsilon} \ud W(t), 
\end{equation}
where $V\in \mathcal{C}^2(\R^d,\R)$ and $\epsilon > 0$ are the \emph{potential} and \emph{temperature} of the above system respectively, and $W(t)$ denotes the $d$-dimensional Brownian motion.
% Without loss of generality, we assume throughout the paper $V^* := \min_{x\in\R^d} V(x) = 0$.
Under mild conditions, the above Stochastic Differential Equation (SDE) yields a unique equilibrium $\GibbsMeasure$, commonly known as the Gibbs measure:
\begin{equation} \label{eqn_gibbs}
    \setlength{\abovedisplayskip}{5pt}
    \setlength{\belowdisplayskip}{5pt}
    \GibbsMeasure(x) = \frac{\exp(-V(x)/\epsilon)}{Z_{\epsilon}},\text{ where } Z_\epsilon = \int_{\R^d} \exp(-V(x)/\epsilon)\ud x.
\end{equation}
Langevin dynamics have various applications in domains like statistics, optimization, and machine learning, including sampling from target distribution $\GibbsMeasure$ \citep{wibisono2018sampling}, minimizing non-convex objectives \citep{raginsky2017non,zhang2017hitting},
% estimating the normalizing factor $Z_\epsilon$ in \cref{eqn_gibbs} \red{ref!}, 
% analyzing the differential private algorithms \citep{chourasia2021differential}, 
modelling Stochastic Gradient Descent (SGD) through SDE approximation \citep{li2017stochastic,ben2022high,paquette2022homogenization,li2024hessian}. 
% In this work, we focus on the low temperature regime, i.e. $\epsilon$ is close to zero, a challenging but practically relevant scenario. For example, in SGD approximations, $\epsilon$ corresponds to the step size of SGD which is typically small; when sampling from a posterior distribution, a low temperature allows the generated samples concentrate around the modes.
In practice, the low-temperature regime ($\epsilon\!\! \ll\!\! 1$) is particularly relevant: In the context of sampling, low temperatures enable sharper concentration of samples around the modes, while in the context of SGD approximations, $\epsilon$ corresponds to the step size, which is typically small.

An important aspect of Langevin dynamics is its convergence behavior toward equilibrium, also known as ergodicity \citep{Cattiaux2016HittingTF}.
This is often studied through functional inequalities like the \Poincare\ inequality (PI) and the Log-Sobolev inequality (LSI), which {quantify} the rate of convergence.
In this work, we focus on PI. Here, we recall that a measure $\mu$ satisfies PI with \Poincare\ constant $\PIconstant_\mu$ (formally defined in \Cref{definition_PI}) if for any test function $f$ in the Sobolev space weighted by $\mu$, its variance times $\PIconstant_\mu$ is bounded by its Dirichlet energy (both measured w.r.t. $\mu$).
% defined as follows.
% \begin{my_definition}[\Poincare-Wirtinger Inequality] \label{definition_PI}
%     A probability measure $\mu$ with support $\Omega\subseteq\R^d$ satisfies the Poincaré inequality with parameter $\PIconstant_\mu$, or shortly PI$(\rho_\mu)$, if one has for any $f\in \sobolevspace{1,2}(\mu)$ 
%     \begin{equation} \label{eqn_poincare_inequality}
%         \left\{\VAR_\mu(f) := \int_{\Omega} \left(f - \int_{\Omega} f \ud \mu\right)^2 \ud \mu\right\} \leq \frac{1}{\PIconstant_\mu} \left\{\Dirichletform_\mu(f) := \int_{\Omega} |\nabla f|^2 \ud \mu\right\},
%     \end{equation}
%     where $\PIconstant_\mu$ is called the \emph{PI constant} and $\sobolevspace{1,2}(\mu)$ denotes the Sobolev space weighted by $\mu$.
% \end{my_definition}
% \noindent Here $\VAR_\mu(f)$ and $\Dirichletform_\mu(f)$ are the variance and the Dirichlet energy of the test function $f\in\sobolevspace{1,2}(\mu)$ w.r.t. $\mu$. 
Clearly, the PI constant of $\GibbsMeasure$ is a function of the temperature $\epsilon$. 

The convergence of Langevin dynamics under the \(\chi^2\)-divergence is closely tied to the constant \(\PIconstant_{\GibbsMeasure}\). Specifically, to achieve \(\chi^2(X(t), \GibbsMeasure) \leq \omega\), the required time is \(t = \mathcal{O}\left(\frac{1}{\epsilon \PIconstant_{\GibbsMeasure}} \log \frac{1}{\omega}\right)\).
Hence, in the low-temperature region, the dependence on $\epsilon$ is the determining factor in the rate of convergence, which will be the \emph{main focus of the paper}.
Notably, this convergence behavior varies significantly between uni-modal and multi-modal distributions, and is reflected in the dependence of \(\PIconstant_{\GibbsMeasure}\) on \(\epsilon\):
\vspace{-.1cm}
\begin{itemize}[leftmargin=*]
    \setlength\itemsep{0em}
    \item (constant-time convergence) When $V$ is a strongly convex function or is close to one up to a perturbation of order $\epsilon$, a classical result is that $\PIconstant_\GibbsMeasure$ is of order $\Omega(\frac{1}{\epsilon})$ \citep{Bakry2013AnalysisAG}.
    Hence the mixing time $t = \tilde{\mathcal{O}}(1)$ for all low temperatures.
    \item (sub-exponential-time convergence) For a general log-concave measure, existing research primarily focuses on how the PI constant depends on the problem dimension $d$, commonly known as the Kannan-Lovász-Simonovits (KLS) conjecture \citep{chen2021almost,lee2024eldan}, but gives little emphasis to its dependence on $\epsilon$. Nevertheless, existing technique can be combined to show that $\PIconstant_\GibbsMeasure = \Omega(1)$ under a mild exponential integrability assumption, i.e. $\int_{\R^d} \exp(-V(x))\ud x < \infty$. See a proof in \Cref{section_pi_log_concave}. Moreover, one can easily construct a convex function such that the corresponding $\PIconstant_\GibbsMeasure$ is constant.
    Consequently, for the general class of log-concave measures, $\PIconstant_\GibbsMeasure = \Theta(1)$, and hence, the mixing time is $t = \tilde{\mathcal{O}}(\frac{1}{\epsilon})$, i.e. sub-exponential.
    
    \item (exponential-time convergence) When $V$ has at least two \emph{separated} local minima, convergence occurs in two distinct time-scales: a sub-exponential-time scale describes $X(t)$ reaching a meta-stable equilibrium in one of $V$'s local regions of attraction; and an exponential scale describes the transition between meta-stable equilibria, which typically takes $\Omega(\exp(\frac{1}{\epsilon}))$ time \citep{bovier2004metastability,gayrard2005metastability,AOP}. This exponential-time estimation is commonly known as the the Eyring-Kramers law \citep{eyring1935activated,kramers1940brownian}.
    % If $V$ is a general non-convex function, under mild growth conditions outside a compact set, one can estimate $\PIconstant_\GibbsMeasure = \Theta(\exp(\frac{1}{\epsilon}))$, by directly utilizing the Holley-Stroock perturbation principle on the said compact set. 
    % More refined analysis can be made for Morse functions, but the exponential dependence remains the same \citep{AOP}.
    % In particular, this estimation is tight by the Eyring-Kramers law when $\GibbsMeasure$ is multi-modal, i.e. $V$ has at least two separated (local) minima \citep{eyring1935activated,kramers1940brownian}.
\end{itemize}
\vspace{-.1cm}
{A more detailed literature review is deferred to \Cref{section_related_work}.}

In this paper, we focus on uni-modal measures, which allows convergence to occur within sub-exponential time\footnote{Our analysis can also address convergence toward a metastable equilibrium for multi-modal measures if the domain is properly partitioned, and appropriate boundary conditions are imposed. However, this is orthogonal to the primary focus of this project and is therefore not discussed here.}.
Multi-modal measures, as discussed above, exhibit global convergence on an exponential time scale, a behavior which is less relevant in practical applications. We leave its investigation for future work.

For uni-modal measures, existing research often assumes that the optimal set have a simplistic structure, such as a singleton (when \( V \)  is strongly convex) or a convex set (when \( V \) is convex). These assumptions, while analytically convenient, limit the scope of applications.
% For uni-modal measures, we note that for the potential functions typically studied in the literature, their optimal sets exhibit very simple topological or geometrical structures, being either singletons (when \( V \) is strongly convex) or convex sets (when \( V \) is convex). 
For example, in key applications like deep learning, the function \( V \) is high-dimensional, highly \emph{non-convex}, and many research suggest that, for over-parameterized models, the local minima are often \emph{degenerate} and {are \emph{non-singletons}} \citep{sagun2016eigenvalues,safran2016quality,freeman2017topology,sagun2017empirical,venturi2018spurious,liang2018understanding,draxler2018essentially,garipov2018loss,liang2018adding,nguyen2019connected,kawaguchi2020elimination,kuditipudi2019explaining,lin2024exploring}. 
In emerging domains like Large Language Model, the number of parameters is of order billions and the scaling laws implies that the over-parameterization phenomenon will be increasingly more prominent.
These important cases fall outside the scope of existing studies.
\vspace{-1mm}
\paragraph{\logPLmeasure\ measures} We aim to address more complex structural possibilities within the uni-modal framework.
% In the rest of the paper, we use $\optimalset{}$ to denote the collection of all local minima of the potential $V$.
% We take a step toward identifying assumptions that not only enable fast sub-exponential convergence of Langevin dynamics but also allow the corresponding optimal set to encompass more complex topological and geometric structures, hoping to broaden applicability of theoretical guarantees.
% In brief, we need assumptions that 
To define the uni-modal measure class of interest, we state the necessary assumptions.
% Define the local Polyak-Łojasiewicz (P\L) condition \citep{lojasiewicz1963topological,polyak1963gradient}.
% In this work, we consider the case where the potential function $V$ satisfies the following local Polyak-Łojasiewicz (P\L) inequality .
% \vspace{-2mm}
% \begin{my_definition}[Locally Polyak-Łojasiewicz (P\L) function] \label{def_local_pl}
%     A function $V \in\mathcal{C}^1(\R^d, \R)$ is said to be locally P\L, if there exists some constant $\PLconstant > 0$ such that for any connected component $\optimalset{}'$ in the collection of $V$'s local minima, there exists an open neighborhood, $\mathcal{N}(\optimalset{}') \supseteq \optimalset{}'$, such that
%     \begin{equation}\label{PL condition}
%         \setlength{\abovedisplayskip}{5pt}
%         \setlength{\belowdisplayskip}{5pt}
%         \forall x\in\mathcal{N}(\optimalset{}'),\ |\nabla V(x)|^2 \geq \PLconstant \left(V(x) - \min_{x\in\mathcal{N}(\optimalset{}')} V(x)\right).
%     \end{equation}
% \end{my_definition}
\begin{assumption} \label{ass_PL}
    Let $\optimalset{}$ the collection of all local minima of the potential $V\in \mathcal{C}^2$.
    % The potential $V\in \mathcal{C}^2$.
    For every connected component $\optimalset{}'$ in $\optimalset{}$, there exists an open neighborhood $\mathcal{N}(\optimalset{}')\supset \optimalset{}'$ such that, in $\mathcal{N}(\optimalset{}')$, $V\in\mathcal{C}^3$ and moreover it is locally P\L \citep{lojasiewicz1963topological,polyak1963gradient}:
    \begin{equation}\label{PL condition}
        \setlength{\abovedisplayskip}{2pt}
        \setlength{\belowdisplayskip}{5pt}
        \forall x\in\mathcal{N}(\optimalset{}'),\ |\nabla V(x)|^2 \geq \PLconstant \left(V(x) - \min_{x\in\mathcal{N}(\optimalset{}')} V(x)\right).
    \end{equation}
    % $\forall x\in\mathcal{N}(\optimalset{}'),\ |\nabla V(x)|^2 \geq \PLconstant \left(V(x) - \displaystyle\min_{x\in\mathcal{N}(\optimalset{}')} V(x)\right)$
    % The potential $V\in \mathcal{C}^2$ is locally P\L, and $V \in \mathcal{C}^3$ in every neighborhood $\mathcal{N}(\optimalset{}')$.
\end{assumption}
\noindent 
% The above assumption ensures ``sharp boundaries'' of the local optimal sets, meaning that the landscape within $\mathcal{N}(\optimalset{}')$ is not overly flat: To better see this intuition, \citet{rebjock2024fast} prove that $V$ exhibits quadratic growth in $\mathcal{N}(\optimalset{}')$, thereby enforcing a curvature that prevents flatness in this region.
The above assumption ensures
``sharp boundaries'' of the local optimal sets, i.e. the landscape within $\mathcal{N}(\optimalset{}')$ is not overly flat: \citet{rebjock2024fast} prove that the local P\L\ condition implies local quadratic growth, thereby enforcing a curvature that prevents flatness in this region.

\noindent Next, we need to exclude the possibility of saddle points so as to ensure the uni-modality. 
% since otherwise the Eyring-Kramers law implies convergence in exponential time-scale. 
In words, the following assumption states that a critical point of $V$ is either a local minimum or maximum\footnote{If we know a priori that $\GibbsMeasure$ is uni-modal, \Cref{ass_no_saddle} can be relaxed to ``for all $x\in \mathbb{R}^d \backslash \mathcal{N}(\optimalset{})$, if $\nabla V(x) = 0$, $\lambda_{\min}(\nabla^2 V(x)) < 0$''. However, this relaxed assumption is not sufficient to guarantee the uni-modality of $\GibbsMeasure$.}.
\vspace{-1mm}
\begin{assumption} \label{ass_no_saddle}
    Let $\mathcal{N}(\optimalset{})$ be the union of all the neighborhoods $\mathcal{N}(\optimalset{}')$ defined in \Cref{ass_PL}. For any $x\in \mathbb{R}^d \backslash \mathcal{N}(\optimalset{})$, if $\nabla V(x) = 0$, one has $\nabla^2 V(x) \prec 0$.
\end{assumption}
\vspace{-1mm}
\noindent We further need all the local minima to be within a compact set. This is a technical assumption and we believe it can relaxed to the coercivity of $V$. The latter is typically necessary for $Z_\epsilon < \infty$.
\vspace{-1mm}
\renewcommand{\theassumption}{3'}
\begin{assumption} \label{ass_coercivity}
    $V$ is coercive and all local minima of $V$ are contained in a compact set.
    % \footnote{This assumption is implied by the stronger \Cref{ass_growth_V} required by our later analysis. 
    % However, to ensure that $\GibbsMeasure$ is uni-modal, it is sufficient.
    % }.
    % \textcolor{red}{Make this Assumption 3'.}
\end{assumption}
\renewcommand{\theassumption}{\arabic{assumption}}
\addtocounter{assumption}{-1}
\vspace{-1mm}
\noindent 
% \textcolor{red}{highlight that the optimal set is a manifold.}
We call $V$ a {\PLcirc\ function} if it satisfies the above conditions, and refer to the Gibbs measure $\GibbsMeasure$ as a {\logPLmeasure}\ measure\footnote{The superscript $^{\circ}$ highlights that the mode of $\GibbsMeasure$ can be a $d$-sphere, a representative embedding submanifold.}. 
Note that if $V$ is globally P\L\ and coercive, {the above assumptions, apart from the regularity ones, follow directly.}
However, 
% unlike such a global P\L\ condition, 
our assumptions allow for the existence of local maxima, making them strictly weaker.
This distribution class is of interest for the following reasons:

\vspace{-1mm}
\begin{itemize}[leftmargin=*]
    \setlength\itemsep{0em}
    \item (Relevance to crucial problems.) The local P\L\ inequality is established for a class of over-parameterized neural networks \citep{oymak2020toward,liu2022loss}, 
    % \red{linear neural network?}
    % robust control \citep{fazel2018global}, \red{ref!}, 
    and P\L\ functions is an important class in the optimization literature \citep{karimi2016linear,yang2020global,rebjock2024fast}.
    \item (Connectivity of optimal set.) We prove in \Cref{lemma_unimodal} that, when the ambient dimension $d\geq 2$, \Cref{ass_PL,ass_no_saddle,ass_coercivity} together imply that the collection of all local minima has only one connected component. Hence, \logPLmeasure\ measures are uni-modal. Our proof is built on a generalized version of the famous Mountain Passing Theorem \citep{katriel1994mountain}.
    \item (Optimal set with pertinent structures.) Built on the connectivity result above, we prove that the optimal set $\optimalset{}$ is a \emph{$C^2$ embedding submanifold} of the ambient space $\R^d$ \emph{without boundary}. This class of optimal set is highly pertinent to the machine learning community \citep{cooper2018loss,cooper2020critical,fehrman2020convergence,li2022happens,wojtowytsch2024stochastic,levin2024effect}\footnote{Existing papers assume this structure of $\optimalset{}$ without proof, but we derive this result from our assumptions on $V$.}.
    In particular, the optimal set $\optimalset{}$ can be \emph{non-contractible}, thus \emph{topologically} different from convex sets.
    A simple example satisfying all assumptions is $V(x) = \|x\|^3/3 - \|x\|^2/2$, whose global optimal set is $\|x\| = 1$, forming a non-convex, non-contractible embedding submanifold.
    % See its plot in \Cref{fig:pl function}.
    % The embedding structure of the submanifold $\optimalset{}$ is crucial to define the Laplacian-Beltrami operator.
    % Please find an example in \Cref{section_PL_example}.
\end{itemize}

% \begin{figure}
%     \centering
%     \includegraphics[width=0.5\linewidth]{Arxiv/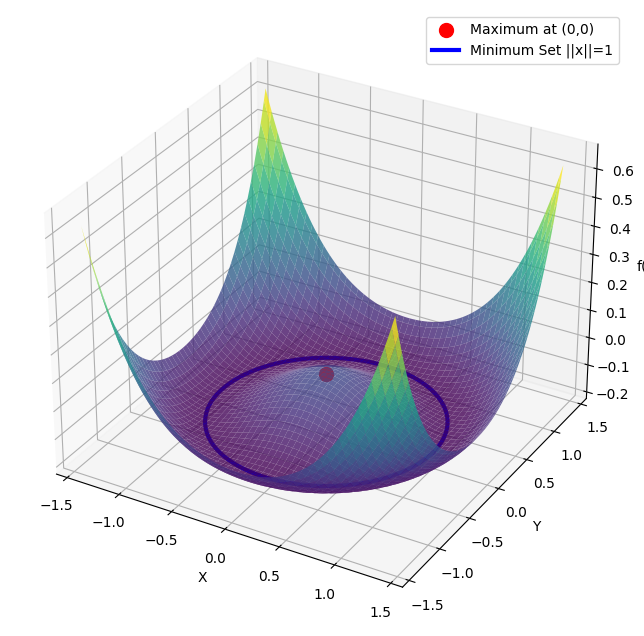}
%     \caption{Caption}
%     \label{fig:enter-label}
% \end{figure}
% Given the deep connection of P\L\ function to machine learning applications and its 
\noindent Consequently, we believe understanding the convergence of the Langevin dynamics towards a \logPLmeasure\ measure can provide a rich template for studying crucial problems like deep learning.

Further, to prove a fast sub-exponential convergence, w.r.t. ${1}/{\epsilon}$, of the Langevin dynamics, we make some technical assumptions, complied in \Cref{section_assumptions} for the ease of reference.

\vspace{-1mm}
\paragraph{Our result.} We prove that a \logPLmeasure\ measure, while far from being log-concave, possesses a 
PI constant that is \emph{non-asymptotically} lower bounded by a \emph{temperature-independent} constant, for a sufficiently small $\epsilon$. 
The applicable temperature region depends on the geometric structure of the optimal set $\optimalset{}$.
Our result is briefly summarized as follows, and formally stated in \Cref{thm_main}.
% Here $\epsilon_0$ is some constant solely depends on the potential $V$. 
% \red{This is possible since $\optimalset{}$ is connected.}
\begin{theorem}[informal] \label{theorem_informal}
    Suppose that the potential $V$ satisfies \Cref{ass_PL,ass_no_saddle,ass_coercivity} and some additional regularity assumptions in \Cref{section_assumptions}.
    Consider the case where $\optimalset{}$ is not a singleton.
    When $\epsilon$ is sufficiently small,
    the \Poincare\ constant (\ref{eqn_poincare_inequality}) of the measure $\GibbsMeasure$ satisfies $\PIconstant_{\GibbsMeasure} = \Omega(\Eigenvalue(\optimalsetinterior{}))$.
    Here $\Eigenvalue(\optimalsetinterior{}) > 0$ denotes the first non-trivial eigenvalue of the Laplacian-Beltrami operator on $\optimalsetinterior{}$. 
    % The Gibbs measure $\GibbsMeasure$ satisfies the \Poincare\ inequality (\ref{eqn_poincare_inequality}) with parameter $\PIconstant_{\GibbsMeasure} \geq \Omega(\NeummanEigenvalue(\optimalsetinterior{}))$  Here $\NeummanEigenvalue(\optimalsetinterior{})$ denotes the Neumann eigenvalue of the Laplacian operator on the optimal set $\optimalsetinterior{}$ in case ($\CIRCLE$), and denotes the corresponding eigenvalue of the Laplacian-Beltrami operator in case ($\Circle$).
    % It is known that $0<\Eigenvalue(\optimalsetinterior{})<\infty$.
    % and $\NeummanEigenvalue(\optimalsetinterior{}) > 0$ since $\partial \optimalsetinterior{}$ is Lipschitz.
    % \red{This statement should be revised, depending if we can include $\alpha=2$.}
\end{theorem}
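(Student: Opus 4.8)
The plan is to establish the equivalent bound $\VAR_{\GibbsMeasure}(f)\le C\,\Eigenvalue(\optimalset{})^{-1}\int_{\R^d}|\nabla f|^2\,\ud\GibbsMeasure$ for every $f$ in the $\GibbsMeasure$-weighted Sobolev space of \Cref{definition_PI}, with $C$ independent of $\epsilon$ once $\epsilon$ lies below a threshold fixed by the geometry of $\optimalset{}$. By \Cref{ass_coercivity}, \Cref{ass_no_saddle}, \Cref{lemma_unimodal} and the structural results derived from them, $\optimalset{}$ is a compact, connected, boundaryless $C^2$ submanifold on which $V\equiv V^{*}:=\min V$, the only other critical points are finitely many local maxima, and (by the local P\L\ inequality of \Cref{ass_PL} together with the quadratic-growth consequence of \citet{rebjock2024fast}) $V-V^{*}$ grows at least quadratically in the directions normal to $\optimalset{}$. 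Fix a tubular neighborhood $U$ of $\optimalset{}$ of small width $\delta$. A Laplace estimate gives $\GibbsMeasure(\R^d\setminus U)\le e^{-c_1/\epsilon}$, and since $|\nabla V|$ is bounded below on $\R^d\setminus U$ away from small balls around the maxima (which themselves carry mass $O(e^{-c_1/\epsilon})$), a crude weighted-Poincar\'e/Lyapunov argument shows $\GibbsMeasure$ restricted to $\R^d\setminus U$ contributes only $O(e^{-c_1/\epsilon})$ to the variance per unit Dirichlet energy. Feeding this into the mixture identity $\VAR_{\GibbsMeasure}(f)=p\,\VAR_{\GibbsMeasure|_U}(f)+(1-p)\VAR_{\GibbsMeasure|_{U^{c}}}(f)+p(1-p)\big(\mathbb{E}_{\GibbsMeasure|_U}f-\mathbb{E}_{\GibbsMeasure|_{U^{c}}}f\big)^2$ with $p=\GibbsMeasure(U)\to 1$ reduces everything to a Poincar\'e inequality for $\GibbsMeasure|_U$ with constant $\Omega(\Eigenvalue(\optimalset{}))$: the transition term is killed by its prefactor $1-p=O(e^{-c_1/\epsilon})$ after the squared mean gap is bounded by $e^{c_1/\epsilon}$ times the Dirichlet energy along a path joining $U^{c}$ to the bulk.

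\textbf{Fermi coordinates and nearly-Gaussian fibers.} On $U$ I would use the normal exponential map of $\optimalset{}$ to write points as $(s,z)$ with $s\in\optimalset{}$ and $z$ in the normal fiber of radius $\delta$, with change-of-variables Jacobian $J(s,z)$ bounded above and below. Taylor expansion yields $V(s,z)=V^{*}+\tfrac12 z^{\top}H(s)z+r(s,z)$, where the tangential Hessian vanishes (since $V$ is constant on $\optimalset{}$), the normal Hessian $H(s)=\nabla^{2}_{\perp\perp}V(s)$ is continuous and, by the local P\L\ inequality, uniformly positive definite on the compact $\optimalset{}$, and $|r(s,z)|\le L|z|^{3}$. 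Hence $\GibbsMeasure|_U$ has density $\propto J(s,z)\exp\!\big(-\tfrac{1}{2\epsilon}z^{\top}H(s)z-r(s,z)/\epsilon\big)$. The conditional law $\nu_s^{\epsilon}$ of $z$ given $s$ is, up to $O(\sqrt\epsilon)$ corrections governed by $r$, a centered Gaussian of covariance $\epsilon H(s)^{-1}$ truncated at radius $\delta$; in particular it satisfies a Poincar\'e inequality in $z$ with constant $\ge \lambda_{\min}(H(s))/(2\epsilon)$ and obeys $\mathbb{E}_{\nu_s^{\epsilon}}|z|^{2k}=O(\epsilon^{k})$. Integrating out $z$ by Laplace's method, the marginal law $\mu_S^{\epsilon}$ of $s$ has a density on $\optimalset{}$ proportional to $\det H(s)^{-1/2}(1+O(\epsilon))$; being bounded above and below, $\mu_S^{\epsilon}$ is comparable to the Riemannian volume of $\optimalset{}$, and hence (by a Holley--Stroock perturbation of the genuine Poincar\'e inequality of the volume measure) satisfies a Poincar\'e inequality with constant $\Omega(\Eigenvalue(\optimalset{}))$.

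\textbf{Conditional decomposition of the variance.} Set $\bar f(s)=\mathbb{E}_{\nu_s^{\epsilon}}[f(s,\cdot)]$. Then $\VAR_{\GibbsMeasure|_U}(f)=\mathbb{E}_{\mu_S^{\epsilon}}\!\big[\VAR_{\nu_s^{\epsilon}}(f(s,\cdot))\big]+\VAR_{\mu_S^{\epsilon}}(\bar f)$. The first term is at most $\tfrac{2\epsilon}{\inf_s\lambda_{\min}(H(s))}\,\mathbb{E}_{\GibbsMeasure|_U}|\nabla_z f|^{2}$ by the fiber Poincar\'e inequality. For the second, the Poincar\'e inequality of $\mu_S^{\epsilon}$ gives $\VAR_{\mu_S^{\epsilon}}(\bar f)\le C\,\Eigenvalue(\optimalset{})^{-1}\mathbb{E}_{\mu_S^{\epsilon}}|\nabla_S\bar f|^{2}$, so it remains to bound $|\nabla_S\bar f|$ by the full Dirichlet energy. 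Differentiating $\bar f(s)=\int f(s,z)\,\nu_s^{\epsilon}(\ud z)$ in $s$ and recentering via $\int \partial_s\log(\mathrm{density\ of\ }\nu_s^{\epsilon})\,\ud\nu_s^{\epsilon}=0$, one gets $\nabla_S\bar f(s)=\mathbb{E}_{\nu_s^{\epsilon}}[\nabla_S f]+\mathbb{E}_{\nu_s^{\epsilon}}\!\big[(f-\bar f(s))\,\partial_s\log(\mathrm{density})\big]$ (modulo frame/connection terms handled identically). The $s$-derivative of the log-density equals $-\tfrac{1}{2\epsilon}z^{\top}\partial_s H(s)z+\partial_s\log J+O(|z|^{3}/\epsilon)$, whose $L^{2}(\nu_s^{\epsilon})$ norm is $O(1)$ because the $1/\epsilon$ prefactors are cancelled by $\mathbb{E}|z|^{2k}=O(\epsilon^{k})$; Cauchy--Schwarz and the fiber Poincar\'e inequality then give $\big|\mathbb{E}_{\nu_s^{\epsilon}}[(f-\bar f)\,\partial_s\log(\cdot)]\big|^{2}\le O(1)\,\VAR_{\nu_s^{\epsilon}}(f)\le O(\epsilon)\,\mathbb{E}_{\nu_s^{\epsilon}}|\nabla_z f|^{2}$. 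Thus $\mathbb{E}_{\mu_S^{\epsilon}}|\nabla_S\bar f|^{2}\le 2\,\mathbb{E}_{\GibbsMeasure|_U}|\nabla_S f|^{2}+O(\epsilon)\,\mathbb{E}_{\GibbsMeasure|_U}|\nabla_z f|^{2}$. Assembling the pieces and using $|\nabla f|^{2}\ge(1-O(\delta))(|\nabla_S f|^{2}+|\nabla_z f|^{2})$ in Fermi coordinates yields $\VAR_{\GibbsMeasure|_U}(f)\le\big(C\,\Eigenvalue(\optimalset{})^{-1}+O(\epsilon)\big)\mathbb{E}_{\GibbsMeasure|_U}|\nabla f|^{2}$, and plugging back through the first step gives $\PIconstant_{\GibbsMeasure}=\Omega(\Eigenvalue(\optimalset{}))$ for all $\epsilon$ below a threshold set by $\delta$, $L$, and $\sup_s\|\partial_s H(s)\|$ --- i.e.\ by the geometry of $\optimalset{}$, as in \Cref{thm_main}. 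The non-singleton hypothesis is precisely what guarantees $\Eigenvalue(\optimalset{})\in(0,\infty)$; when $\dim\optimalset{}=0$ there is no tangential direction and the estimate degenerates to the classical $\Theta(1/\epsilon)$ regime.

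\textbf{Main obstacle.} The crux is the last step --- bounding $\|\nabla_S\bar f\|$ by the full Dirichlet energy --- and the quantitative Gaussian approximation of the fiber conditionals $\nu_s^{\epsilon}$ that underpins it: one must control the cubic remainder $r(s,z)$ and the $s$-dependence of both $H(s)$ and the Fermi frame uniformly over the compact $\optimalset{}$, and verify that every error term carries a positive power of $\epsilon$ so it is absorbed into $\Eigenvalue(\optimalset{})^{-1}$ for small $\epsilon$. The localization step is conceptually routine but must be done carefully near the local maxima so as not to reintroduce a spurious $\epsilon$-dependence.
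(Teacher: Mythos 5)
Your proposal reaches the right conclusion, but the route differs from the paper's in the key ``inner'' step. The paper shrinks the localization set with the temperature, taking $U=\optimalset{\sqrt{C\epsilon}}$, precisely so that the oscillation of $V/\epsilon$ on $U$ stays $O(1)$; it then applies Holley--Stroock to replace the truncated Gibbs measure by the \emph{uniform} measure on $U$, reducing everything to the Neumann eigenvalue $\NeummanEigenvalue(U)$, and finally proves a stability estimate $\NeummanEigenvalue(T(\tilde\epsilon))=\Eigenvalue(\optimalset{})(1\pm B\tilde\epsilon)$ via the tubular-neighborhood change of variables (Weyl's formula, \Cref{lemma_wely's formula}), the gradient transformation (\Cref{lemma_transformation}), and tensorization of PI over $\optimalset{}\times B(\tilde\epsilon)$. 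You instead keep a tube of \emph{fixed} width $\delta$ and run a two-scale (Grunewald--Otto--Villani--Westdickenberg-style) disintegration: near-Gaussian conditionals in the normal fibers with PI constant $\Omega(1/\epsilon)$, a marginal on $\optimalset{}$ comparable to the volume measure (hence PI constant $\Omega(\Eigenvalue(\optimalset{}))$ by Holley--Stroock on the manifold), and the covariance/recentering bound on $\nabla_S\bar f$ whose $1/\epsilon$ prefactors are cancelled by $\mathbb{E}|z|^{2k}=O(\epsilon^k)$. This is a legitimate alternative: it avoids the flatten-to-uniform step (which is only possible on an $\epsilon$-shrinking tube) and works directly with the Gibbs measure, at the price of needing the Morse--Bott/Fermi-coordinate expansion of $V$ ($\mathcal{C}^3$ regularity from \Cref{ass_PL} and the bounded second fundamental form from \Cref{ass_optimalset_interior} are exactly what control your Jacobian, frame, and cubic-remainder terms). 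The paper's route buys cleaner bookkeeping (all $\epsilon$-dependence is isolated in the single constant $\bar C=4LC$ and the $1\pm B\tilde\epsilon$ factor); yours gives constants depending additionally on $\sup_s\|\partial_s H(s)\|$ and the oscillation of $\det H(s)$, which is harmless for the $\Omega(\Eigenvalue(\optimalset{}))$ statement.

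One caution on your localization step: the mixture identity as you state it does not close on its own. The set $\R^d\setminus U$ can be disconnected (e.g.\ for $\optimalset{}=\{\|x\|=1\}$ it has an inside and an outside component), so $\GibbsMeasure$ restricted to $U^c$ satisfies no Poincar\'e inequality, and bounding $\VAR_{\GibbsMeasure|_{U^c}}(f)$ and the mean-gap term by $e^{c_1/\epsilon}\Dirichletform_{\GibbsMeasure}(f)$ with the \emph{same} exponent $c_1$ as in $\GibbsMeasure(U^c)\le e^{-c_1/\epsilon}$ requires routing paths through $U$ and splitting $U^c$ into a thin shell plus a doubly negligible remainder (near the local maxima and at infinity, where \Cref{ass_growth_V} is needed). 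This is exactly what the Lyapunov machinery of \Cref{Thm: lyapunov method} (and the paper's \Cref{lemma_Lyapunov_alpha_PL}, using \Cref{lemma_property_of_V}) packages rigorously; since you invoke a Lyapunov argument as your fallback, the gap is one of presentation rather than substance, but the mixture-identity shortcut should not be presented as if the exponents automatically match.
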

A direct consequence of the result above is a quantative characterization of the convergence behavior of Langevin dynamics and its discrete-time implementation in the low temperature region, such as the Langevin Monte Carlo (LMC) \citep[Theorem 7]{chewi2024analysis}, for a \logPLmeasure\ target measure. 
% With a little bit effort, we can also show that the second-order SME in \citep{li2017stochastic} approximates the noisy gradient descent uniform-in-time on \PLcirc\ functions. In contrast, previous uniform-in-time approximations are only established on strongly-convex potentials \citep{li2023uniform}.
% Moreover, since we assume $\optimalset{}$ has non-empty interior, we have $\GibbsMeasure$ weakly converges to the uniform distribution $\mu_{\optimalset{}}$ over $\optimalset{}$ \citep{hwang1980laplace}.
% Consequently, ULA provides an oracle for sampling from $\mu_{\optimalset{}}$ with computational complexity quantified by the \Poincare\ constant, which can potentially be used for volume estimation of non-convex domains.
% This provides an extension of the volume estimation techniques to the non-convex domain.
% and hence we can extend the volume estimation technique developed for convex body to the non-convex domain case \citep{lovasz2007geometry}.}
Our result also represents a significant step toward proving the stronger LSI, as the PI constant is often a crucial intermediary for estimating the LSI constant \citep[Theorem 1.2]{cattiaux2010note}.

Furthermore, to the best of our knowledge, {\emph{in the low temperature region}}, our work marks the first attempt to explore the behavior of Langevin dynamics in general non-convex landscapes with \emph{non-isolated minimizers}.
% \red{See a discussion in \Cref{section_discussion_morse}.}
In addition, technique-wise, we are the first to connect the \Poincare\ constant of a measure on $\R^d$ with the stability of the Laplacian-Beltrami eigenvalue on the optimal solution manifold, offering a novel perspective.

% beyond the class of Morse functions for which the Hessians at critical points are assumed to be non-degenerate. 
% \red{Although the current work precludes the saddle points globally by \Cref{ass_no_saddle}, our proof strategy can be applied to establish convergence to meta-stable equilibrium as long as saddle points are not present in a local region of attraction.}

% \red{Implications of our work: important step towards a better understanding of the convergence behavior of Langevin dynamics over non-convex landscapes. In particular, local minima can be a connected non-convex set, not a singleton, not a convex set; A step towards LSI, the stronger inequality; implications in ML: convergence of langevin dynamics and ULA, simulated annealing, minima-selection.}

\optional{
\red{This paragraph seems not necessary.}
\paragraph{Intuition behind our result.}
While the object of interest is the PI satisfied by $\GibbsMeasure$, we can get a clear intuition of \Cref{theorem_informal} through the corresponding SDE (\ref{eqn_langevin_dynamics}): Consider an SDE $\tilde X^\epsilon(t) = X(t/\epsilon)$ and one has
\begin{equation*}
    \ud \tilde X^\epsilon(t) = -\frac{1}{\epsilon}\nabla V(\tilde X^\epsilon( t)) \ud t + \sqrt{2}\ud W(t).
\end{equation*}
Note that near the optimal set $\optimalset{}$, the negative gradient is pointing towards the optimal set. Informally, as the temperature diminishes to zero, the first term in the above dynamics degenerates to a boundary local time process $L(t)$ that ensures $X(t) \in \optimalset{}$ almost surely \citep{grebenkov2019probability}. We write informally 
\begin{equation*}
    \lim_{\epsilon\rightarrow0} \tilde X^\epsilon(t) = \tilde X^0(t) \text{ where } \ud \tilde X^0(t) = \sqrt{2}\ud W(t) + \ud L(t).
\end{equation*}
The Fokker-Planck equation corresponds to the above dynamical system is exactly the heat equation on $\optimalsetinterior{}$ with a Neumann boundary condition: Let $\tilde \rho_t = \text{density}(\tilde X^0(t))$. One has
\begin{equation*}
    \partial_t \tilde\rho_t = \Delta \tilde\rho_t, \quad x \in \mathrm{int}\ \optimalsetinterior{} \quad \text{and} \quad \frac{\partial \rho_t}{\partial \vec n} = 0 \quad x \in \partial \optimalsetinterior{}. 
\end{equation*}
Here $\vec n$ denotes the normal direction of $\partial \optimalsetinterior{}$ at $x \in \partial \optimalsetinterior{}$. It is well known that the convergence rate of $\tilde\rho_t$ towards its equilibrium is determined by the corresponding Neumann eigenvalue of the Laplacian operator on $\optimalset{}$, formally defined in \Cref{definition_Neumann_eigenvalue}. \red{We recommend readers refer to \cite[Theorem 1.1]{Cattiaux2016HittingTF} for relevant discussion about this kind of convergence.}
}
\vspace{-2mm}
\paragraph{Our proof strategy.} Our proof is split into two steps. 
Let $U$ be a neighborhood of the optimal set $\optimalset{}$.
First, we reduce the PI constant for a measure supported on $\R^d$ to an eigenvalue problem of the Laplacian operator on $U$. 
Seeing $U$ as an expansion of $\optimalset{}$ (since $\optimalset{}\subset U$), we then relate the eigenvalues on $U$ and $\optimalset{}$ through a stability analysis. A more technical summary is as follows.
\vspace{-2mm}
\begin{enumerate}[leftmargin=*]
    \setlength\itemsep{0.2em}
    \item First, we partition the domain $\R^d$ into a collection of subdomains, allowing us to apply existing Lyapunov methods in \citep{AOP} for establishing the PI constant, as stated in \Cref{Thm: lyapunov method}. This reduces the estimation of the PI constant $\PIconstant_\GibbsMeasure$ to the estimation of the Neumann eigenvalue of the Laplacian operator on a domain \(U = \optimalset{\sqrt{C\epsilon}}
    \).
    Here $C$ is a constant independent of $\epsilon$, and $\optimalset{\eta} := \{x\in\R^d: \text{dist}(x, \optimalset{}) \leq \eta\}$. See the precise statement in \Cref{corollary_reduce_PI_to_Neumann_eigenvalue}.
    % ==========================================
    \item Second, we establish a temperature-independent lower bound for the Neumann eigenvalue on $U$. 
    Since $\optimalset{}$ is a $\mathcal{C}^2$ embedding submanifold, $\optimalset{\eta}$ matches the \emph{tubular neighborhood} of $\optimalset{}$. By the tubular neighborhood theorem \citep{MilnorStasheff1974}, up to a diffeomorphism, we can decompose the uniform distribution on $U$ as a pair of decoupled distributions along the tangent and the normal directions respectively. With the tensorization property of the \Poincare\ inequality \citep{Bakry2013AnalysisAG}, we show that the Neumann eigenvalue on $U$ is determined by the first non-trivial eigenvalue of the Laplacian-Beltrami operator on $\optimalset{}$, when $\epsilon$ is sufficiently small.
    Here, the Laplacian-Beltrami operator is defined based on the aforementioned embedding structure of the submanifold $\optimalset{}$.
    % The embedding structure of the submanifold $\optimalset{}$ is crucial to define the Laplacian-Beltrami operator.
    % ==========================================
\end{enumerate}
\vspace{-3mm}
\paragraph{Summary of contributions}
\begin{itemize}[leftmargin=*]
    \setlength\itemsep{0em}
    \item We identify the \PLcirc\ class as a rich and suitable template for studying the (sub-exponential time) convergence behavior of stochastic dynamics on potentials that admit non-isolated minima, a phenomenon empirically observed in crucial applications like deep learning in the over-parameterized regime.
    We prove that all local minima of a \PLcirc\ function are connected and its global optimal set $\optimalset{}$ formulates a compact $\mathcal{C}^2$ embedding submanifold of $\R^d$ without boundary.
    % propose the \PLcirc\ class, characterize its optimal set
    \item Built on the above characterization of the optimal set $\optimalset{}$, we show that the \Poincare\ constant of the  Gibbs measure $\GibbsMeasure$ (\ref{eqn_gibbs}) is lower bounded by the first non-trivial eigenvalue of the Laplacian-Beltrami operator on $\optimalset{}$, when $\optimalset{}$ is non-singleton. This eigenvalue is temperature-independent. As a direct consequence, the Langevin dynamics (\ref{eqn_langevin_dynamics}) converges to $\mu_\epsilon$ at a rate of $\tilde{\mathcal{O}}(1/\epsilon)$.
\end{itemize}
\vspace{-3mm}
% FINDME
\paragraph{Relation to the sampling literature} To better position our work within the literature, we restate our primary focus: understanding the dependence of the PI constant $\PIconstant_\GibbsMeasure$ on $\epsilon$, which directly impacts the convergence rate of the Langevin dynamics. Many excellent works in the sampling literature focus on analyzing the convergence of the discretized algorithms like LMC in the \emph{constant-temperature ($\epsilon=1$)} region, for (strongly) convex or nonconvex potentials under weak conditions, e.g. a joint of dissipativity and tail growth conditions \citep{erdogdu2021convergence}. We do not directly compare with them here since we focus on different temperature regions. 

\section{Preliminaries and Assumptions}\label{Preliminaries and Assumptions}
\subsection{\Poincare\ Inequality}
\begin{my_definition}[\Poincare-Wirtinger Inequality] \label{definition_PI}
    A probability measure $\mu$ with support $\Omega\subseteq\R^d$ satisfies the Poincaré inequality with parameter $\PIconstant_\mu$, or shortly PI$(\rho_\mu)$, if one has for any $f\in \sobolevspace{1,2}(\mu)$ 
    \begin{equation} \label{eqn_poincare_inequality}
        \setlength{\abovedisplayskip}{5pt}
        \setlength{\belowdisplayskip}{5pt}
        \left\{\VAR_\mu(f) := \int_{\Omega} \left(f - \int_{\Omega} f \ud \mu\right)^2 \ud \mu\right\} \leq \frac{1}{\PIconstant_\mu} \left\{\Dirichletform_\mu(f) := \int_{\Omega} |\nabla f|^2 \ud \mu\right\},
    \end{equation}
    where $\PIconstant_\mu$ is called the \emph{PI constant} and $\sobolevspace{1,2}(\mu)$ denotes the Sobolev space weighted by $\mu$.
\end{my_definition}
\vspace{-2mm}
Here $\VAR_\mu(f)$ and $\Dirichletform_\mu(f)$ are the variance and the Dirichlet energy of the test function $f\in\sobolevspace{1,2}(\mu)$ w.r.t. $\mu$.
For the Gibbs measure $\GibbsMeasure$, we have $\Omega = \R^d$.

% Poincare inequality

% For example, let us consider famous Bakry-\'Emery\ criterion, which relies on the convexity of the potential function.
% \begin{theorem}
% (Bakry-\'Emery\ criterion, [], Proposition 4.8.1, Corollary 4.8.2) Let $V: \Omega \subset \R^d \rightarrow \R$ be a potential function with Gibbs measure $\mu_{\epsilon}$ on $\Omega$ and assume that $\nabla^2 V(x) \geq \lambda > 0$ for all $x \in \R^d$. Then $\mu_{\epsilon}$ satisfies PI() with 
% \begin{equation*}
% \rho \geq \frac{\lambda}{\epsilon}.
% \end{equation*}
% \end{theorem}

\subsection{The Lyapunov Function Approach and the Perturbation Principle}
% Let us start with explaining the Lyapunov approach for deducing a $PI$. The central notation for the Lyapunov approach is the Lyapunov function, we use the same definition as in Definition 3.7 in \cite{AOP}, which rigorously define the Lyapunov function on the domain $\Omega$.
% \begin{my_definition}\label{Definition:lyaopunov function}
% (Lyapunov function for Poinc\'are inequality). Let $V:\Omega \rightarrow \R$ be a Hamiltonian with Gibbs measure $\mu_{\epsilon}(dx) = \frac{1}{Z_{\epsilon}} \exp\{-V/\epsilon\}dx$. Then $W: \Omega \rightarrow [0,\infty)$ is a $Lyapunov\ function$ for $V$ provided that:
% \begin{enumerate}
%     \item There exist a domain $U \subset \Omega$ and constants $b > 0$ and $\lambda > 0$ such that:
%     \begin{equation}\label{IEq: Lyapunov inequality}
%     \epsilon^{-1} \generator W \leq - \sigma W + b 1_U, \ \ \ a.e.\ \text{in}\ \Omega.
%     \end{equation}

%     \item $W$ satisfies Neumann boundary conditions on $\Omega$ such that the integration by parts formula holds
%     \begin{equation}\label{Eq: IBP}
%     \forall f \in H^1(\mu|_{\Omega}): \int_{\Omega}f(-\generator W)d\mu = \epsilon \int_{\Omega} \langle \nabla f, \nabla W \rangle d\mu.
%     \end{equation}
% \end{enumerate}
% \end{my_definition}
% In this section, we review the Lyapunov function approach from \citep{AOP}.
% which reduces the \Poincare\ constant estimation of the $\GibbsMeasure$ on $\R^d$ to the same estimation, but on a compact set $U$.
% To present the lemma that our work is built on, 
Define the truncated Gibbs measure on a given domain $U\subset \R^d$ as
\begin{equation} \label{eqn_truncated_gibbs}
\setlength{\abovedisplayskip}{5pt}
            \setlength{\belowdisplayskip}{5pt}
\mu_{\epsilon, U}(dx) = \frac{1_U}{Z_{\epsilon, U}} \exp{(- \frac{V(x)}{\epsilon})}dx, \ \ \ \text{with} \ \ 
Z_{\epsilon, U} = \int_{U} \exp(- \frac{V(x)}{\epsilon})dx.
\end{equation}
% \begin{my_definition}[Truncated Gibbs measure on $U$] \label{definition_truncated_gibbs}
% For a given domain $U \subset \mathbb{R}^d$, the truncated Gibbs measure $\mu_{\epsilon,U}$ is obtained from the Gibbs measure $\mu_{\epsilon}$ by restriction to the domain $U$, that is
% \begin{equation*}
% \mu_{\epsilon, U}(dx) = \frac{1_U}{Z_{\epsilon, U}} \exp{(- \frac{V(x)}{\epsilon})}dx, \ \ \ \text{with} \ \ 
% Z_{\epsilon, U} = \int_{U} \exp(- \frac{V(x)}{\epsilon})dx.
% \end{equation*}
% \end{my_definition}
The next statement shows that a Lyapunov function and the PI for the truncated measure $\mu_{\epsilon, U}$ can be combined to get the PI for the original Gibbs measure. Our work is built on this framework.
\begin{my_proposition}\label{Thm: lyapunov method}\citep[Theorem 3.8]{AOP}
        Let $\generator := -\nabla V \cdot \nabla + \epsilon\ \Delta$ be the infinitesimal generator associated with the Langevin dynamics in \cref{eqn_langevin_dynamics}.
        A function $\mathcal{W}:\R^d \rightarrow[1, \infty)$ is a \emph{Lyapunov function} for $\generator$ if there exists $U\subseteq\R^d$, $b>0$, $\sigma > 0$, such that
        \begin{equation} \label{eqn:Lyapunov}
        \setlength{\abovedisplayskip}{5pt}
            \setlength{\belowdisplayskip}{5pt}
            \forall x\in\R^d,\ \epsilon^{-1} \generator \mathcal{W}(x) \leq -\sigma \mathcal{W}(x) + b 1_{U}(x).
        \end{equation}
        Given the existence of such a Lyapunov function $\mathcal{W}$, if one further has that the truncated Gibbs measure $\mu_{\epsilon,U}$ satisfies PI with constant $\PIconstant_{\epsilon,U} > 0$, the Gibbs measure $\mu_\epsilon$ satisfies PI with constant
        \begin{equation} \label{eqn_estimate_poincare_constant_with_subdomain}
            \setlength{\abovedisplayskip}{5pt}
            \setlength{\belowdisplayskip}{5pt}
            \rho_\epsilon \geq \frac{\sigma}{b + \rho_{\epsilon,U}}\rho_{\epsilon,U}.
        \end{equation}
        % where $\sigma$ and $b$ are the constants corresponding to the Lyapunov function $W$.
        \end{my_proposition}
        \vspace{-0.2cm}
Following \citet{AOP}, we adopt $\mathcal{W}(x) = \exp\big(\frac{1}{2\epsilon} V \big)$ as the Lyapunov function throughout this work. This function satisfies $W(x) \geq 1$ since we assume WLOG $V^* = 0$.
The only remaining argument is to establish the condition in \cref{eqn:Lyapunov}. To be more precise, we need to find two constants $\sigma > 0, b > 0$ and some set $U \subset \R^d$ such that
\begin{equation}\label{eqn_requirement_Lyapunov}
    \setlength{\abovedisplayskip}{5pt}
    \setlength{\belowdisplayskip}{5pt}
    \frac{\generator \mathcal{W}}{\epsilon \mathcal{W}} = \frac{\Delta V}{2\epsilon} - \frac{|\nabla V|^2}{4\epsilon^2} {\leq} - \sigma + b1_U.
\end{equation}
We will find these two constants in \Cref{lemma_Lyapunov_alpha_PL}. 
% The construction of $U$ is the focus of \Cref{section_stability_neumann_eigenvalue}.
% \red{Should mention somewhere that integral by parts holds.}
% In nonconvex cases, we can also consider the $Holley-Stroock\ perturbation\ principle$.
In addition to the above Lyapunov function framework, the following standard perturbation principle will also be helpful to us.
\begin{my_proposition}[Holley-Stroock perturbation principle]\label{theorem_perturbation_principle}
% Let $V$ be a Hamiltonian with Gibbs measure $\mu(dx) = \frac{1_U}{Z_{\mu}} \exp\{-V/\epsilon\}$ on $U$. 
Let $V$ and $\tilde V$ be two potential functions defined on a domain $U$.
If the truncated Gibbs measures, defined in \cref{eqn_truncated_gibbs}, with energies $V$ and $\tilde V$ satisfy PI$(\PIconstant)$ and PI$(\tilde \PIconstant)$ respectively, one has
% \begin{equation*}
$\rho \geq \exp \big\{-\frac{1}{\epsilon}\big(\sup_{x \in U}(V - \tilde{V}) - \inf_{x \in U}(V - \tilde{V})\big)\big\}\tilde \rho$.
% \end{equation*}
% where $\text{osc}_U(V - \tilde{V}):= \sup_{x \in U}(V - \tilde{V}) - \inf_{x \in U}(V - \tilde{V})$.
\end{my_proposition}

\subsection{Properties of a $\mathcal{C}^2$ Embedding Submanifold} \label{subsection: submanifold}
The Laplacian-Beltrami operator on the optimal set $\optimalset{}$ is crucial to our analysis.
The definition of this operator is built on a pullback metric induced by the embedding structure of $\optimalset{}$, outlined as follows. 
Note that in the rest of the paper, we use $k$ to denote the dimension of the manifold $\optimalset{}$ and focus on the case $k \geq 1$.
For $k=0$, $\optimalset{}$ becomes a singleton, which is not the focus of our work.

\begin{my_definition}[Embedding submanifold in $\R^d$]\label{def_submanifold}
Consider a $\mathcal{C}^2$ manifold $M$ such that $M \subseteq \R^d$. If the including map $i_M: M \rightarrow \R^d$ is $\mathcal{C}^2$ and satisfies following two conditions:
% \begin{itemize}
	(1) The tangent map {$Di_M(x)$} has rank equal to \text{dim} $M$ for all $x \in M$;
	(2) $i_M$ is a homeomorphism of $M$ onto its image $i_M(M) \subset \R^d$, where $i_M(M)$ inherits the subspace topology from $\R^d$.
% \end{itemize}
We say that the including map $i_M$ is an embedding and  $M$ is a $\mathcal{C}^2$ embedding submanifold of $\R^d$.
\end{my_definition}

% \begin{my_remark}\label{Remark: local chart of embedding}	
If $M$ is a $k$-dimensional embedding submanifold of $\R^d$, the including map can be represented using the local coordinates of $M$ as follows: Assume that $\{\Gamma_i, \phi_i\}_{i \in \Lambda}$ is the maximal atlas of $M$. For $u = (u^1,...,u^k) \in \Gamma_i \subset \R^k$, the including map $i_M: M \rightarrow \R^d$ can be written as
	\begin{equation}\label{embedding structure}
    \setlength{\abovedisplayskip}{5pt}
            \setlength{\belowdisplayskip}{5pt}
        x^j = m^i_j(u^1, u^2,...,u^k), j \in \{1, \ldots, d\},
	% \left\{
 %    \begin{aligned}
 %    x^1 & = m^i_1(u^1, u^2,...,u^k), \\
	% x^2 & = m^i_2(u^1, u^2,...,u^k), \\
	% & \dots, \\
 %    x^d & = m^i_d(u^1, u^2,...,u^k), \ \ \ \ (u^1, u^2,...,u^k) \in \Gamma_i \subset \R^k,
	% \end{aligned}
	% \right.
	\end{equation}
	where $m^i_j: \Gamma_i \subset \R^k \rightarrow \R,\ j = 1,...,d$ are $\mathcal{C}^2$ coordinate functions. We also denote this embedding structure as $\mathcal{M}^i(u) = (m^i_1(u),...,m^i_d(u))$ on a local chart $(\Gamma_i, \phi_i)$.    
% \end{my_remark}
\begin{my_remark}
    WLOG, we assume that there is only one local chart $(\Gamma, \phi)$ in the rest of the paper, since we can always extend local results to a global one by the standard technique of partition of unity  \cite[Chapter 13]{tu2010introduction}. 
    We write the corresponding embedding structure as $\mathcal{M}$, omitting the superscript.
    Any non-trivial differences encountered in related proofs will be explicitly highlighted.
    % We will emphasize related proofs if we meet any non-trivial differences. 
\end{my_remark}
Given the above embedding structure, the embedding submanifold $M$ naturally inherits Riemannian structures from the ambient space $\R^d$. In the following, we describe the first and second fundamental forms on $M$. 
The reader can find more details about these structures in \Cref{Rie structure on local chart}.
% In the following, we describe the first fundamental form on $M$. This quantity is necessary for defining the Laplacian-Beltrami operator. 

% these structures of $\optimalset{}$ as follows. 
% \begin{itemize}
     
     \paragraph{The first fundamental form (or Riemannian metric).} We define the Riemannian metric $g_{M}$ on $M$ as the pullback metric induced by the including map $i_{M}: M \hookrightarrow \R^d$, i.e. $g_{M} = i^{\ast}_{M}(g_E)$, where $g_E$ is the standard Riemannian metric on $\R^d$ and $i_{M}^{\ast}$ is the pullback map associated with $i_{M}$.
     Now we can say that $(M, g_{M})$ is a $k$-dimensional Riemannian submanifold on $\R^d$ and the including map
     $ i_{M}: (M, g_{M}) \hookrightarrow (\R^d, g_E)$
     is a Riemannian embedding. Based on this Riemannian metric, on the local chart $(\Gamma, \phi)$, we can define the \emph{Laplacian-Beltrami operator} $\Delta_{g_{M}}$ as
     \begin{equation}\label{laplacian-betrami}
     \setlength{\abovedisplayskip}{5pt}
            \setlength{\belowdisplayskip}{5pt}
     -\Delta_{g_{M}} = - \frac{1}{\sqrt{\text{det}(g_{M})}}
     \sum_{i,j=1}^k \frac{\partial}{\partial u^i} \Big( \sqrt{\text{det}(g_{M})} g^{ij} \frac{\partial}{\partial u^j} \Big) \ \ \ \ u \in \Gamma,
	\end{equation}
    and the standard volume form $d\M$ as
    % \begin{equation*}
    $d\M(u) = \sqrt{\det(g_{M})}\vert du^1 \wedge ... \wedge du^k\vert, \ u \in \Gamma,$
    % \end{equation*}
    where $\text{det}(g_{M})$ is the determinant of the matrix $g_{M} = (g_{ij})$, and $(g^{ij})$ is the inverse matrix of $g_{M}$. 

% \end{itemize}
\paragraph{The second fundamental form.} 
Recall that $\M(u)$ is the embedding structure defined above. Let $\N_{k+1},...,\N_d: \Gamma \rightarrow \R^d$ be $d-k$ normal vectors on $M$ which are orthogonal to each other.
Define the matrix $G(l) = [G_{ij}(l) ]$ with $G_{ij}(l) = - \frac{\partial^2 \M(u)}{\partial u^i \partial u^j} \cdot \N_l(u), l = k+1,...,d$, for $u \in \Gamma \subset \R^k$.
With this notation, we can define the second fundamental form of the manifold $\mathcal{M}$ by $\Pi = - \sum_{l=k+1}^d \bigg\{ r^l \sum_{i,j=1}^k G_{ij}(l) du^idu^j \bigg\}$, for some small $(r^l)_{l=k+1}^d$.
We further define a matrix $\tilde G(l) = [G_j^i(l)]$ with $G_j^i(l) = \sum_{s = 1}^k g^{is} G_{sj}(l), l = k+1,...,d$, which will be useful to our presentation.

% $\mathcal{N}_l$ -> $G_{ij}(l)$ -> $G_j^i(l)$ -> $\tilde G(l)$ -> $\Pi$

% Due to space limitation, we elaborate on the second fundamental form in \Cref{Rie structure on local chart}, which will be used in \Cref{ass_optimalset_interior}.

\subsection{The Eigenvalue Problems of Differential Operators} \label{section_eigenvalue}
% \paragraph{Neumann Eigenvalue}
In the next, we introduce the eigenvalue problem of the Laplacian operator with Neumann boundary condition on a compact set $\Omega$.  
\begin{my_definition}[Neumann eigenvalue] \label{definition_Neumann_eigenvalue} Consider the eigenvalue problem for the Laplacian operator on a closed domain $\Omega$, subject to the Neumann boundary condition
\begin{equation*}
\setlength{\abovedisplayskip}{5pt}
            \setlength{\belowdisplayskip}{5pt}
\begin{aligned}
 - \Delta u = \lambda u,  x \in \mathrm{int}\ \Omega \text{ and } {\partial u}/{\partial \nu} = 0,  x \in \partial \Omega,
\end{aligned}
\end{equation*}
where $\nu$ is the outward normal vector to $\partial \Omega$ and $u \in \sobolevspace{1,2}(\Omega)$. The Neumann eigenvalue {$\NeummanEigenvalue(\Omega)$} is defined to be the minimum non-zero eigenvalue $\lambda$ to the above problem.
\end{my_definition}
\noindent 
Recall the \Poincare\ inequality in \Cref{definition_PI}.
It is known that $\NeummanEigenvalue(\Omega)$ matches the {best} \Poincare\ constant for the Lebesgue measure on $\Omega$. {We can use min-max formulation for Neumann eigenvalue of Laplacian operator to derive this fact, see \cite[Theorem 4.5.1]{Davies_1995}}, i.e., it admits the following variational formulation:
% \begin{my_remark}[Min-max formulation for Neumann eigenvalue] \label{remark_Neumann_eigenvalue_min_max}
    % Consider the first non-zero Neumann eigenvalue defined above. It admits the following variational formulation
    \begin{equation} \label{remark_Neumann_eigenvalue_min_max}
        \NeummanEigenvalue(\Omega) = \inf_{\stackrel{L\subseteq \sobolevspace{1,2}(\Omega)}{\text{dim}(L) = 2}} \sup_{u \in L} \left\{ \frac{|\nabla u|^2_{L^2(\Omega)}}{|u|^2_{L^2(\Omega)}}\right\}  = \min \left\{ \frac{|\nabla u|^2_{L^2(\Omega)}}{|u|^2_{L^2(\Omega)}}: u\in W^{1,2}(\Omega)\backslash \{0\}, 
        \int_\Omega u(x)\ud x = 0\right\}.
        % \partial u /\partial \Vec{n} = 0 \text{ on } \partial\Omega\right\}.
    \end{equation}
    % It is known that $\PIconstant(\mu_0) = \NeummanEigenvalue(\optimalset{})$.
    % We will show that $\lim_{\epsilon\rightarrow0} \PIconstant(\mu_\epsilon) = \NeummanEigenvalue(\optimalset{})$.
    % Denote the uniform distribution over $\optimalset{}$ by $\mu_0 := \mathrm{Uniform}(\optimalset{})$. By the Poincaré–Wirtinger inequality, we have that $\mu_0$ satisfies $\PI{\rho_0}$ for some constant $\rho_0 > 0$ that depends only on the optimal set $\optimalset{}$. 
% \end{my_remark}
We will exploit the above formulation for $\Omega = U = \optimalset{\sqrt{C\epsilon}}$ in our analysis. 
% \paragraph{Eigenvalue of the Laplacian-Beltrami Operator}

% \subsection{Case ($\Circle$): Eigenvalue of the Laplacian-Beltrami Operator}
% Under above assumptions, we can introduce some basic Riemannian structures of submanifold $S$ in $\R^d$.

% \red{A paragraph that introduces the eigenvalue of the Laplacian-Beltrami operator.}

Next, we introduce the eigenvalue problem of the Laplacian-Beltrami operator on the compact Riemannian submanifold $M$, which strongly depends on the non-trivial metric $g_{M}$.

\begin{my_definition}[Eigenvalue of the Laplacian-Beltrami operator] \label{def_eigenvalue_laplacian_beltrami} Consider the eigenvalue problem for the
Laplacian-Beltrami operator on the Riemaniann submanifold $(M,g_{M})$ without boundary,
\begin{equation} \label{eqn_eigenvalue_LB}
\setlength{\abovedisplayskip}{5pt}
            \setlength{\belowdisplayskip}{5pt}
- \Delta_{g_{M}} u = \lambda u, \ \ \ x \in M,
\end{equation}
where $u \in \sobolevspace{1,2}(M)$ and $-\Delta_{g_{M}}$ is the Laplacian-Beltrami operator on $M$ associated with metric $g_{M}$.
The eigenvalue $\Eigenvalue(M)$ is defined to be the minimum non-zero eigenvalue $\lambda$ to the above problem.
\end{my_definition}
Define the Dirichlet energy of Laplacian-Beltrami operator on $\optimalset{}$, for $f \in W^{1,2}(M)$
\begin{equation*}
\setlength{\abovedisplayskip}{5pt}
            \setlength{\belowdisplayskip}{5pt}
Q_M(f,f) = 
% \left\{
% \begin{aligned}
% & 
\int_{M} \langle f, -\Delta_{g_{M}} f \rangle_{g_E} d\M = \int_{M} \langle df, df \rangle_{g_{M}} d\M.
% & f \in W^{1,2}(\optimalset{}), \\
% & +\infty & \text{otherwise},  
% \end{aligned}
% \right.
\end{equation*}
Here $d$ is the exterior derivative on cotangent bundle $T^{\ast}M$, which can be written as 
% \begin{equation*}
$df = \sum_{i,j=1}^k g^{ij} \frac{\partial f}{\partial u^j} \frac{\partial}{\partial u^i} = \sum_{i=1}^k g^{ij} \nabla_{u^j}f$,
% \end{equation*}
on the local chart $(\Gamma, \phi)$ by the duality between tangent bundle $TM$ and cotangent bundle $T^{\ast}M$. Then min-max theory tells us that
\begin{equation*}
\setlength{\abovedisplayskip}{5pt}
            \setlength{\belowdisplayskip}{5pt}
\Eigenvalue(M) = \inf_{\tilde{L} \subseteq \sobolevspace{1,2}(M) : \text{dim} \tilde{L} = {2}} \sup_{u \in \tilde{L}} \frac{Q_{M}(u, u)}{\int_{M} |u|^2 d\M}.
\end{equation*}
% we also use the notation $\lambda_1(\optimalset{})$ to denote $\lambda_1(-\Delta_{g_S})$ in the following. 
The reader could find more materials about this part in \cite[Chapter3]{Brard1986SpectralGD}.
In our analysis, we will take $M = \optimalset{}$ in any manifold-related content.

\subsection{Additional Regularity Assumptions} \label{section_assumptions}
For the ease of reference, we summarize the required assumptions in this subsection.

\begin{assumption}[Behavior of $V$ beyond a compact set] \label{ass_growth_V}
    Beyond a compact set, $V$ satisfies the error bound inequality, i.e. $\exists R_0>0$ such that $\forall |x|\geq R_0$\footnote{Clearly, \cref{eqn:error bound} implies \Cref{ass_coercivity}. Henceforce, we  refer to \Cref{ass_growth_V} when \Cref{ass_coercivity} is required.
},
    \begin{equation} \label{eqn:error bound}
    \setlength{\abovedisplayskip}{5pt}
            \setlength{\belowdisplayskip}{5pt}
        |\nabla V(x)| \geq \errorboundconstant\dist(x, \optimalset{}).
    \end{equation}    
    Moreover, $\Delta V := \udiv \nabla V$ grows at most polynomially beyond a compact set, i.e. $\forall |x|\geq R_0,\ |\Delta V(x) | \leq C_{g} |x|^2$.
    WLOG, we assume that $R_0$ is sufficiently large so that for all $x\in \optimalset{}$, $|x|\leq R_0$.
\end{assumption}

\begin{assumption}\label{ass_optimalset_interior}
    % One of the following conditions holds:
    % \begin{enumerate}
        % \item[$(\Circle)$] 
        % The optimal set $\optimalset{}$ is a compact $\mathcal{C}^2$ embedding submanifold without boundary. 
        Let $k$ be the dimension of $\optimalset{}$. We assume $k \geq 1$, i.e. $\optimalset{}$ is not a singleton.
        Moreover, we assume $\optimalset{}$ to have a bounded second fundamental form: On the local chart $(\Gamma, \phi)$ of $S$, 
        % \begin{equation}\label{bounded curvature}
        \(
        \sup_{k+1 \leq l \leq d}\|\tilde{G}(l)\|_{\infty} 
        % \footnote{The boundedness condition of the second fundamental form is necessary. There exist curves in the plane that do not have a bounded curvature. Let us take the curve as $(x(t), y(t))$, then its second fundamental form is $\kappa(t) = \frac{|x''(t)y'(t) - x''(t)y'(t)|}{(x'^2(t) + y'^2(t))^{\frac{3}{2}}}$. It is easy to know that it is not bounded for ``Tractrix Curve"
        % \( (x(t), y(t)) = (a \sin t, a \ln (\tan (t/2)) + a \cos t), \kappa(t) = \Big| \frac{\tan t}{a}\Big|\)
        % at the point $(a, 0)$.
        % } 
        < \infty,
        \)
        where $\tilde{G}(l), l = k+1,...,d$ are defined in the end of \Cref{subsection: submanifold}.
        
        % are the matrices that define the second fundamental form of $\optimalset{}$.
        % \item[$(\CIRCLE)$] The optimal set $\optimalset{}$ has non-empty interior and satisfies $\partial \optimalsetinterior{} \in \text{Lip}(M, \delta, s, \mathbf{V}, \mathbf{\Lambda})$.
    % \end{enumerate}
\end{assumption}
\begin{my_remark}
The boundedness condition of the second fundamental form is necessary. There exist curves that do not have a bounded curvature: Consider the ``Tractrix Curve" parameterized by $t$, defined as \( (x(t), y(t)) = (a \sin t, a \ln (\tan (t/2)) + a \cos t)\). Its second fundamental form is $\kappa(t) = \frac{|x''(t)y'(t) - x''(t)y'(t)|}{(x'^2(t) + y'^2(t))^{\frac{3}{2}}} = \Big| \frac{\tan t}{a}\Big|$, which is not bounded at the point $(a, 0)$ or equivalently $t=\frac{\pi}{2}$.
\end{my_remark}
\begin{my_remark}[Dimension of $\optimalset{}$] \label{remark_dimension_of_optimal_set}
    We prove in \Cref{lemma_manifold_structure} that $\optimalset{}$ has no boundary and hence its dimension $k$ is strictly smaller than $d$: If $k=d$, then $V$ is a constant function, which contradicts with \Cref{ass_growth_V}. Consequently, we focus on $1 \leq k \leq d-1$. For $k=0$, i.e. $\optimalset{}$ degenerates to a singleton, the PI constant under a global P\L\ condition has been recently established in \citep{Chewi2024-ch}. In this case, the PI constant is of order $\Omega(\frac{1}{\epsilon})$ since $V\in\mathcal{C}^2$ and the P\L\ condition implies that $V$ is locally strongly convex near the unique minimum.
\end{my_remark}

\begin{my_remark}[$\Eigenvalue(\optimalset{})$ is non-trivial]
    % \red{@Yungong, please add references showing that $0< \NeummanEigenvalue(\optimalset{}) < \infty$.} 
    The \Poincare\ inequality on Riemannian manifold has been well-studied. We refer readers \cite[Theorem 2.10]{Hebey1999NonlinearAO} to the case of compact Riemannian manifold, showing that $0< \Eigenvalue(\optimalset{}) < \infty$.
    % and \red{\cite{poincareIneqWithCompleteManifold} to the case of complete Riemannian manifold with some curvature and growth conditions}.
\end{my_remark}
% \begin{my_remark}[$\NeummanEigenvalue(\optimalset{})$ is non-trivial in case $(\CIRCLE)$]
%     Note that the optimal set $\optimalset{}$ is connected and has Lipschitz boundary, the \Poincare\ constant of $\optimalset{}$ exists, see \cite[Theorem 3, \S 5.6]{evans10}, so that $\lambda_1(\optimalset{}) > 0$. Moreover, we have $\lambda_1(\optimalset{}) < \infty$ by \cite[Remark 14]{burenkov2002spectral}.  
% \end{my_remark}

% \begin{my_remark}[Some incompatible configurations]
%     While our result applies on different combinations of the P\L\ index $\alpha$ and the two cases in \Cref{ass_optimalset_interior}, certain combinations are incompatible. One most notable case is $\alpha = 2$ and $\optimalset{}$ is a Lipschitz domain with interior: If $\optimalset{}$ has interior, $V \equiv 0$ on $\optimalset{}$ and hence $\nabla^2 V \equiv 0$ on $\partial \optimalset{}$ since $V \in \mathcal{C}^2$. This contradicts with the quadratic growth of $V$ which is implied by $2$-PL \citep[Proposition 2.2]{rebjock2024fast}.
%     Another interesting observation is that if we strengthen the \Holder\ continuity of $\nabla^2 V$ to Lipschitz continuity in \Cref{ass_regularity_V} for $\alpha \in (1.5, 2)$, local $2$-PL also holds. See the discussion in \citep[Remark 2.21]{rebjock2024fast}.
% \end{my_remark}

% \clearpage
% ===================================================================================================

% ===================================================================================================
\section{Step 1: Reduction to the Neumman Eigenvalue Problem} \label{section_reduction_to_neumann_eigenvalue}
% Let $U$ be any subdomain that satisfies some set inclusion relations, specified in (\ref{eqn_requirement_on_U}). 
We show that, when the temperature $\epsilon$ is sufficiently small, the \Poincare\ constant of $\GibbsMeasure$ can be lower bounded by the Neumann eigenvalue of the Laplacian operator on a closed domain $U$ in $\R^d$. 
% Here the choice of $U$ should satisfy some set inclusion relation (\ref{eqn_requirement_on_U}).
We will first list a few useful properties of the \logPLmeasure\ measures and then present our proof.

\subsection{Properties of the \logPLmeasure\ measures}
The most important property of the \logPLmeasure\ measures is its uni-modality, and the manifold characterization of its mode, 
which we state in the following. Detailed proof can be found in \Cref{section_log_PL_uni_modal}.
\begin{my_proposition}[Uni-modality] \label{lemma_unimodal}
    Under \Cref{ass_PL,ass_no_saddle,ass_growth_V}, the all local minima of the potential function $V$ are connected. Hence, $V$ has only one connected global minima set $\optimalset{}$.
    % and the Gibbs measure $\GibbsMeasure$ is single modal.
\end{my_proposition}
\noindent The proof the following corollary is built on \citep[theorem 2.16]{rebjock2024fast}. The purpose of this restatement is to explicitly exclude the possibility of boundary, which is not discussed in the previous work, and strengthen their local manifold structure to a global one using the above connectivity result. The absence of boundary significantly simplifies the eigenvalue problem (\ref{eqn_eigenvalue_LB}).
\begin{my_corollary}[Manifold structure] \label{lemma_manifold_structure}
$\optimalset{}$ is a $\mathcal{C}^2$-embedding submanifold of $\R^d$ {without boundary}.
\end{my_corollary}

We then characterize the properties of $V$ in three different regions: (1) when $x$ is close to the global minima set; (2) when $x$ is close to some local maximum; (3) otherwise.
\begin{my_lemma} \label{lemma_property_of_V}
    Under \Cref{ass_PL,ass_no_saddle,ass_growth_V}, the function $V$ satisfies the following properties:
    \begin{itemize}[leftmargin=*]
        \setlength\itemsep{0em}
        \item For any $x \in \partial \mathcal{N}(\optimalset{})$, there exists some constant $\delta_0 > 0$ such that $\mathrm{dist}(x, \optimalset{}) \geq \delta_0$.
        \item Let $X$ denote the set of all local maxima of the potential $V$. 
        % $X$ contains at most finitely many singletons. Moreover, 
        If $X\neq \emptyset$, there exists constants $R_1 > 0$ and $\mu^{-} > 0$ such that for all $x \in \mathcal{N}(X) := \{x: \mathrm{dist}(x, X) \leq R_1\}$, $\nabla^2 V(x) \preceq - \mu^{-} I_d$.
        \item For $x \notin {\mathcal{N}(X)} \cup \mathcal{N}(\optimalset{})$, there exists some constant $g_0>0$ such that $\|\nabla V(x)\| \geq g_0$.
    \end{itemize}
\end{my_lemma}

The following results are direct implications of our assumptions. The constants therein will be used in the statement of Step 1.
\begin{my_lemma} \label{lemma_useful_constants}
     Under \Cref{ass_PL,ass_no_saddle,ass_growth_V}, the function $V$ satisfies the following properties:
    \begin{itemize}[leftmargin=*]
    \setlength\itemsep{0em}
        \item Since $V \in \mathcal{C}^2$, there exists some constant $L$ such that $\|\nabla^2 V(x)\|\leq L$ for all $x \in \mathcal{N}(\optimalset{})$.
        \item Recall $R_0$ from \Cref{ass_growth_V}. Since $V \in \mathcal{C}^2$, there exists some constant $M_\Delta>0$ such that $|\Delta V(x)| \leq M_\Delta$ for $\|x\|\leq R_0$. WLOG, assume $M_\Delta \geq d \mu^{-}$. Otherwise, simply set $\mu^{-} = M_\Delta / d$.
        \item There exists a neighborhood $\mathcal{N}'(\optimalset{})$ of $\optimalset{}$ on which the error bound inequality \cref{eqn:error bound} holds with a constant $\PLconstant'$ \citep{rebjock2024fast}.
        For simplicity, we assume $\mathcal{N}'(\optimalset{}) = \mathcal{N}(\optimalset{})$, and $\PLconstant' = \PLconstant$ since otherwise we can set $\mathcal{N}(\optimalset{}) = \mathcal{N}(\optimalset{}) \cap \mathcal{N}'(\optimalset{})$ and $\PLconstant = \min\{\PLconstant, \PLconstant'\}$ in \Cref{ass_PL}. All derivations remain unchanged.
    \end{itemize} 
\end{my_lemma}

\subsection{Proof Sketch of Step 1}

Our proof is built on the Lyapunov approach described in \Cref{Thm: lyapunov method}. To meet the requirements in \cref{eqn_requirement_Lyapunov}, for any $x$ outside of a closed domain $U = \optimalset{\sqrt{C\epsilon}}$, we need (i) a lower bound for the gradient norm $||\nabla V(x)||$ and (ii) an upper bound for the Laplacian $|\Delta V(x)|$. 
% We discuss these points as follows. 
% Point (iii) will be the focus of \Cref{section_stability_neumann_eigenvalue}.

\paragraph{(i) Lower bound of gradient norm $\|\nabla V(x)\|$.}
There are four situations:
% \begin{itemize}
    (a) When $x$ is outside of a compact set, we utilize the error bound inequality in \Cref{ass_growth_V};
    (b) When $x \in \mathcal{N}(\optimalset{})$, we utilize the error bound inequality in \Cref{lemma_useful_constants};
    % \textcolor{red}{fix here. properly cite \citep{rebjock2024fast}}
    % \vspace{-7mm}
    % \begin{my_lemma}\label{Lemma:lower bound of nablaV}
    % Suppose \Cref{ass_PL,ass_no_saddle,ass_coercivity} hold. 
    % The potential function $V$ satisfies the error bound inequality in \cref{eqn:error bound} on $\mathcal{N}(\optimalset{})$, where we recall $\mathcal{N}(\optimalset{})$ in \Cref{ass_no_saddle}.
    % % \begin{equation} \label{eqn:error bound}
    % %     |\nabla V(x)| \geq \errorboundconstant\dist^{\frac{1}{\alpha-1}}(x, \optimalset{}).
    % % \end{equation}    
    % \end{my_lemma}
    (c) When $x$ is close to a local maximum, it suffices to use the trivial bound $||\nabla V(x)|| \geq 0$;
    (d) Otherwise, we utilize the third property in \Cref{lemma_property_of_V}.
% \end{itemize}

\paragraph{(ii) Upper bound of Laplacian $|\Delta V(x)|$.} 
% The point (ii) provides hint for the requirements of the subdomain $U$ in point (iii) and is the major technical novelty of Step 1. 
    To bound  the Laplacian $|\Delta V(x)|$, we partition $\R^d\backslash\optimalsetinterior{\sqrt{C\epsilon}}$ into two of subdomains $\Xi_1$ and $\Xi_2$ and sketch the treatments:
    % \begin{align}
    %     \R^d = \underbrace{\left\{x: 2R_0 \leq \|x\| \right\}}_{:=\Xi_{1}} \cup&\
    %     \underbrace{\left\{x: \sqrt{C\epsilon} \leq \dist(x, \optimalset{}) \text{ and } \|x\| \leq 2R_0\right\}}_{:=\Xi_2} \cup \optimalsetinterior{\sqrt{C\epsilon}}. \label{eqn_partition_of_R_d}
    % \end{align}
    % Here, we recall $R_0$ from \Cref{ass_growth_V}, $C$ is some constant independent of $\epsilon$ defined in \Cref{lemma_Lyapunov_alpha_PL}.
% Now we investigate how this condition is satisfied under different circumstances.
% \begin{itemize}
    For $x\in\Xi_{1} = \left\{x: 2R_0 \leq \|x\| \right\}$, we bound the Laplacian term $|\Delta V(x)|$ by utilizing the growth of $\Delta V$ from Assumption \ref{ass_growth_V}, in which $R_0$ is defined.
    {For $x\in\Xi_{2} = \left\{x: \sqrt{C\epsilon} \leq \dist(x, \optimalset{}) \text{ and } \|x\| \leq 2R_0\right\}$, there are three situations: (1) $x$ is close to some local maximum; (2) $x \in \mathcal{N}(\optimalset{})$; (3) otherwise. We treat these three situations separately.
    The power index $\frac{1}{2}$ in $\Xi_2$ is the largest value such that the positivity of $\sigma$ in \cref{eqn_requirement_Lyapunov} still holds when $x \in \mathcal{N}(\optimalset{})$.} Here $C$ is some constant independent of $\epsilon$ defined in \Cref{lemma_Lyapunov_alpha_PL}. 
    % \item The estimation on $\Xi_{i\geq 1}$ is achieved in a recursive manner: Suppose that $\epsilon$ is sufficiently small so that \Cref{ass_regularity_V} applies, i.e. $(C\epsilon)^{n_i} \leq \delta$. We can upper bound $|\Delta V(x)|$ by exploiting the facts that $\nabla^2 V(x) \equiv 0$ for $x\in\optimalset{}$ and $\nabla^2 V$ is locally \Holder\ continuous. More precisely, we have
    % \begin{equation*}
    %     \|\nabla^2 V(x) - \nabla^2 V(x^*)\| \leq L \| x - x^*\|^{\frac{2-\alpha}{\alpha - 1}} \leq L \left(C\epsilon\right)^{n_{i} \cdot \frac{2-\alpha}{\alpha - 1}}.
    % \end{equation*}
    % We can identify the index $n_{i+1}$ as the largest value such that the positivity of $\sigma$ in \cref{eqn_requirement_Lyapunov} still holds.
    % Moreover, we explicitly calculate the expression of $\{n_i\}_{i=1}^\infty$, show that $n_{i+1} > n_i$, and prove that the sequence converges to $\frac{\alpha-1}{\alpha}$. 
% \end{itemize}
Note that for each subdomain $\Xi_i, i=1,2$, there will be a corresponding value of $\sigma_i$ and we set $\sigma = \inf_{i\in\{1, 2\}} \sigma_i$. 
Besides, we estimate $b$ by restricting $x$ on $U = \optimalset{\sqrt{C\epsilon}}$. 

\subsection{Establishing the bound on $b$ and $\sigma$.}
% We start by stating the local error bound inequality of the potential function $V$, which will be used throughout our derivation. Please find its proof in \Cref{section_proof_of_error_bound}.
% bound the gradient
% \begin{my_lemma}\label{Lemma:lower bound of nablaV}
%     Suppose \Cref{ass_PL,ass_no_saddle,ass_coercivity} hold. 
%     The potential function $V$ satisfies the error bound inequality in \cref{eqn:error bound} on $\mathcal{N}(\optimalset{})$, where we recall $\mathcal{N}(\optimalset{})$ in \Cref{ass_no_saddle}.
%     % \begin{equation} \label{eqn:error bound}
%     %     |\nabla V(x)| \geq \errorboundconstant\dist^{\frac{1}{\alpha-1}}(x, \optimalset{}).
%     % \end{equation}    
% \end{my_lemma}
\noindent 

% bound Laplacian
% For the sake of a tight analysis, we bound the Laplacian term in \cref{eqn_requirement_Lyapunov} separately on different partitions of $\R^d \backslash \optimalsetinterior{(C\epsilon)^{\frac{\alpha-1}{\alpha}}}$, as previously mentioned. 

The following lemma formally states the above results, whose proof is in \Cref{proof_of_Lyapunov_alpha_PL}.
\begin{my_lemma} \label{lemma_Lyapunov_alpha_PL}
    Suppose that \Cref{ass_growth_V,ass_no_saddle,ass_optimalset_interior,ass_PL} hold.
    Define a constant $C = \frac{4M_\Delta}{\PLconstant^2}$.
    Suppose that $\epsilon$ is sufficiently small such that 
        $\epsilon \leq \min\{\frac{\errorboundconstant^2}{64C_g}, \frac{\delta_0^2}{C}, \frac{g_0^2}{4M_\Delta} \}$.
        % $\epsilon \leq \frac{\errorboundconstant^2}{64C_g}$, $\left(C\epsilon\right)^{\frac{1}{2}} \leq \delta_0$, $\frac{g_0^2}{4\epsilon^2} \geq \frac{M_\Delta}{\epsilon}$.
    % Define two sequences $\{n_i\}_{i=1}^\infty$ and $\{m_i\}_{i=1}^\infty$ such that 
    % \begin{equation}
    %     n_1 = \frac{\alpha-1}{2},\ m_i = \frac{1}{n_{i+1}}-2, \text{ and } m_i\cdot n_{i} = \frac{2n_{i+1}}{\alpha -1}.
    % \end{equation}
    % We have $m_i \geq \frac{2-\alpha}{\alpha-1}$ and $n_{i+1} > n_i$ for all $i\geq 1$, and $\lim_{i\rightarrow\infty} n_i = \frac{\alpha-1}{\alpha}$.
    % Moreover, on the subdomain $\Xi_i = \left\{x: \left(C\epsilon\right)^{n_{i+1}} \leq \dist(x, \optimalset{}) \leq \left(C\epsilon\right)^{n_i} \right\}_{i=1}^{\infty}$, we have \cref{eqn_requirement_Lyapunov} hold with $\sigma_i = \frac{\errorboundconstant^2}{8\epsilon^2} (C\epsilon )^{m_in_i + 1}$.\\
    Choose $U = \optimalset{\sqrt{C\epsilon}}$. \Cref{eqn_requirement_Lyapunov} holds with
    \begin{equation} \label{eqn_estimation_sigma_b}
    \setlength{\abovedisplayskip}{5pt}
            \setlength{\belowdisplayskip}{5pt}
        \sigma = \min \bigg\{  \frac{\errorboundconstant^2R_0^{2}}{128} \frac{1}{\epsilon^2},\  \frac{d\mu^{-}}{2\epsilon}\bigg\}, \text{ and } b = \sigma + \frac{M_\Delta}{2\epsilon}.
    \end{equation}
    
    % assumption
    % definition of \sigma_i, \Xi_i, n_i
    % limit of n_i
    % value of b
\end{my_lemma}

% Plugging in the estimation in \Cref{lemma_Lyapunov_alpha_PL} to \Cref{Thm: lyapunov method}, we have the following corollary.
% \begin{corollary} \label{corollary_reduce_PI_gibbs_to_PI_truncated_gibbs}
%     Suppose that the assumptions in \Cref{lemma_Lyapunov_alpha_PL} hold and further suppose that $\epsilon$ is sufficiently small so that $\sigma = \frac{\errorboundconstant^2}{8\epsilon^2} \bigg( C\epsilon\bigg)^{\frac{2}{\alpha}}$ in \cref{eqn_estimation_sigma_b}.
%     For any subdomain $U$ such that $\optimalset{(C\epsilon)^{\frac{\alpha-1}{\alpha}}} \subseteq U \subseteq \optimalset{c(C\epsilon)^{\frac{\alpha-1}{\alpha}}}$, for some constant $c>1$, we have
%     \begin{equation}
%         \PIconstant_\GibbsMeasure \geq \frac{\frac{\errorboundconstant^2}{8\epsilon^2} \bigg( C\epsilon\bigg)^{\frac{2}{\alpha}}}{\frac{\errorboundconstant^2}{4\epsilon^2} \bigg( C\epsilon\bigg)^{\frac{2}{\alpha}}\cdot c^{\frac{2-\alpha}{\alpha-1}} + \PIconstant_{\epsilon, U}} \PIconstant_{\epsilon, U},
%     \end{equation}
%     where we recall $\PIconstant_\GibbsMeasure$ denotes the \Poincare\ constant for the Gibbs measure $\GibbsMeasure$ while $\PIconstant_{\epsilon, U}$ denotes the one for the truncated Gibbs measure $\TruncatedGibbsMeasure$.
% \end{corollary}
% \red{Consequently, if $\PIconstant_{\epsilon, U}$ is bounded from above, $\PIconstant_\GibbsMeasure \geq \frac{\lim_{\epsilon\rightarrow 0} \PIconstant_{\epsilon, U}}{c^{\frac{2-\alpha}{\alpha-1}}}$ as $\epsilon \rightarrow 0$.}

\subsection{Perturbation near the optimal set} \label{section_perturbation_near_optimal_set}
Applying the estimates from \Cref{lemma_Lyapunov_alpha_PL} to \Cref{Thm: lyapunov method}, we have reduced the estimation for the \Poincare\ constant of the Gibbs measure $\GibbsMeasure$ to the estimation for the one of the truncated Gibbs measure $\TruncatedGibbsMeasure$.
Unfortunately, the latter remains elusive.
In this section, we further reduce the estimation of $\PIconstant_{\epsilon, U}$ to the estimation of the Neumman eigenvalue of the Laplacian operator on $U$.

Since all the points in the subdomain $U$ are sufficiently close to the optimal set $\optimalset{}$, one can utilize the Taylor expansion of the potential $V$ to show that the density function of the truncated Gibbs measure $\TruncatedGibbsMeasure$ is an $\epsilon$-perturbation of the uniform density function on $U$. We can utilize the perturbation principle in \Cref{theorem_perturbation_principle} and the smoothnesss of $V$ (see the first point in \Cref{lemma_useful_constants}) to derive the following result.

\begin{my_lemma} \label{lemma_perturbation_of_truncated_Gibbs}
% [Perturbation of truncated Gibbs measure by an $\epsilon$-modification]
Suppose that the assumptions and requirements in \Cref{lemma_Lyapunov_alpha_PL} hold.
% Suppose that \Cref{ass_growth_V,ass_optimalset_interior,ass_PL,ass_regularity_V} hold.
% Suppose the temperature $\epsilon$ is sufficiently small such that
        % $\epsilon \leq \frac{\errorboundconstant^2}{64C_g} \text{ and } \left(C\epsilon\right)^{\frac{\alpha-1}{2}} \leq \delta_V$.
% For any subdomain $U\subseteq \optimalset{c(C\epsilon)^{\frac{\alpha-1}{\alpha}}}$ for some constant $c>1$,
If the uniform measure $\LebesgueMeasure{U}$ satisfies PI($\rho_{U}$), then the truncated Gibbs measure $\TruncatedGibbsMeasure$ also satisfies PI($\rho_{\epsilon, U}$) with
\begin{equation*}
\setlength{\abovedisplayskip}{5pt}
            \setlength{\belowdisplayskip}{5pt}
\exp\{\bar C\}\rho_{\epsilon, U} \geq  \rho_{U} = \NeummanEigenvalue(U),
\end{equation*}
where $\bar C = 4L C$ and $\NeummanEigenvalue(U)$ is the Neumann eigenvalue on $U$.
\end{my_lemma}
Combining \Cref{lemma_Lyapunov_alpha_PL,lemma_perturbation_of_truncated_Gibbs}, we obtain the main conclusion of this section.

\begin{theorem}\label{corollary_reduce_PI_to_Neumann_eigenvalue}
    Suppose that the requirements in \Cref{lemma_Lyapunov_alpha_PL} hold and further suppose that $\epsilon$ is sufficiently small so that $\sigma = \frac{d\mu^{-}}{2\epsilon}$ in \cref{eqn_estimation_sigma_b}.
    By choosing $U = \optimalset{\sqrt{C\epsilon}}$, we have 
        $\PIconstant_\GibbsMeasure \geq \frac{1}{2}\frac{d\mu^{-}}{d\mu^{-}+M_\Delta}\exp(-\bar C) \NeummanEigenvalue(U)$.
    % where we recall $\PIconstant_\GibbsMeasure$ denotes the \Poincare\ constant for the Gibbs measure $\GibbsMeasure$.
    % while $\PIconstant_{U}$ denotes the one for the Lebesgue measure on $U$.
\end{theorem}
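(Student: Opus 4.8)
The plan is to simply chain together the two reductions already established in this section. First, I would invoke \Cref{lemma_Lyapunov_alpha_PL}: under the stated smallness condition on $\epsilon$, the Lyapunov condition \cref{eqn_requirement_Lyapunov} holds with $\mathcal{W}(x) = \exp(V/(2\epsilon))$, the set $U = \optimalset{\sqrt{C\epsilon}}$, and constants $\sigma = \min\{\frac{\errorboundconstant^2 R_0^2}{128\epsilon^2}, \frac{d\mu^{-}}{2\epsilon}\}$ and $b = \sigma + \frac{M_\Delta}{2\epsilon}$. The extra hypothesis of the corollary is precisely that $\epsilon$ is small enough that the minimum defining $\sigma$ is attained by the second term, i.e. $\sigma = \frac{d\mu^{-}}{2\epsilon}$; this happens once $\epsilon \leq \frac{\errorboundconstant^2 R_0^2}{64 d \mu^{-}}$, which I would note is compatible with the earlier constraints. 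Consequently $b = \frac{d\mu^{-}}{2\epsilon} + \frac{M_\Delta}{2\epsilon} = \frac{d\mu^{-} + M_\Delta}{2\epsilon}$.

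Next I would feed these into \Cref{Thm: lyapunov method}, which gives $\PIconstant_\GibbsMeasure \geq \frac{\sigma}{b + \PIconstant_{\epsilon,U}} \PIconstant_{\epsilon,U}$. To turn this into a clean bound I would use \Cref{lemma_perturbation_of_truncated_Gibbs}, which identifies $\PIconstant_{\epsilon,U} \geq \exp(-\bar C)\NeummanEigenvalue(U)$ with $\bar C = 4LC$, together with the monotonicity of $t \mapsto \frac{\sigma t}{b+t}$ in $t > 0$. Substituting the lower bound for $\PIconstant_{\epsilon,U}$ and then discarding the (nonnegative) $\PIconstant_{\epsilon,U}$ term in the denominator — i.e. using $b + \PIconstant_{\epsilon,U} \geq b + \exp(-\bar C)\NeummanEigenvalue(U)$, and then crudely $\geq b$ is too lossy, so instead I would observe that after dropping the term we still need the final constant; more carefully, since we only want a lower bound of the stated form, bound $b + \PIconstant_{\epsilon,U} \leq 2b$ would go the wrong way, so the right move is: replace $\PIconstant_{\epsilon,U}$ in the numerator by its lower bound, and in the denominator note $b + \PIconstant_{\epsilon,U} \geq b$ only if we are willing to accept that $\PIconstant_{\epsilon,U}$ could be large — but $\frac{\sigma t}{b+t} \geq \frac{\sigma t}{b + t}$ is increasing, so the worst case in the denominator is $\PIconstant_{\epsilon,U}$ small; hence I would instead upper bound $\PIconstant_{\epsilon,U}$ crudely. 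The cleanest route: since $\PIconstant_{\epsilon,U} = \NeummanEigenvalue(U)$ up to the Holley--Stroock factor and $\NeummanEigenvalue(U) \leq \NeummanEigenvalue(U)$, and in the regime of interest $\NeummanEigenvalue(U) = O(1) \ll b = \Theta(1/\epsilon)$, one has $b + \PIconstant_{\epsilon,U} \leq 2b$ for $\epsilon$ small, giving $\PIconstant_\GibbsMeasure \geq \frac{\sigma}{2b}\PIconstant_{\epsilon,U} \geq \frac{\sigma}{2b}\exp(-\bar C)\NeummanEigenvalue(U)$. Finally $\frac{\sigma}{2b} = \frac{d\mu^{-}/(2\epsilon)}{2(d\mu^{-}+M_\Delta)/(2\epsilon)} = \frac{d\mu^{-}}{2(d\mu^{-}+M_\Delta)}$, which yields exactly $\PIconstant_\GibbsMeasure \geq \frac{1}{2}\frac{d\mu^{-}}{d\mu^{-}+M_\Delta}\exp(-\bar C)\NeummanEigenvalue(U)$.

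The only genuine subtlety — and the step I would treat most carefully — is the bookkeeping needed to pass from $\frac{\sigma}{b+\PIconstant_{\epsilon,U}}\PIconstant_{\epsilon,U}$ to $\frac{\sigma}{2b}\PIconstant_{\epsilon,U}$: this requires knowing $\PIconstant_{\epsilon,U} \leq b$, i.e. $\exp(-\bar C)\NeummanEigenvalue(U) \leq \NeummanEigenvalue(U) \leq \frac{d\mu^{-}+M_\Delta}{2\epsilon}$, which holds for all sufficiently small $\epsilon$ since $\NeummanEigenvalue(U)$ stays bounded as $\epsilon \to 0$ (indeed, by the Step 2 analysis it converges to $\Eigenvalue(\optimalset{})/$constant, in particular it is $O(1)$). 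I would fold this into the ``$\epsilon$ sufficiently small'' hypothesis, noting it costs nothing beyond the constraints already imposed. Everything else is direct substitution; there is no real obstacle here, as the heavy lifting was done in \Cref{lemma_Lyapunov_alpha_PL} and \Cref{lemma_perturbation_of_truncated_Gibbs}.
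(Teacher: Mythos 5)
Your proposal is correct and is essentially the paper's own argument: substitute $\sigma=\frac{d\mu^{-}}{2\epsilon}$ and $b=\frac{d\mu^{-}+M_\Delta}{2\epsilon}$ from \Cref{lemma_Lyapunov_alpha_PL} together with $\rho_{\epsilon,U}\geq \exp(-\bar C)\NeummanEigenvalue(U)$ from \Cref{lemma_perturbation_of_truncated_Gibbs} into \Cref{Thm: lyapunov method}, and use $\rho_{\epsilon,U}\leq b$ (valid for small $\epsilon$ since $\NeummanEigenvalue(U)=\mathcal{O}(1)$ while $b=\Theta(1/\epsilon)$) to obtain the factor $\frac{\sigma}{2b}=\frac{1}{2}\frac{d\mu^{-}}{d\mu^{-}+M_\Delta}$. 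Despite the wavering in your middle paragraph (upper-bounding the denominator by $2b$ is in fact exactly the right move, not the wrong direction), your final chain of inequalities is correct, and your explicit treatment of the requirement $\rho_{\epsilon,U}\leq b$ as an additional ``$\epsilon$ sufficiently small'' condition is a point the paper leaves implicit.
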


% \red{Connect PI of Lebesgue measure with Neumman eigenvalue. 
% Want to use Neumman eigenvalue on $\optimalset{}$ to bound.
% Turn into a stability analysis problem.
% Highlight the difficulty: Lipschitz boundary.
% }

% \clearpage
\section{Step 2: Stability Analysis of the Neumman Eigenvalue} \label{section_stability_neumann_eigenvalue}
In the previous section, we reduce the estimation of the \Poincare\ constant of $\GibbsMeasure$ to $\NeummanEigenvalue(U)$, the Neumann eigenvalue of the Laplacian operator on the subdomain $U$.
In this section, we justify the choice of $U = \optimalset{\sqrt{C\epsilon}}$. Recall that $\optimalset{}$ is a $\mathcal{C}^2$ embedding submanifold. Hence, $U$ matches the tubular neighborhood of $\optimalset{}$, a special kind of  \emph{neighborhood in the ambient space}, as described in \Cref{Thm:tubular neighborhood}. For the simplicity of notation, we denote $\tilde \epsilon = \sqrt{C\epsilon}$ in this section.
% We take $U = T(\tilde \epsilon)$ with $\tilde \epsilon = \sqrt{C\epsilon}$.
% Moreover, the tubular neighborhood coincides with the expansion of $\optimalset{}$, i.e. $T(\tilde \epsilon) = \optimalset{}^{\tilde \epsilon}$.

\subsection{Tubular Neighborhood of a $\mathcal{C}^2$ Embedding Submanifold}
As an important property of embedding submanifold in $\R^d$, let us introduce the tubular neighborhood theorem (see \cite{DiffTopo1974}, Page 69). 
\begin{theorem}[Tubular neighborhood theorem]\label{Thm:tubular neighborhood}
Let $M$ be a compact $\mathcal{C}^2$ embedding submanifold in $\R^d$. 
Let $T(\tilde \epsilon) = M^{\tilde \epsilon} = \{ y \in \R^d : |y - m| \leq \tilde \epsilon, m \in M\} $ be the tubular neighborhood of $M$.
% , defined as 
% \begin{equation*}
%     T(\tilde \epsilon) = \{ y \in \R^d : |y - m| \leq \tilde \epsilon, m \in M\}.
% \end{equation*}
There exists a positive number $\tilde \epsilon_\mathrm{TN}$, such that for all $0 < \tilde \epsilon \leq \tilde \epsilon_\mathrm{TN}$, one has
% \begin{enumerate}
    (1) each $y \in T(\tilde \epsilon)$ possesses a unique closest point $\pi(y) \in M$;
    (2) the projection map  $\pi: T(\tilde \epsilon) \rightarrow M$ is a submersion. That is to say, the linear map $D_y \pi: T_y T(\tilde \epsilon) \rightarrow T_{\pi(y)}M$ is surjective at each point $y \in T(\tilde \epsilon)$.
% \end{enumerate}
% Moreover, if $M$ is compact, then $\epsilon(x)$ can be taken to be a constant.
\end{theorem}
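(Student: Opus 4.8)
The plan is to construct the tubular neighborhood directly from the normal bundle of $M$ together with the inverse function theorem, and then upgrade the resulting local statement to a uniform one using the compactness of $M$. First I would form the normal bundle $NM = \{(m,v)\in M\times\R^d : v\perp T_mM\}$; since $M$ is a $\mathcal{C}^2$ submanifold the assignment $m\mapsto T_mM$ is $\mathcal{C}^1$, so $NM$ is a $\mathcal{C}^1$ manifold of dimension $d$ and the map $E:NM\to\R^d$, $E(m,v)=m+v$, is $\mathcal{C}^1$. The key computation is the derivative of $E$ along the zero section: at a point $(m,0)$ one has the canonical splitting $T_{(m,0)}NM \cong T_mM \oplus N_mM$, and $DE_{(m,0)}$ acts as the inclusion into $\R^d$ on each summand; since $\R^d = T_mM\oplus N_mM$ orthogonally, $DE_{(m,0)}$ is a linear isomorphism. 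By the inverse function theorem, $E$ restricts to a $\mathcal{C}^1$ diffeomorphism from some neighborhood of $(m,0)$ in $NM$ onto a neighborhood of $m$ in $\R^d$.

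Next I would globalize this. A standard compactness argument yields a uniform radius $r>0$ such that $E$ is injective on the open disk bundle $N_rM := \{(m,v)\in NM : |v|<r\}$: otherwise one extracts sequences $(m_n,v_n)\neq(m_n',v_n')$ with $E(m_n,v_n)=E(m_n',v_n')$ and $|v_n|,|v_n'|\to 0$; after passing to subsequences $m_n\to m$ and $m_n'\to m$ (the limits coincide since $m_n-m_n' = v_n'-v_n\to 0$), both pairs eventually lie in the neighborhood of $(m,0)$ on which $E$ is injective, a contradiction. An injective $\mathcal{C}^1$ local diffeomorphism is a diffeomorphism onto its image, so $E|_{N_rM}$ is a $\mathcal{C}^1$ diffeomorphism onto an open set $\Omega\supseteq M$ in $\R^d$.

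To conclude, fix any $\tilde\epsilon_{\mathrm{TN}} < r$ and take $\tilde\epsilon \leq \tilde\epsilon_{\mathrm{TN}}$. For $y$ with $\dist(y,M)\leq\tilde\epsilon$, compactness of $M$ gives a nearest point $m_0\in M$; the first-order optimality of $m\mapsto|y-m|^2$ forces $y-m_0\perp T_{m_0}M$, so $(m_0,y-m_0)\in N_rM$ and $y = E(m_0,y-m_0)\in\Omega$, which gives $T(\tilde\epsilon)\subseteq\Omega$. If $y$ had two nearest points $m_0,m_1$, then $(m_0,y-m_0)$ and $(m_1,y-m_1)$ would lie in $N_rM$ with the same image $y$ under the injective map $E$, whence $m_0=m_1$; this proves (1) and identifies $\pi(y)$ with the first coordinate of $(E|_{N_rM})^{-1}(y)$. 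Claim (2) is then immediate: $\pi = \mathrm{pr}_M\circ(E|_{N_rM})^{-1}$ is the composition of a diffeomorphism with the bundle projection $\mathrm{pr}_M:N_rM\to M$, which is a submersion, so every $D_y\pi$ is surjective.

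The step I expect to be the main obstacle is the passage from the local diffeomorphism property near the zero section to injectivity of $E$ on a disk bundle of fixed radius $r$: local injectivity does not a priori prevent distinct normal fibers from colliding, and it is the compactness/subsequence argument above that closes this gap. A secondary point to track is regularity bookkeeping — because $M$ is only $\mathcal{C}^2$, the normal exponential map $E$ is merely $\mathcal{C}^1$, so each appeal to the inverse function theorem must be made at the $\mathcal{C}^1$ level, which is precisely why the $\mathcal{C}^2$ hypothesis on $M$ is the natural one here.
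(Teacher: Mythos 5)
Your argument is correct and is essentially the standard proof of the result, which the paper itself does not prove but cites from \citep{DiffTopo1974}: form the normal bundle, show $E(m,v)=m+v$ is a $\mathcal{C}^1$ local diffeomorphism along the zero section, use compactness of $M$ to get a uniform injectivity radius, and identify the nearest-point map with $\mathrm{pr}_M\circ (E|_{N_rM})^{-1}$, which makes the submersion claim immediate; your handling of the uniform-radius step and of the $\mathcal{C}^2\Rightarrow\mathcal{C}^1$ regularity bookkeeping is exactly right. The only tacit assumption is that $M$ has no boundary (needed for the first-order optimality step $y-m_0\perp T_{m_0}M$), which is harmless here since the paper applies the theorem to $\optimalset{}$, shown boundaryless in \Cref{lemma_manifold_structure}.
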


% \begin{my_remark}
\noindent There is a more concrete representation for the tubular neighborhood, which we adopt in the rest of the paper: For any $y \in T(\tilde \epsilon)$, $y$ can be written as $y = m + \nu$, where $m$ is a point on $M$ and $\nu \perp M$ at $m$ with $|\nu| \leq \tilde \epsilon$, and the map $y \rightarrow (m, \nu)$ is a diffeomorphism. More precisely, under the local chart $(\Gamma, \phi)$, the diffeomorphism $y \rightarrow (m, \nu)$ can be written as
\begin{equation}\label{local coordinate decomposition}
\setlength{\abovedisplayskip}{5pt}
            \setlength{\belowdisplayskip}{5pt}
y(u, r) = \mathcal{M}(u) + \sum_{l=k+1}^dr^l \N_l(u),\ \ \ \ u \in \Gamma \subset \R^k, \ \ \ (r^{k+1},...,r^d) \in B(\tilde \epsilon) \subset \R^{d-k}.
\end{equation}
Here $\M(u)$ is local coordinate representation of the including map $i_M: M \rightarrow \R^d$ in \eqref{embedding structure}, $\N_{k+1},...,\N_d: \Gamma \rightarrow \R^d$ are $d-k$ normal vector fields on $M$ which are also orthogonal to each other,
and $B(\tilde \epsilon)$ denotes ball with radius $\tilde \epsilon$ in $\R^{d-k}$. We refer readers to \Cref{Rie structure on local chart} for more details about these vector fields on the local chart $(\Gamma, \phi)$. For brevity, we also denote $r = (r^{k+1},...,r^d)$ with $|r| \leq \tilde \epsilon$. 
% Readers can find more details about this version in \citep[Theorem 6.24]{Lee2020IntroductionTS}. \red{refer to appendix}
% \end{my_remark}

\subsection{Stability of the Neumann Eigenvalue on the Tubular Neighborhood}
We now focus on the stability of $\NeummanEigenvalue(T(\tilde \epsilon))$ w.r.t. $\tilde \epsilon$, where $T(\tilde \epsilon)$ is a tubular neighborhood of $\optimalset{}$.
Given the special structure of $T(\tilde \epsilon)$, our idea is to exploit the tensorization of the \Poincare\ inequality. 

\begin{my_proposition}[Proposition 4.3.1 in \citep{Bakry2013AnalysisAG}] 
\label{theorem_tensorization}
Let $(E_1, \mu_1)$ and $(E_2, \mu_2)$ be two probability spaces with measure $\mu_1$ and $\mu_2$, and they satisfy PI with constants $C_1$ and $C_2$ respectively. Then the product space $(E_1 \times E_2, \mu_1 \times \mu_2)$ satisfies a PI with constant $C = \max\{C_1, C_2\}$.
\end{my_proposition}

To utilize the above theorem, recall the min-max variational principle of PI in \Cref{section_eigenvalue}, which consists of the $L^2$ norm and the $\sobolevspace{1,2}$ norm on $T(\tilde \epsilon)$.
To bound these integrals, we show that the uniform measure on $T(\tilde \epsilon)$ can be decomposed as the product of a pair of decoupled measures on the manifold $\optimalset{}$ and the subspace of the normal coordinates $r$, up to an $\mathcal{O}(\tilde \epsilon)$ perturbation:

% \red{The stability of the spectral is determined only by the normal direction; We actually exploit the spectral stability along $d-k$ normal directions of $\optimalset{}$.} 

\begin{itemize}[leftmargin=*]
    \setlength\itemsep{0em}
    \item We decompose the integral in $T(\tilde \epsilon)$ as the integral in the product space $\optimalset{} \times B(\tilde \epsilon)$ with an additional factor of order $1 + \mathcal{O}(\tilde \epsilon)$. This is possible since for any $x \in \optimalset{}$, an $\epsilon_1$-neighborhood of $x$ under the topology of $\R^d$ can be viewed, up to a diffeomorphism, as the product space $B^{\optimalset{}}(x, \epsilon_2) \times B(\epsilon_3)$. Here $B^{\optimalset{}}(x, \epsilon_2) \subset {\optimalset{}}$ is a ball in $\optimalset{}$ with center $x$ and radius $\epsilon_2$ (defined according to the geodesic distance on $\optimalset{}$) and $B(\epsilon_3)$ is a ball in $d - k$ normal directions of $\optimalset{}$ at $x$ with radius $\epsilon_3$.
    Equivalently, we turn the uniform measure on $T(\epsilon)$ to the product of the volume measure induced by the including map $i_\optimalset{}$ on $\optimalset{}$ and the uniform measure on $B(\tilde \epsilon)$.

    % For any given $x \in \optimalset{}$, the $\epsilon$ neighborhood of $x$ in tubular neighborhood can be viewed, up to a diffeomorphism, as the product space $B^{\optimalset{}}(x, \epsilon') \times B(\epsilon)$. Here $B^{\optimalset{}}(x, \epsilon') \subset {\optimalset{}}$ is a ball in $\optimalset{}$ with center $x$ and radius $\epsilon'$, and $B(\epsilon)$ is a ball in $d - k$ normal directions of $\optimalset{}$ with center $x$ and radius $\epsilon$.
    
    \item With the above decomposition, we show that both $L^2$ and $\sobolevspace{1,2}$ norm on $T(\tilde \epsilon)$ are bounded by their counterparts on the the product space $\optimalset{} \times B(\epsilon)$ with an $\tilde \epsilon$ perturbation.

    % Using the boundedness condition of the second fundamental form of $\optimalset{}$, we can prove that the $L^2$ norm and the $\sobolevspace{1,2}$ norm in  $T(\epsilon)$ are equivalent with  the $L^2$ norm and the $\sobolevspace{1,2}$ norm in the product space $\optimalset{} \times B(\epsilon)$ with small $\epsilon$ perturbation.  \red{explain equivalent: mutually control?}

    % \item By exploiting its min-max variational formulation, we lower bound $\NeummanEigenvalue(U)$ by the Neumann eigenvalue of the Laplacian operator on $\optimalset{} \times B(\epsilon)$ up to an $\mathcal{O}(\epsilon)$ perturbation. We can easily compute the latter by the tensorization property of Poincar\'e inequality. 

\end{itemize}
\begin{figure}
    \captionsetup{format=plain}
    \centering
	\begin{tabular}{c @{\quad } c @{} c}
		\includegraphics[height=.2\columnwidth]{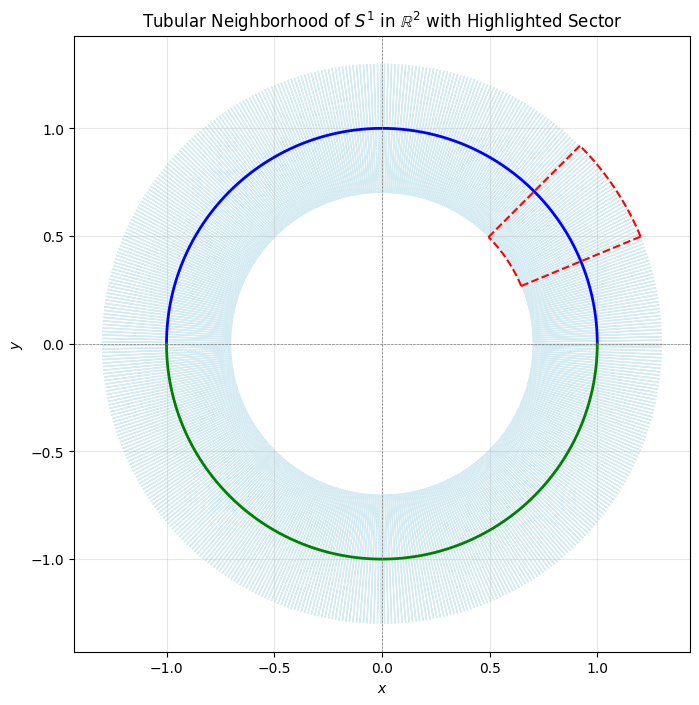} &
		\includegraphics[height=.2\columnwidth]{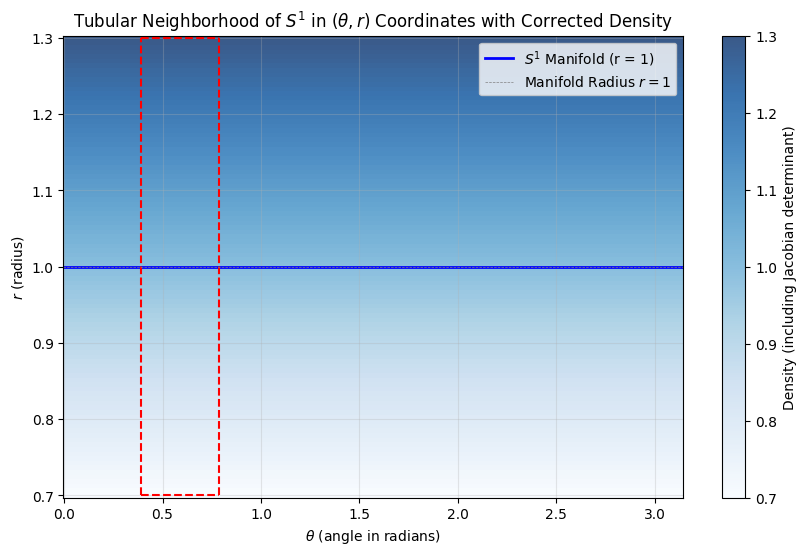} &
        \includegraphics[height=.2\columnwidth]{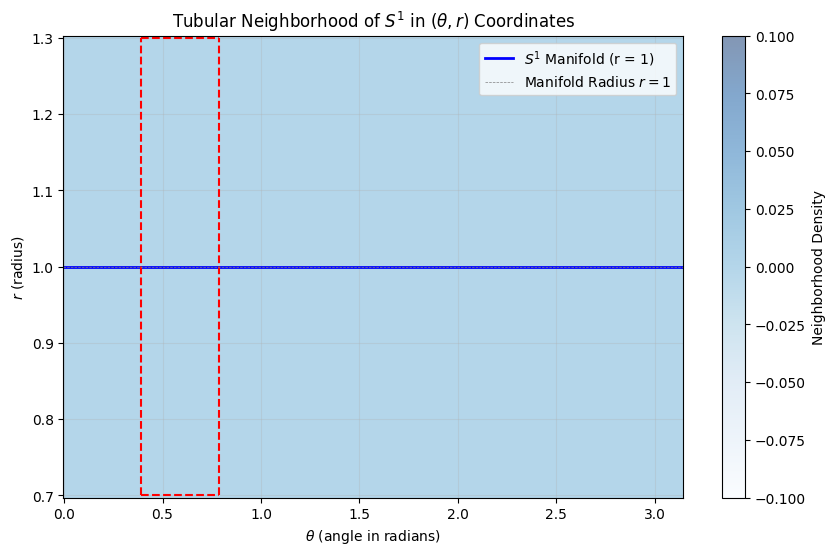} \\
        (a) & (b) & (c)
	\end{tabular}
    \caption{
    The circle in (a) can be represented using two local charts (blue and green). 
    Using the tubular neighborhood theorem, in a local region of $U$ (outlined with the red dashed line)
    we transform the uniform measure to a pair of decoupled measures on the tangent and normal directions. (a) Uniform measure $\mu_U$ (over $(x, y)$) under the Cartisian coordinate $(x, y)$; (b) Uniform measure $\mu_U$ (over $(x, y)$) under the local coordinate $(\theta, r)$; (c) Uniform measure (over $(\theta, r)$) under the local coordinate $(\theta, r)$. Importantly, when the radius of the tubular neighborhood is small, the densities in (b) and (c) point-wisely control each other.
    % The above derivation already allows us to exploit the min-max formulation of the Laplacian-Beltrami eigenvalue and the tensorization of the \Poincare\ inequality to conclude the target result. 
    % Note that the above transformation only holds locally. To reach the global conclusion, we will use the standard technique of partition of unity \citep[Chapter 13]{tu2010introduction}.
    }
\end{figure}

{With the above derivation, we turn our focus to the PIs of the two decoupled measures. The PI constant of the volume measure on $\optimalset{}$ is inherent to $\optimalset{}$ and is \emph{temperature-independent}. The PI constant of the measure on the normal coordinates has been explicitly calculated in the literature.
}
\begin{my_proposition}(PI for Lebesgue measure on a ball, \cite[Page 293, Theorem 2]{evans10}) 
Let $B(\tilde \epsilon) \subseteq \R^{d-k}$ be a ball with radius $\tilde \epsilon$.
Let $\mu_{B(\tilde \epsilon)}$ be the uniform measure over $B(\tilde \epsilon)$. There exists a constant $\tilde C$ depending only on the dimension $(d-k)$ such that the PI constant $\PIconstant_{\mu_{B(\tilde \epsilon)}} \geq \frac{1}{\tilde C\tilde \epsilon}$.

% Assume $1 \leq p \leq \infty$. Then there exists a constant $C$, depending only on $d$ and $p$, such that
% \begin{equation*}
% \left\{\int_{B(\tilde \epsilon)} \left(f - \int_{B(\tilde \epsilon)} f \ud x \right)^p \ud x \right\}^{\frac{1}{p}} \leq C\tilde \epsilon \left\{ \int_{{B(\tilde \epsilon)}} |\nabla f|^p \ud x \right\}^{\frac{1}{p}},
% \end{equation*}
% for each ball $B(\tilde \epsilon) \subset \R^d$ and each function $f \in W^{1,p}(B(\tilde \epsilon))$.
\end{my_proposition}

Combining with \Cref{theorem_tensorization}, we know $\NeummanEigenvalue(B(\tilde \epsilon)$ is dominated by $\Eigenvalue(\optimalset{})$ when $\tilde \epsilon$ is sufficiently small, and hence, $\NeummanEigenvalue(T(\tilde \epsilon)$ is determined by $\Eigenvalue(\optimalset{})$.
We now make the above reasoning rigorous.

% A key step to make the above reasoning rigorous is the following change of variables formula.
\begin{my_lemma}\label{lemma_wely's formula}\citep{Weyl1939OnTV} Let $\varphi: T(\tilde \epsilon) \rightarrow \R$ be an integrable function, then we have 
\begin{equation*}
\setlength{\abovedisplayskip}{5pt}
            \setlength{\belowdisplayskip}{5pt}
\int_{T(\tilde \epsilon)} \varphi(y) dy = \int_{\optimalset{}} \Big\{ \int_{B(\epsilon)} \varphi(y(u,r)) \Big| \text{det} \big(I_k + \sum_{l = k+1}^d r^l \tilde{G}(l) \big) \Big| dr^{k+1}...dr^d \Big\} d\M,
\end{equation*}
where the variable $y$ on the right hand side uses the expression of the local coordinate \eqref{local coordinate decomposition}, $I_k$ is $k \times k$ identity matrix and 
$\tilde{G}(l), l = k+1,...,d$ are defined in the end of \Cref{subsection: submanifold}.
% $\tilde{G}(l)$ is defined in \eqref{matrix tilde G} in the appendix.
\end{my_lemma}

To relate $\NeummanEigenvalue(U)$ to $\Eigenvalue(\optimalset{})$ through their min-max variational principles, we need the following expression of the gradient under change of variables.
\begin{my_lemma}\label{lemma_transformation}
Let $\varphi(y) \in W^{1,2}(T(\tilde \epsilon))$, then, on each local chart $(\Gamma, \phi)$, we have in the weak sense
\begin{equation*}
\setlength{\abovedisplayskip}{5pt}
            \setlength{\belowdisplayskip}{5pt}
\nabla_{(u,r)} \varphi(y(u,r)) = \nabla_y \varphi \cdot \left( [\frac{\partial \M}{\partial u^1},...,\frac{\partial \M}{\partial u^k}, \N_{k+1},...,\N_d] 
\left[ 
\begin{matrix} 
I_k + \sum_{l=k+1}^d r_l \tilde{G}(l) & 0 \\
0 & I_{d-k} 
\end{matrix}
\right] \right).
\end{equation*}

% where $\tilde{G}(l), l = k+1,...,d$ are defined in the end of \Cref{subsection: submanifold} and $I_k$ is $k \times k$ identity matrix. 
\end{my_lemma}

We are now ready to state the estimation of $\NeummanEigenvalue(U)$. 
Note that the choice of $U = T(\epsilon)$ can also be regarded as a domain expansion, so the following result is a stability analysis.
However, one should keep in mind that the domain expansion is performed under the topology of the ambient space $\R^d$, so $U$ and $\optimalset{}$ do not have the same dimension.

\begin{my_proposition}\label{proposition_stability}
Suppose that \Cref{ass_growth_V,ass_no_saddle,ass_optimalset_interior,ass_PL} hold, then we have the non-asymptotic estimates of $\NeummanEigenvalue(T(\tilde \epsilon))$ based on $\Eigenvalue(\optimalset{})$ 
\begin{equation*}
\setlength{\abovedisplayskip}{5pt}
            \setlength{\belowdisplayskip}{5pt}
\Eigenvalue(\optimalset{})(1 - B \tilde \epsilon) \leq \NeummanEigenvalue(T(\tilde \epsilon)) \leq \Eigenvalue(\optimalset{})(1 + B \tilde \epsilon),
\end{equation*}
for some constant $B = B(d, k, \tilde{G}(l))>0$ when $\tilde \epsilon$ is small enough. 
% Here $k$ denotes the dimension of $\optimalset{}$ and $\tilde{G}(l)$ denotes the second fundamental form of $\optimalset{}$ defined in \Cref{Rie structure on local chart}.
\end{my_proposition}

\section{\Poincare\ Inequality for the \logPLmeasure\ Measure}\label{section of main result}
We now combine the results in \Cref{section_reduction_to_neumann_eigenvalue,section_stability_neumann_eigenvalue} to conclude the \Poincare\ inequality for the Gibbs measure $\GibbsMeasure$. Please find the proof in \Cref{proof_thm_main}.
\begin{theorem}\label{thm_main}
    Suppose that the requirements in \Cref{lemma_Lyapunov_alpha_PL} hold.
    % We have the following conclusions for the two cases in \Cref{ass_optimalset_interior}.
    % \begin{enumerate}
    % \item[$(\CIRCLE)$] (Lipschitz Domain of $\R^d$) Pick $c$ and $A$ according to \Cref{corollary_neumann_eigenvalue_U} and suppose that the temperature $\epsilon$ in addition satisfies $\epsilon \leq \min \big\{ \frac{\delta}{4}, \frac{1}{C} \Big( \frac{1}{2Ac}\Big)^{\frac{\alpha}{\alpha - 1}} \big\}$, where $\delta$ is defined in \Cref{definition_lipschitz_boundary}. 
    % Recall that $\NeummanEigenvalue(\optimalset{})$ denotes the Neumann eigenvalue on $\optimalset{}$ define in \Cref{definition_Neumann_eigenvalue}.
    % We have
    % % For the case $(\CIRCLE)$ in which $\optimalset{}$ is a Lipschitz Domain of $\R^d$, we have 
    % \begin{equation*}
    %     \PIconstant_\GibbsMeasure \geq C_{P} \NeummanEigenvalue(\optimalsetinterior{}).
    % \end{equation*}
    % \item[$(\Circle)$] (Embedding submanifold of $\R^d$) 
    Suppose that the temperature $\epsilon$ in addition satisfies $\epsilon \leq \min \big\{\tilde \epsilon_\mathrm{TN}, \frac{1}{C} \Big( \frac{1}{2B}\Big)^{2} \big\}$, where $\tilde \epsilon_\mathrm{TN}$ and $B$ appear in \Cref{Thm:tubular neighborhood} and \Cref{proposition_stability} respectively.
    Recall that $\Eigenvalue(\optimalset{})$ denotes the eigenvalue of the Laplacian-Beltrami operator  (\Cref{def_eigenvalue_laplacian_beltrami}).
    Define a constant $C_P = \frac{1}{4}\frac{d\mu^{-}}{d\mu^{-}+M_\Delta}\exp(-4LC)$.
    We have 
    % \begin{equation*}
        \(\PIconstant_\GibbsMeasure \geq C_{P} \Eigenvalue(\optimalsetinterior{})\).
    % \end{equation*}
    % \end{enumerate}
    % Here $C_p = \frac{1}{4}\frac{d\mu^{-}}{d\mu^{-}+M_\Delta}\exp(-\bar C)$ is a temperature-independent constant.
\end{theorem}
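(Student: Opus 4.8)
The plan is to simply compose the two main reductions already in hand: \Cref{corollary_reduce_PI_to_Neumann_eigenvalue}, which lower bounds $\PIconstant_\GibbsMeasure$ by (a temperature-independent multiple of) the Neumann eigenvalue $\NeummanEigenvalue(U)$ on the domain $U=\optimalset{\sqrt{C\epsilon}}$, and \Cref{proposition_stability}, which pins $\NeummanEigenvalue$ of a tubular neighborhood against the Laplacian--Beltrami eigenvalue $\Eigenvalue(\optimalset{})$. The key observation linking them is that, with $\tilde\epsilon := \sqrt{C\epsilon}$, the domain chosen in Step~1 is exactly a tubular neighborhood: $U=\optimalset{\sqrt{C\epsilon}}=T(\tilde\epsilon)$. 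The hypothesis $\epsilon\le\tilde\epsilon_\mathrm{TN}$ then guarantees $\tilde\epsilon$ is small enough for \Cref{Thm:tubular neighborhood} and \Cref{proposition_stability} to apply, and $\epsilon\le\frac1C\bigl(\frac1{2B}\bigr)^2$ gives $B\tilde\epsilon=B\sqrt{C\epsilon}\le\frac12$.

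First I would invoke \Cref{corollary_reduce_PI_to_Neumann_eigenvalue}: under the requirements of \Cref{lemma_Lyapunov_alpha_PL} and with $\epsilon$ small enough that $\sigma=\frac{d\mu^{-}}{2\epsilon}$, it yields
\[
\PIconstant_\GibbsMeasure \;\ge\; \tfrac12\,\frac{d\mu^{-}}{d\mu^{-}+M_\Delta}\,\exp(-\bar C)\,\NeummanEigenvalue(U),\qquad \bar C=4LC .
\]
Next I would apply the lower bound of \Cref{proposition_stability} with $\tilde\epsilon=\sqrt{C\epsilon}$ and then use $B\tilde\epsilon\le\frac12$:
\[
\NeummanEigenvalue(U)=\NeummanEigenvalue(T(\tilde\epsilon))\;\ge\;\Eigenvalue(\optimalset{})\,(1-B\tilde\epsilon)\;\ge\;\tfrac12\,\Eigenvalue(\optimalset{}) .
\]
Chaining the two displays gives
\[
\PIconstant_\GibbsMeasure\;\ge\;\tfrac14\,\frac{d\mu^{-}}{d\mu^{-}+M_\Delta}\,\exp(-4LC)\,\Eigenvalue(\optimalset{})\;=\;C_P\,\Eigenvalue(\optimalsetinterior{}),
\]
which is exactly the asserted bound.

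I do not expect a genuine obstacle here — the content of the theorem lives in \Cref{corollary_reduce_PI_to_Neumann_eigenvalue} and \Cref{proposition_stability}. The only point that needs care is bookkeeping of the temperature thresholds. Specifically, \Cref{corollary_reduce_PI_to_Neumann_eigenvalue} requires $\epsilon$ small enough that $\sigma$ equals $\frac{d\mu^{-}}{2\epsilon}$ rather than the $O(\epsilon^{-2})$ branch in \cref{eqn_estimation_sigma_b}; by the form of $\sigma$ this holds once $\epsilon\le \errorboundconstant^2R_0^2/(64\,d\mu^{-})$, which should be read as part of the stated requirements (or, if not, simply appended — it only shrinks the admissible low-temperature window by a constant factor). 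One must likewise note that the single condition $\epsilon\le\min\{\tilde\epsilon_\mathrm{TN},\tfrac1C(\tfrac1{2B})^2\}$ simultaneously controls $\tilde\epsilon=\sqrt{C\epsilon}$ below $\tilde\epsilon_\mathrm{TN}$ (for the tubular-neighborhood machinery and \Cref{proposition_stability}) and below $\tfrac1{2B}$ (for the factor-$\tfrac12$ slack). Once these thresholds are verified, the proof is the three-line chain above.
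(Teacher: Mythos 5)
Your proposal is correct and follows essentially the same route as the paper's own proof: invoke \Cref{corollary_reduce_PI_to_Neumann_eigenvalue} for $U=\optimalset{\sqrt{C\epsilon}}$, then apply \Cref{proposition_stability} with $\tilde\epsilon=\sqrt{C\epsilon}$ and use $\epsilon\le\frac1C\bigl(\frac1{2B}\bigr)^2$ to absorb the $(1-B\tilde\epsilon)$ factor into a $\tfrac12$. Your bookkeeping is in fact slightly more careful than the paper's appendix, which drops the $\frac{d\mu^{-}}{d\mu^{-}+M_\Delta}$ factor when recalling the Step-1 bound, whereas you carry it through to match $C_P$ exactly.
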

By noting that $\Eigenvalue(\optimalsetinterior{})$ is an inherent property of the optimal set $\optimalset{}$ and hence the potential function $V$, we reach the target conclusion, i.e. $\PIconstant_\GibbsMeasure = \Omega(1)$.
With the connection between the convergence of Langevin dynamics and the \Poincare\ inequality, we have that for a \PLcirc\ potential, the Langevin dynamics converges at the rate $\tilde{\mathcal{O}}(\frac{1}{\epsilon})$.
The convergence of discrete-time algorithms (in the sense of R\'enyi-divergence) like LMC can be readily derived by combining our result with \citep[Theorem 7]{chewi2024analysis}.
To the best of our knowledge, this is the first work to study the PI constant for potentials with non-isolated minima in the low temperature region. Interestingly, for the non-log-concave $\logPLmeasure$ measures, we still yield an $\Omega(1)$ lower bound for the PI constant.

\paragraph{Conclusion and future work}
We study the \Poincare\ constant of the \logPLmeasure\ measures as a template to understand the convergence behavior of stochastic dynamics on potentials with non-isolated minima. We relate the corresponding \Poincare\ constant to the spectral property of the Laplacian-Beltrami operator on the optimal set $\optimalset{}$, and establish a \emph{temperature-independent} lower bound.
Our next steps are (1) improving PI to LSI, (2) relaxing the $\mathcal{C}^2$ manifold to the $\mathcal{C}^1$ case, and (3) studying the non-isotropic noise case.

\bibliography{sample}
\clearpage
\appendix
\section{Related Work} \label{section_related_work}
% \red{Informal statements of the results in \Cref{section_reduction_to_neumann_eigenvalue,section_stability_neumann_eigenvalue}.}
% We highlight again that \citet{AOP} assume the potential $V$ to be Morse, i.e. $\nabla^2 V(x)$ is invertible for all critical point $x$. This assumption implies that the minima of $V$ is collection of singletons.
% In contrast, we allow the Hessian $\nabla^2 V(x)$ to be degenerate on the optimal set $\optimalset{}$ and focus on the case where $\optimalset{}$ is non-singleton.
% \red{Stability of Neumann eigenvalue.}
% \clearpage
% \paragraph{Functional inequalities.} 
\Poincare\ inequality is a crucial topic in domains like probability, analysis, geometry and so on. We categorize the results according to the property of the energy.
\begin{itemize}
    \item When the potential function $V$ is strongly convex, the famous Bakry-Emery criterion ensures that the PI constant is of order $\Theta(\frac{1}{\epsilon})$ \citep{bakry2006diffusions}. See for more detailed discussion in the book \citep{Bakry2013AnalysisAG}.
    \item 
    When the potential function $V$ is convex, there two prominent strategies to study the PI constant: the Lyapunov function approach \citep{PI_Lyapunov} and the approach initiated by \citet{cheeger1970lower} which relates the PI constant to the isoperimetric constant of the target measure and the KLS conjecture \citep{Kannan1995IsoperimetricPF,lee2024eldan}.\\
    % the spectral methods \citep{wu2000uniformly}, etc.. We focus on the first two strategies.\\
    Following \citep{bakry2008rate}, \citet{PI_Lyapunov} reduce the \Poincare\ inequality on $\R^d$ into a small compact region $U$ if $V$ satisfies a Lyapunov condition. Convex functions that are exponentially integrable is proved to satisfy this condition and hence the corresponding log-concave measure satisfies \Poincare\ inequality. However, they do not consider the low temperature region and if we naively utilize the Holley-Stroock perturbation principle to derive the \Poincare\ inequality in the said compact region $U$, the resulting \Poincare\ constant on $\R^d$ is $\Omega(\exp(\frac{1}{\epsilon}))$.\\    
    % one of most important method to derive Poincare inequality is lyapunov method.  The paper \cite{Cattiaux2016HittingTF} also gives more general connections of Poincare constant, lyapunov condition of $V$ and ergodicity in probability theory.\\    
    Another important research line is Cheeger's inequality \citep{cheeger1970lower}, which relates the \Poincare\ constant on a compact set with the Cheeger constant, describing the geometrical property of the set.
    Later on, the KLS conjecture extends the Cheeger constant in Eucildean space with log-concave measure in \citep{Kannan1995IsoperimetricPF}. We recommend a good survey \citep{lee2018kannanlovaszsimonovitsconjecture} and reference therein for more precise introduction and recent progress. We mention a very important method to prove this conjecture --- stochastic localization, which starts from Eldan's work in \citep{Eldan2013ThinSI}, and then generalized by \citet{lee2024eldan} and \citet{chen2020AnAC}. We note that KLS conjecture pays more attention to the independence of dimension.
    However, it also helps to derive the relationship of temperature and \Poincare\ constant for log-concave measure.
    In fact, if we combine the Lyapunov function approach with the KLS conjecture, we can prove that for a log-concave measure, the \Poincare\ constant remains temperature-independent in the low temperature regime.
    % See also a on-going book for a very nice overview on sampling from Log-concave measures \citep{sinho2024logconcave}.
    \item The convergence behavior of the Langevin dynamics on a non-log-concave measure is also a crucial research problem in various domains. In particular, here we focus on the case where the potential function $V$ has at least two separated (local) minima. As mentioned in the introduction, in this case, there is a two time-scale phenomenon in the convergence behavior in the low temperature region.
    The exponential dependence on the inverse temperature in the slow scale is classically known as the Arrhenius law \citep{arrhenius1967reaction}, which can be proved for example by the Freidlin-Wentzell theory on large deviation \citep{freidlin2012random}.
    With additional assumptions that $V$ is a Morse function and its saddle points has exactly one negative eigenvalue, this subexponential factor in the convergence behavior is captured in \citep{eyring1935activated,kramers1940brownian} and rigorously proved by \citep{bovier2004metastability,gayrard2005metastability} through potential theory.
    \citet{AOP} study the same problem, but through the functional inequality perspective. Following a two-scale approach, \citet{AOP} split the variance (the term $\VAR_\mu(f)$ in \cref{eqn_poincare_inequality}) into local variances and coarse-grained variances. The estimations on both variances can be combined together to obtain the global \Poincare\ inequality.
\end{itemize}

\section{\Poincare\ constant for log-concave measure in the low temperature regime} \label{section_pi_log_concave}
{Suppose} the potential function $V$ is convex and WLOG $V^*=0$. Suppose that $V$ is exponentially integrable for $\epsilon=1$, i.e. $\int_{\R^d} \exp(-V(x))\ud x < \infty$. We prove that the \Poincare\ constant $\PIconstant_\GibbsMeasure$ is $\Omega(1)$. Our proof is a combination of the Lyapunov function approach \citep{PI_Lyapunov} and the KLS conjecture \citep{lee2024eldan}.

Recall point (2) in \citep[Lemma 2.2]{PI_Lyapunov} which states that under the integrablity assumption, one has
\begin{equation*}
    \exists \alpha > 0, R>0, \mathrm{s.t.} \forall |x| \geq R, V(x) - V(0) \geq \alpha |x|.
\end{equation*}
Moreover, let us choose a Lyapunov function as $W(x) = \exp(\tilde W(\gamma |x|))$ where $\gamma = \frac{\alpha}{3}$
\begin{equation*}
    \tilde W(z) =
    \begin{cases}
        z &\quad z \geq R\\
        -\frac{12}{R^2} z^3 + \frac{28}{R} z^2 - 19z + 4R &\quad R/2\leq z \leq R\\
        0  &\quad z \leq R/2
    \end{cases}.
\end{equation*}
We can compute for $|x|\geq R$
\begin{equation*}
    \epsilon^{-1}\generator W(x) =  \gamma \left(\frac{n-1}{|x|} + \gamma - \frac{x\cdot \nabla V(x)}{\epsilon|x|}\right)W(x).
\end{equation*}
Note that $W\in \mathcal{C}^2$ and $W(x)\geq 1$ for all $x\in\R^d$.
Recall \Cref{Thm: lyapunov method}.
Set $\theta = \frac{\alpha}{\epsilon} - \gamma - \frac{(d-1)}{R}$ and set
\begin{equation*}
    b = \theta + \sup_{\|x\|\leq R} \epsilon^{-1} \generator W(x).
\end{equation*}
With with parameters $\theta, b, U=B(0, R)$, $W$ is a valid Lyapunov function. Moreover, it can be easily checked that $b \leq \theta + \frac{C_1}{\epsilon} + C_2$ for some temperature-independent constants $C_1$ and $C_2$ since both $V$ and $W$ are $\mathcal{C}^2$ and $W$ is a constant for $\|x\|\leq R/2$.

Finally, the \Poincare\ constant for the truncated Gibbs measure can be bounded using the KLS conjecture:
According to \citep[Theorem 13]{lee2024eldan}, the Cheeger constant of a log-concave measure can be lower bounded by (a polynomial of) the spectral norm of its covariance matrix. Since the truncated Gibbs measure is supported on a compact set, its Cheeger constant is lower bounded by a temperature-independent constant. Using the Cheeger's inequality, we have the conclusion.

\section{Properties of \PLcirc\ functions and \logPLmeasure\ measures}
\subsection{\logPLmeasure\ measures has a single modal} \label{section_log_PL_uni_modal}

\begin{proof}[\Cref{lemma_unimodal}]
    Note that by \Cref{ass_coercivity}, $V$ is not a constant function.
    Recall \Cref{ass_PL}. We have that for any $\optimalset{}'$ and any $x\in \mathcal{N}(\optimalset{}')$, if $\nabla V(x) = 0$, $V(x) = \min_{x\in\mathcal{N}(\optimalset{}')} V(x)$, i.e. $x\in\mathcal{N}(\optimalset{}')$ is a local minimizer if it is critical point. 
    
    Let $X$ denote the set of local maxima of $V$, which implies that $\forall x'\in X, \nabla V(x') = 0$. 

    Some useful facts are listed as follows
    \begin{itemize}
        \item $X \cap \mathcal{N}(\optimalset{}') = \emptyset$  for any $\optimalset{}'$: Let $x' \in X \cap \mathcal{N}(\optimalset{}')$. One has $\nabla V(x') = 0$, i.e. $x'$ is a local minimizer in $\mathcal{N}(\optimalset{}')$. But this contradicts with the assumption that $x'$ is a local maximizer.
        \item $X$ is either empty or a collection of singletons: Since for any $x'\in X$ (if $X \neq \emptyset$), one has $\nabla V(x') = 0$. Therefore, by \Cref{ass_no_saddle}, $\nabla^2 V(x') \prec 0$. Hence every $x'$ is a strict local maximizer, i.e. $X$ is a collection of singletons.
        % \item There are finitely many distinctive local minima values.
        \item The set of all local minima of $V$ has at most a finite number of separated components: 
        We prove via contradiction.
        Suppose that the set of all local minima of $V$ has infinitely many separated components.
        From \Cref{ass_coercivity}, all local minima are contained within a compact set. Denote the collection of all separated components of local minima of $V$ as $\{\optimalset{}'_i\}$. For every $i$, pick a representative point $x_i \in \optimalset{}'_i$. Clearly all $x_i$ are contained in a compact set. From Bolzano–Weierstrass theorem, we have that $\{x_i\}$ (as an infinite sequence) has at least one accumulation point $x$. Since $V$ is $\mathcal{C}^1$, $\nabla V(x) = 0$, i.e. $x$ is also a critical point. 
        \begin{itemize}
            \item If $x\notin \mathcal{N}(\optimalset{})$ (recall the definition of $\mathcal{N}(\optimalset{})$ in \Cref{ass_no_saddle}), $x$ is a local maximum by \Cref{ass_no_saddle}, which contradicts with the fact that in every neighborhood of $x$ there is a local minimum (since $x$ is a accumulation point of $\{x_i\}$).
            \item If $x \in \mathcal{N}(\optimalset{})$, there exists some $\optimalset{}_{i'}$ such that $x \in \mathcal{N}(\optimalset{}_{i'})$. If $x \in \optimalset{}_{i'}$, it cannot be an accumulation point of $\{x_i\}$, as in $\mathcal{N}(\optimalset{}_{i'})$ there is only one representative point $x_{i'}$. If $x \in \mathcal{N}(\optimalset{}_{i'}) \backslash \optimalset{}_{i'}$, $\nabla V(x) \neq 0$, which again contradicts with the fact that $\nabla V(x) = 0$.
        \end{itemize}
    \end{itemize}
    We can now prove that \logPLmeasure is uni-modal via contradiction. Suppose that the local minima of $V$ has at least two separated components. Pick any two separated components $\optimalset{}'_1$ and $\optimalset{}'_2$ and let $x_1$ and $x_2$ be two points in these two components respectively. WLOG, assume $f(x_1) \geq f(x_2)$.
    % Now set $P = \mathcal{N}(\optimalset{}'_1) \cup \mathcal{N}(\optimalset{}'_2)$. Recall the following result from \citet{katriel1994mountain}.
    \begin{theorem}[\citet{katriel1994mountain}, Theorem 2.1]
    \label{thm:katriel}
        Let $f: \R^n \rightarrow \R$ be $\mathcal{C}^1$ and coercive. Let $x_1, x_2 \in \R^n$ and let $P \subset \R^n$ separate $x_1$ and $x_2$ (that is, $x_1$ and $x_2$ lie in different components of $\R^n\setminus P$ ), and:
    $$
    \max \left\{f\left(x_1\right), f\left(x_2\right)\right\}<\inf _{x \in P} f(x)=p
    $$
    Then there exists a point $x_3$ which is a critical point of $f$, with: $f\left(x_3\right)>\max \left\{f\left(x_1\right), f\left(x_2\right)\right\}$.
    Moreover, $x_3$ is either a local minimum or a global mountain passing point\footnote{A point $x$ is called a global mountain passing point of $f$ if for every neighborhood $\mathcal{N}(x)$, the set $\{y: f(y) < f(x)\}\cap\mathcal{N}(x) $ is disconnected.}.
    \end{theorem}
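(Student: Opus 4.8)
The plan is to run the classical minimax (mountain pass) argument, but adapted to finite dimensions with coercivity playing the role of the Palais--Smale condition, and then to upgrade the critical point produced to the sharp dichotomy by tracking how the connected components of the sublevel sets of $f$ merge. Recall that for open subsets of $\R^n$ connected components coincide with path components, so throughout I may pass freely between ``lie in the same component'' and ``are joined by a continuous (even polygonal) path.'' First I would introduce the \emph{separation level}
\[
 c^* \;=\; \sup\bigl\{\,a > \max\{f(x_1),f(x_2)\}\;:\; x_1,\,x_2 \text{ lie in different components of } \{f<a\}\,\bigr\}.
\]
Since $f\ge p$ on $P$, for every $a\le p$ the open set $\{f<a\}$ is disjoint from $P$, hence contained in $\R^n\setminus P$; because $P$ separates $x_1$ and $x_2$, the components of $\{f<a\}$ containing them are distinct. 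So the defining set contains the interval $(\max\{f(x_1),f(x_2)\},p]$ and $c^*\ge p$. Coercivity makes each $\{f<a\}$ bounded, and for $a$ large $\{f<a\}$ contains a ball big enough to hold a polygonal path joining $x_1$ and $x_2$, so $c^*<\infty$. In particular any critical point at level $c^*$ satisfies $f(x_3)=c^*\ge p>\max\{f(x_1),f(x_2)\}$, which already gives the quantitative part of the statement.

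Next I would show $c^*$ is a critical value. Suppose not. Then coercivity serves as a Palais--Smale surrogate: any sequence with $f(y_k)\to c^*$ and $\nabla f(y_k)\to 0$ stays in a compact sublevel set and subconverges to a critical point at level $c^*$; since there is none, there are $\alpha,\delta>0$ with $|\nabla f|\ge\alpha$ on the slab $\{\,|f-c^*|\le\delta\,\}$. A standard deformation along a locally Lipschitz pseudo-gradient vector field (needed only because $f$ is merely $\mathcal{C}^1$), cut off to vanish outside the slab, produces a continuous map $\eta(1,\cdot)$ that pushes $\{f<c^*+\delta\}$ into $\{f<c^*-\delta\}$ while fixing $x_1,x_2$ (whose values are $<c^*-\delta$); completeness of the flow is automatic since $f$ decreases along it and sublevel sets are compact. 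As $c^*+\delta>c^*$, there is a path joining $x_1,x_2$ inside $\{f<c^*+\delta\}$; its image under $\eta(1,\cdot)$ joins them inside $\{f<c^*-\delta\}$, contradicting the definition of $c^*$. Hence a critical point at level $c^*$ exists.

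For the dichotomy I would argue by contradiction, assuming no critical point at level $c^*$ is a local minimum or a global mountain-passing point. Let $D_1$ be the component of $\{f<c^*\}$ containing $x_1$. One checks $D_1$ is bounded, $x_2\notin\overline{D_1}$, and $f\ge c^*$ on $\partial D_1$; moreover $\min_{\partial D_1}f=c^*$, for otherwise $\partial D_1\subseteq\{f\ge c^*+\eta\}$ would make $D_1$ an entire component of $\{f<c^*+\eta\}$ missing $x_2$, again contradicting the definition of $c^*$. Thus $K:=\partial D_1\cap\{f=c^*\}$ is nonempty and compact, and a localized version of the deformation argument above forces $K$ to contain a critical point $x_3$ (otherwise the ``neck'' of $D_1$ at level $c^*$ could be pinched off, lowering the separation level). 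This $x_3$ is automatically not a local minimum, since every neighborhood meets $D_1\subseteq\{f<f(x_3)\}$; and by the contradiction hypothesis it is not a global mountain-passing point, so some ball $B$ around $x_3$ has $\{f<c^*\}\cap B$ connected, hence equal to $D_1\cap B$ — and similarly for every critical point in $K$. This says $\partial D_1$ carries no ``essential'' obstruction at level $c^*$, and a deformation lemma refined to be sensitive to the local connectivity of sublevel sets (not just to $|\nabla f|$) then lets one deform $\{f<c^*+\delta\}$ so that $x_1$ and $x_2$ become joined at some level $<c^*$ — the final contradiction.

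The whole difficulty sits in this last refinement. The ordinary mountain-pass theorem only outputs ``a critical point at the minimax level''; obtaining Katriel's sharper conclusion requires (i) pinning that point to the separating boundary $\partial D_1$, and (ii) a deformation lemma for merely $\mathcal{C}^1$ coercive functions — built from pseudo-gradient fields, with completeness supplied by coercivity — that can tell whether a critical point is \emph{inessential}, meaning the sublevel set stays locally connected through it, and that such inessential critical points do not block the deformation. Carrying out this refined deformation together with the bookkeeping of how the components of $\{f<a\}$ coalesce as $a$ crosses $c^*$ is the real work; Steps 1 and 2 are routine adaptations of classical critical point theory, and in particular the hypotheses used there (coercivity, $\mathcal{C}^1$, the strict inequality $\max\{f(x_1),f(x_2)\}<p$) are exactly what the proposal needs.
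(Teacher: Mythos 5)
The paper itself offers no proof of this statement: it is imported verbatim from Katriel (1994, Theorem~2.1) and used as a black box in the proof of the uni-modality proposition. So the only meaningful comparison is against what the quoted theorem actually asserts, and measured against that your proposal is incomplete. Your Steps 1--2 are essentially sound: the separation level $c^*$ is well defined, $c^*\ge p$ because $\{f<a\}\subseteq\R^n\setminus P$ for $a\le p$, coercivity gives $c^*<\infty$ and serves as a Palais--Smale surrogate in $\R^n$, and the standard pseudo-gradient deformation yields a critical point at level $c^*>\max\{f(x_1),f(x_2)\}$. But this only reproduces the classical finite-dimensional mountain-pass theorem.

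The entire added content of Katriel's result is the dichotomy --- the critical point can be taken to be either a local minimum or a (global) mountain passing point --- and that is precisely the step you do not carry out. Your final contradiction invokes ``a deformation lemma refined to be sensitive to the local connectivity of sublevel sets,'' which you neither state precisely nor prove, and you say yourself that carrying it out is ``the real work.'' This is not a routine refinement: an ordinary pseudo-gradient deformation only sees $|\nabla f|$ and cannot cross a level containing critical points, and when the critical set at level $c^*$ is a continuum of degenerate critical points (exactly the regime this paper cares about, since its minima are non-isolated) one cannot simply ``pinch off the neck'' of $D_1$; showing that the absence of local minima and of mountain passing points in $\partial D_1\cap\{f=c^*\}$ forces the components containing $x_1$ and $x_2$ to merge strictly below $c^*$ is the substance of Katriel's proof (and of the related Pucci--Serrin structure theory for critical sets at minimax levels). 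Likewise the intermediate claim that $\partial D_1\cap\{f=c^*\}$ must contain a critical point is asserted rather than proved. As written, the proposal establishes only the existence of a critical point above $\max\{f(x_1),f(x_2)\}$, not the dichotomy that the statement (and the paper's application of it, which needs to rule out the mountain-pass alternative via Assumption~\ref{ass_no_saddle}) actually requires.
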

    To apply the above theorem, take $P$ to be a subset of $\mathcal{N}(\optimalset{}'_1) \backslash \optimalset{}'_1$ such that $\forall y \in P$, $V(y) > V(x_1) \geq \max\{ V(x_1), V(x_2)\}$. This is always possible according to \Cref{ass_PL}.
    Consequently, according to the above theorem, we can find a critical point $x_3$ which is either a local minimum or a global mountain passing point. 
    Consider two cases:
    \begin{enumerate}
        \item $x_3$ is not a local minimum but a global mountain passing point. Since $x_3$ is a critical point, \Cref{ass_no_saddle} implies that $x_3$ is a strict local maximum. However, for $d\geq 2$, this is not possible, as in this case a strict local minimum point is not a global mountain passing point.
        \item $x_3$ is a local minimum. If this is the case, we now pick $x_1$ and $x_3$ and apply \Cref{thm:katriel}. It gives us a new local minimum $x_4$ (note that $x_4$ cannot be a global mountain passing point as discussed in the first case). Important, note that every time we have $V(x_{i+1}) > V(x_{i})$ and that every $x_i$ is a local minimum. We can only do this a finite number of times since the collection of local minima of $V$ has at most a finite number of separated components (i.e. there can only be a finite number of different values of $V$ on the collection of local minima of $V$). 
    \end{enumerate}
    Consequently, none of the above two cases is possible and the local minima of $V$ has only one connected component, i.e. $\GibbsMeasure$ is uni-modal.
\end{proof}
\subsection{\PLcirc\ functions admits an embedding submanifold as its global optimal set}
\begin{lemma}(Global embedding submanifold) Suppose that $V$ satisfies \Cref{ass_PL,ass_no_saddle,ass_coercivity}, then the optimal set $\optimalset{}$ is an embedding submanifold of $\R^d$. 
\end{lemma}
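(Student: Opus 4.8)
The plan is to deduce the claim by gluing the already-known \emph{local} manifold structure near each minimizer into a single \emph{global} embedded submanifold; the two ingredients that make the gluing go through are the connectedness of $\optimalset{}$ (from \Cref{lemma_unimodal}) and its compactness (from \Cref{ass_coercivity}). First I would record the basic topology of $\optimalset{}$: by \Cref{lemma_unimodal} the set of all local minima is connected and coincides with the global minimizer set, so (with $V^\ast=0$, WLOG) $\optimalset{}=\{x:V(x)=0\}=\arg\min_{\R^d}V$ is closed as the zero set of the continuous function $V$, and it is bounded by \Cref{ass_coercivity}; hence $\optimalset{}$ is \emph{compact} and \emph{connected}.

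Next I would invoke the local structure. Fix $x^\ast\in\optimalset{}$. By \Cref{ass_PL}, $x^\ast$ lies in a neighborhood $\mathcal{N}(\optimalset{}')$ on which $V\in\mathcal{C}^3$ and the local P\L\ inequality holds, so by \citep[Theorem 2.16]{rebjock2024fast} there is an open set $W_{x^\ast}\subseteq\R^d$ containing $x^\ast$ such that $\optimalset{}\cap W_{x^\ast}$ is a $\mathcal{C}^2$ embedded submanifold of $W_{x^\ast}$ without boundary, of some dimension $k(x^\ast)$. The $\mathcal{C}^2$ regularity of this local model uses the $\mathcal{C}^3$ regularity of $V$ (one differentiation is lost going from $V$ to the level-set description of the minimizer set), and the absence of boundary comes from the fact that, under local P\L, the Hessian $\nabla^2V(x^\ast)$ is nondegenerate in the normal directions so the local model is a full slice, not a half-slice. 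This produces, for every $x^\ast$, a $\mathcal{C}^2$ chart $(\Gamma_{x^\ast},\phi_{x^\ast})$ of $\optimalset{}$ around $x^\ast$ carrying the topology induced from $\R^d$, and on overlaps two such models are both slices of the same Euclidean space, so the transition maps are $\mathcal{C}^2$.

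Then I would upgrade to the global statement. The map $x^\ast\mapsto k(x^\ast)$ is locally constant on $\optimalset{}$, so by connectedness $k(x^\ast)\equiv k$ for a fixed $k$; thus $\{(\Gamma_{x^\ast},\phi_{x^\ast})\}_{x^\ast}$ is a $\mathcal{C}^2$ atlas making $\optimalset{}$ a $k$-dimensional $\mathcal{C}^2$ manifold, and the inclusion $i_{\optimalset{}}:\optimalset{}\hookrightarrow\R^d$ is $\mathcal{C}^2$ with $Di_{\optimalset{}}(x)$ of rank $k=\dim\optimalset{}$ at every $x$ — condition (1) of \Cref{def_submanifold}. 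For condition (2) I would show $i_{\optimalset{}}$ is a homeomorphism onto its image: this follows locally since each $\optimalset{}\cap W_{x^\ast}$ is embedded in the open set $W_{x^\ast}$ (so the manifold topology agrees with the subspace topology, and this agreement glues), and alternatively it is immediate from the fact that $i_{\optimalset{}}$ is a continuous injection from a compact space into the Hausdorff space $\R^d$. Either way, $\optimalset{}$ is a $\mathcal{C}^2$ embedding submanifold of $\R^d$.

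The main obstacle is the globalization itself rather than any single estimate: one must check that the locally defined structures are mutually compatible (constant dimension, $\mathcal{C}^2$ transition maps) and, crucially, that the resulting manifold topology is \emph{exactly} the subspace topology, so that "embedded" and not merely "immersed" holds — precisely the point where connectedness (for the dimension) and compactness (for the topological-embedding property) are used. A secondary point to watch is citing the local result at the right regularity level, i.e. that $V\in\mathcal{C}^3$ near $\optimalset{}$ is what yields a $\mathcal{C}^2$ submanifold; this is exactly why \Cref{ass_PL} is stated with $\mathcal{C}^3$ regularity.
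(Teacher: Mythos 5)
Your proposal is correct and follows the same overall strategy as the paper: obtain the local $\mathcal{C}^2$ embedded-submanifold structure of $\optimalset{}$ around each minimizer from the local P\L\ condition via \citet{rebjock2024fast}, and then pass from local to global. The globalization step, however, is handled with different ingredients. The paper shows that $\mathrm{rank}(\nabla^2 V)$ is constant on $\optimalset{}$ using \citep[Corollary 2.13]{rebjock2024fast} together with compactness (\Cref{ass_coercivity}), and then simply invokes \citep[Theorem 8.75]{Boumal_2023} to conclude that $\optimalset{}$ is a global embedded submanifold of dimension $d-\mathrm{rank}(\nabla^2 V)$. You instead use connectedness of $\optimalset{}$ (\Cref{lemma_unimodal}) to promote the locally constant dimension to a globally constant one, and then verify the two conditions of \Cref{def_submanifold} directly (compatibility of the slice charts, rank of $Di_{\optimalset{}}$, and agreement of the manifold topology with the subspace topology). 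Both routes are legitimate: yours is more self-contained and makes explicit where connectedness and the subspace topology enter, while the paper's is shorter and does not need the uni-modality result for this particular lemma, since the constant-rank corollary plays the role that connectedness plays in your argument. One small caveat on your alternative justification of condition (2): deducing that $i_{\optimalset{}}$ is a homeomorphism onto its image from ``continuous injection of a compact space into a Hausdorff space'' presupposes that $\optimalset{}$ with its manifold topology is compact, which is only clear once that topology is already identified with the subspace topology; your primary argument (gluing the local slice charts, which by construction carry the subspace topology) is the one that closes this loop, so it should be kept as the main line, with the compactness remark only as a corollary.
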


\begin{proof}
According to Assumption \ref{ass_PL}, we know that for each $x \in \optimalset{}$, $V(x)$ satisfies the $2$-PL condition around $x$ in $\R^d$ with constant $\PLconstant$. Then we immediately see that $\optimalset{}$ is a $C^2$ embedding submanifold locally around $x$ by \citep[Lemma 2.15]{rebjock2024fast}. Moreover, we also know that $rank(\nabla^2 V)$ is a constant by \citep[Corollary 2.13]{rebjock2024fast} since we assume that $\optimalset{}$ is compact in Assumption \ref{ass_coercivity}. Using \citep[Theorem 8.75]{Boumal_2023}, we obtain that $\optimalset{}$ is a global embedding submanifold with dimension $d - rank(\nabla^2 V)$. 
\end{proof}

\begin{lemma}(No boundary)
Suppose that $V$ satisfies \Cref{ass_PL,ass_no_saddle,ass_coercivity}, then the optimal set $\optimalset{}$ is an embedding submanifold of $\R^d$ without boundary.
\end{lemma}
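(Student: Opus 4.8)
The plan is to prove that every point of $\optimalset{}$ has a neighbourhood that is $\mathcal{C}^2$-diffeomorphic to an \emph{open} ball in $\R^k$, never merely to a relatively open piece of a half-space $\{u^k\ge 0\}$; since whether a manifold point is a boundary point is independent of the chart (invariance of domain), this forces $\partial\optimalset{}=\emptyset$, which, combined with the preceding lemma that $\optimalset{}$ is a $\mathcal{C}^2$ embedding submanifold, gives the claim.

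First I would identify $\optimalset{}$ with a level set of $V$. By \Cref{lemma_unimodal} the set $\optimalset{}$ is connected, and every point of $\optimalset{}$ is a local minimiser, hence a critical point, so $\nabla V\equiv 0$ on $\optimalset{}$; restricting to the $\mathcal{C}^2$ manifold $\optimalset{}$, the intrinsic gradient of $V|_{\optimalset{}}$ vanishes, so $V$ is constant on the connected set $\optimalset{}$, say $V\equiv V^\star$ there. Coercivity (\Cref{ass_coercivity}) ensures the global minimum of $V$ is attained, necessarily at a local minimiser, i.e. in $\optimalset{}$, hence $V^\star=\min_{\R^d}V$; conversely every $y$ with $V(y)=V^\star$ is a global, hence local, minimiser, so $\optimalset{}=V^{-1}(V^\star)$.

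Next I would produce a boundaryless local model at an arbitrary $x^\star\in\optimalset{}$. Since $\optimalset{}$ has a single connected component, $x^\star$ lies in the neighbourhood $\mathcal{N}(\optimalset{})$ of \Cref{ass_PL} on which $V\in\mathcal{C}^3$ and the local P\L\ inequality holds, so the ``$2$-P\L'' hypothesis of \citet{rebjock2024fast} holds around $x^\star$. By \citep[Lemma 2.15]{rebjock2024fast} there is an open ball $\mathcal{U}\ni x^\star$ in $\R^d$ such that $V^{-1}(V^\star)\cap\mathcal{U}$ is the graph of a $\mathcal{C}^2$ map from a neighbourhood of the origin in $\ker\nabla^2 V(x^\star)$ into its orthogonal complement; by Step 1 this set equals $\optimalset{}\cap\mathcal{U}$, and it is $\mathcal{C}^2$-diffeomorphic to an open subset of $\R^k$ with $k=\dim\ker\nabla^2 V(x^\star)$. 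As a self-contained alternative to invoking this graph description, one can argue by contradiction: if $x^\star\in\partial\optimalset{}$, extend the boundary chart around $x^\star$ to a $\mathcal{C}^2$ embedding $\tilde\psi$ of a full neighbourhood $\tilde U\subseteq\R^k$ of $0$ (an embedding once $\tilde U$ is small); then $F:=V\circ\tilde\psi$ is $\mathcal{C}^2$ and vanishes identically on the relatively open half $\{u^k> 0\}$, so $F,\nabla F,\nabla^2 F$ all vanish on $\{u^k=0\}$ by continuity, and Taylor's theorem gives $F(u)=o(|u^k|^2)$ uniformly near $0$; but for $u^k<0$ the point $\tilde\psi(u)$ is not in $\optimalset{}$, and since $\tilde\psi$ is bi-Lipschitz with $\optimalset{}$ near $x^\star$ equal to $\tilde\psi(\{u^k\ge 0\})$, we get $\dist(\tilde\psi(u),\optimalset{})\gtrsim|u^k|$, whence the quadratic-growth bound $V\ge c\,\dist(\cdot,\optimalset{})^2$ near $\optimalset{}$ (a consequence of local P\L, see \citet{rebjock2024fast}) yields $F(u)\gtrsim|u^k|^2$, a contradiction. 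Either way $x^\star$ is an interior point of $\optimalset{}$, and since it was arbitrary, $\partial\optimalset{}=\emptyset$.

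I expect the main obstacle to be exactly this step: ruling out a ``half-disc'' local picture at a would-be edge point. Both routes above turn on the same point, namely that the local P\L\ inequality forces $V$ to grow quadratically away from $\optimalset{}$ in \emph{every} nearby ambient direction, which is incompatible with the second-order flatness that a genuine manifold boundary would impose on $V$; the remaining bookkeeping — extending charts $\mathcal{C}^2$-smoothly, the bi-Lipschitz distance estimate, and the appeal to invariance of domain — is routine.
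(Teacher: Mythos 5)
Your proposal is correct, and your self-contained fallback argument is essentially the paper's own proof in different clothing: both derive a contradiction at a putative boundary point $\bar x$ between (i) the quadratic growth $V \ge c\,\dist^2(\cdot,\optimalset{})$ inherited from the local P\L\ condition via \citet{rebjock2024fast} and (ii) second-order flatness of $V$ in the direction transversal to $\partial\optimalset{}$. The difference is only in how the flatness is obtained: the paper takes the inward unit normal $\bm n$ to $\partial\optimalset{}$ inside $T_{\bar x}\optimalset{}$, uses the Morse--Bott equivalence to get $\nabla^2 V(x_n)[\bm n(r_n)]=0$ at interior points $x_n\to\bar x$, passes to the limit by continuity of $\nabla^2 V$, and Taylor-expands along the geodesic extended outward; you instead extend a boundary chart to a full $\mathcal{C}^2$ embedding $\tilde\psi$, observe that $F=V\circ\tilde\psi$ is constant on the half-space so its first and second derivatives vanish on $\{u^k=0\}$, and combine the resulting $o(|u^k|^2)$ bound with a bi-Lipschitz estimate $\dist(\tilde\psi(u),\optimalset{})\gtrsim|u^k|$. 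Your primary route (quoting the local graph-over-$\ker\nabla^2 V(x^\star)$ description from \citep[Lemma 2.15]{rebjock2024fast}, together with the identification $\optimalset{}=V^{-1}(V^\star)$ and invariance of domain) is more direct than what the paper does, but it buys its brevity by leaning on the precise formulation of the cited external result, which is exactly the point the paper's authors chose not to take for granted; your contradiction argument is the right self-contained substitute, and its remaining bookkeeping (chart extension, the local identification of $\optimalset{}$ with the chart image via the subspace topology, and the lower Lipschitz bound from the full-rank differential) is indeed routine. No gap.
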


\begin{proof}
Assume the optimal set $\optimalset{}$ is a $k$ dimensional embedding submanifold with boundary $\partial \optimalset{}$. We immediately know that $\partial \optimalset{}$ is a $k-1$ dimensional submanifold around $\bar{x}$ by Theorem 5.11 in \citep{Lee2020IntroductionTS}. Then we take $\bar{x} \in \partial S$ and $\{x_n\}_{n=1}^{+\infty} \subset \text{int} \optimalset{}$ around $\bar{x}$ such that $x_n \rightarrow \bar{x}$ as $n \rightarrow +\infty$. Moreover, we can take $\bm{n}$ as a normal direction of $\partial \optimalset{}$ at $\bar{x}$ but $\bm{n} \bm \notin N_{\bar{x}}\optimalset{}$ such that
\begin{equation*}
x(r) = \text{exp}(r \bm{n}), \ \ x(0) = \bar{x},  \ \ x(r_n) = x_n \ \ \ r, r_n \in [0, \epsilon] 
\end{equation*}
with $r < \text{inj}(\bar{x})$ small enough. We denote that
\begin{equation*}
\bm{n}(r) = \frac{\partial}{\partial r} \exp(r \bm{n}) = x_{\ast}(\bm{n})\Big|_r \in T_{x(r)} \optimalset{}.
\end{equation*} 
By equivalence of $2$-PL condition in Assumption \ref{ass_PL} and Morse-Bott property of interior point of $\optimalset{}$ in \citep{rebjock2024fast} 
when $V(x) \in C^2$, we have
\begin{equation*}
\nabla^2 V(x_n)[\bm{n}(r_n)] = 0,
\end{equation*}
since $\bm{n}(r_n) \in Ker \nabla^2 V(x_n)$. Again by continuity of $\nabla^2 V$, we have
\begin{equation}\label{Contradiction}
\nabla^2 V(\bar{x})[\bm{n}] = \lim_{n \rightarrow +\infty} \nabla^2 V(x_n)[n(r)] = 0. 
\end{equation}
On the other hand, we already know that $\bm{n}$ is normal to $\partial \optimalset{}$ at $\bar{x}$ and $x(r) \in \optimalset{} \backslash \partial \optimalset{}$ when we take $r > 0$, hence $-\bm{n}$\footnote{
Let us clarify the definition of ``tangent space" and ``normal space" of a point which is at the boundary of a manifold. Taking $M$ to be a $k$ dimensional manifold with boundary $\partial M$, and $W$ to be an open set of $H^k = \{ (x_1,...,x_k) \in \R^k | x_k \geq 0 \}$, then we actually have $\partial W = W \cap \partial H^k$. Now we keep the notation $\bar{x} \in \partial M \subset M$, then there exists a local chart $(W, \phi)$ around $\bar{x}$, i.e. $\phi(a) = \bar{x}, a \in W$ and $\phi: W \rightarrow M$ is an embedding. \\
When we consider $\bar{x} \in M$, we have $T_{\bar{x}}M = \R^k$. This conclusion can be verified according to the definition of tangent space by $\R$-algebra $C^{\infty}_{\bar{x}}(M)$ in \cite[Section 22.4 and Figure 22.5]{tu2010introduction}. If we also have $M$ is a embedding submanifold of $\R^d$, then the normal space $N_{\bar{x}}(M)$ is the complement space of $T_{\bar{x}}M$ in $\R^d$. \\
However, when we consider $\bar{x} \in \partial M$, we need define $\bar{\phi} = \phi|_{\partial W}: \partial W \rightarrow \partial M$, which is a local chart on $\R^{k-1}$. Then we can say that $T_{\bar{x}}(\partial M) = \bar{\phi}_{\ast}(T_a(\partial W)) =  \bar{\phi}_{\ast}(\R^{k-1})$, which is a $k-1$ dimensional subspace of $T_{\bar{x}}(M)$. The normal space $N_{\bar{x}}(\partial M)$ is the complement space of $T_{\bar{x}}(\partial M)$ in $T_{\bar{x}}(M)$. This definition is not contradicted to the conclusion that $\partial M$ is a
$k-1$ submanifold of $M$ without boundary (See \cite[Section 22.3]{tu2010introduction}).
}
is the outward normal direction to $\optimalset{}$ at $\bar{x}$ according to local orientation. Again by Quadratic Growth property in \citep[Proposition 2.2]{rebjock2024fast}, we have
\begin{equation}\label{QG property}
0 < C \text{dist}^2(x(r), \optimalset{}) \leq V(x(r)) - V(\bar{x}), \ \ \ \ r \in [-\epsilon, 0).  
\end{equation}
for some constant $C(\PLconstant) > 0$. However, if we consider the Tayor expansion on the right hand side,
\begin{equation*}
\begin{aligned}
V(x(r)) - V(x(0)) & = \nabla^2 V(x(r))(x(r) - x(0))^2 + o(\| x(r) - x(0) \|^2) \\ & = r^2 \langle \nabla^2 V(\bar{x})[-\bm{n}], [-\bm{n}] \rangle + o(r^2) \\ & = o(r^2),
\end{aligned}
\end{equation*}
which is contradictory to \eqref{QG property} since 
\begin{equation*}
r^2 \sim \text{dist}^2(x(r), \optimalset{}) \leq o(r^2), \ \ \ \ r \in [-\epsilon, 0) 
\end{equation*}
when we take $\epsilon$ small enough. Now we finish the proof. 
\end{proof}

\section{Local coordinate representation of geometric structures in \Cref{subsection: submanifold}}\label{Rie structure on local chart}

\textbf{The first fundamental form.} Using the local chart $(\Gamma, \phi)$ of $\optimalset{}$, the Riemannian metric $g_{\optimalset{}}$ can be written as
\begin{equation}\label{the first fdmt form}    
g_{\optimalset{}} = \sum_{i,j=1}^k g_{ij}(u)du^idu^j, \ \ \ \ u \in \Gamma \subset \R^k,
\end{equation}
where 
\begin{equation*}
g_{ij}(u) = \frac{\partial \M(u)}{\partial u^i} \cdot \frac{\partial \M(u)}{\partial u^j}, \ u \in \Gamma \subset \R^k.
\end{equation*}
Here $\{{\partial \M}/{\partial u^i}\}_{i=1}^k$ are actually $k$ tangent vector fields of $k$-dimensional embedding submanifold $\optimalset{}$, they generate the tangent plane on each point of $\optimalset{}$, i.e.
\begin{equation*}
T_m \optimalset{} = \text{Span}\langle \frac{\partial \M(u)}{\partial u^1},...,\frac{\partial \M(u)}{\partial u^k} \rangle, \quad m = (m_1(u),...,m_d(u)) \in \optimalset{},
\end{equation*}
and ``$\cdot$'' is standard inner product on $\R^d$. Also, these $k$ tangent vector fields decide $d-k$
normal vector fields on $\optimalset{}$ by following global equation group
\begin{equation}\label{constrain of N-1}
\frac{\partial \M(u)}{\partial u^i} \cdot \mathcal{N}_l(u) \equiv 0,\ \ \ i = 1,...,k, \ \ l = k+1,...,d, \ \ \ u \in \Gamma \subset \R^k,
\end{equation}
moreover, we can take $\mathcal{N}_{k+1},...,\mathcal{N}_d$ as standard normal vector fields by following global equation group,
\begin{equation}\label{constrain of N-2}
\N_i(u) \cdot \N_j(u) \equiv \delta_{ij}, \ \ \ i,j = k+1,...,d, \ \ \ u \in \Gamma \subset \R^k,
\end{equation}
they generate the normal bundle $N$ on $\optimalset{}$, i.e.
\begin{equation*}
N(m) = \text{Span} \langle \N_{k+1}(u),...,\N_d(u) \rangle, \ \ \ \ m = \M(u) \in M, \ \ \ u \in \Gamma \subset \R^k,
\end{equation*}
\textbf{The second fundamental form.} We define the second fundamental form  as a symmetric quadratic form on the local chart $(\Gamma, \phi)$,
\begin{equation}\label{the second fdmt form}
\Pi = - \sum_{l=k+1}^d \bigg\{ t^l \sum_{i,j=1}^k G_{ij}(l) du^idu^j \bigg\}, \ \ \ u \in \Gamma \subset \R^k, \ \ t \in B(\epsilon) \subset \R^{d-k},
\end{equation}
where the matrix $G(l) = (G_{ij}(l))$ is symmetric for each $l = k+1,...,d$. We also use the following notation
\begin{equation}\label{matrix tilde G}
G_j^i(l) = \sum_{s = 1}^k g^{is} G_{sj}(l), \ \ \ \ l = k+1,...,d.
\end{equation}
It is easy to see that the matrix $\tilde{G}(l) = (G_j^i(l))$ is also symmetric since the metric tensor $(g_{ij})$ is symmetric. Then, matrices $\{G(l)\}_{l=k+1}^d$ of the second fundamental form $\Pi$ can be locally written as
\begin{equation*}
G_{ij}(l)(u) = \frac{\partial \M(u)}{\partial u^i} \cdot \frac{\partial  \N_l(u)}{\partial u^j}, \ \ \ \ \ u \in \Gamma \subset \R^k.
\end{equation*}
Moreover, by constraint \eqref{constrain of N-2} of normal vector fields $\N_l,\ l = k+1,...,d$, we also have
\begin{equation*}
G_{ij}(l)(u) = \frac{\partial \M(u)}{\partial u^i} \cdot \frac{\partial  \N_l(u)}{\partial u^j} = - \frac{\partial^2 \M(u)}{\partial u^i \partial u^j} \cdot \N_l(u), \ \ \ \ \ u \in \Gamma \subset \R^k, 	
\end{equation*}
which implies that the matrix $G(l) = (G_{ij}(l))$ is naturally symmetric and $\Pi$ is a symmetric quadratic form. The geometric meaning of the second fundamental form of Riemannian submanifold $\optimalset{}$ by embedding structure \eqref{embedding structure} is the projection of the variation of normal vector fields along the tangent space of Riemannian submanifold $(\optimalset{}, g_{\optimalset{}})$ based on the ambient space $(\R^d, g_E)$.

\section{Proofs of results in \Cref{section_reduction_to_neumann_eigenvalue}}
\subsection{Proof of \Cref{lemma_property_of_V}}
% \begin{proof}
    \paragraph{Property 1.}  We prove via contradiction. Suppose that there is a sequence $x_i \in \partial \mathcal{N}(\optimalset{})$ such that 
    $\lim_{i\rightarrow\infty} \dist(x_i, \optimalset{}) = 0$. Note that $\partial \mathcal{N}(\optimalset{})$ is bounded. Hence, WLOG, we assume $x_i$ is convergent, since otherwise we can always take a convergent subsequence. Denote $x = \lim_{i\rightarrow\infty} x_i$. By construction, $x \in \partial \mathcal{N}(\optimalset{})$, boundary of an open neighborhood of $\optimalset{}$, but since $\dist(x, \optimalset{}) = 0$, $x\in \optimalset{}$, which leads to a contradiction.

    \paragraph{Property 2.} Note that under \Cref{ass_PL,ass_no_saddle,ass_growth_V}, $X$ contains at most finitely many singletons. This can be proved via contradiction: Suppose that there is an infinite sequence $x_i \in X$. Note that $X$ is bounded by \Cref{ass_growth_V}. Hence, WLOG, we assume $x_i$ is convergent, since otherwise we can always take a convergent subsequence. Denote $x = \lim_{i\rightarrow\infty} x_i$. From the construction of $\{x_i\}$, in any neighborhood of $x$, there is another local maximum point, which contracts with \Cref{ass_no_saddle} and $V \in \mathcal{C}^2$, since $x$ is a strict local maximum.

    \paragraph{Property 3.} We know from \Cref{ass_growth_V}, $\nabla V(x) \neq 0$ for any $x$ beyond a compact set. Denote this compact set by $\mathcal{Y}$. We now focus our discussion in $\mathcal{Y}$ and prove via contradiction.
    Suppose that within the said compact set there is a sequence $x_i \in \mathcal{Y} \cap \left({\mathcal{N}(X)} \cup \mathcal{N}(\optimalset{})\right)^c$ such that $\lim_{i\rightarrow\infty} \nabla V(x_i) = 0$. Note that $\left({\mathcal{N}(X)} \cup \mathcal{N}(\optimalset{})\right)^c$ is bound. WLOG, we assume $x_i$ is convergent, since otherwise we can always take a convergent subsequence. Denote $x = \lim_{i\rightarrow\infty} x_i$. We know that $x \in \left({\mathcal{N}(X)} \cup \mathcal{N}(\optimalset{})\right)^c$ since this set is closed. Moreover, since $\nabla V(x) = 0$, we have $x\in\optimalset{}$ or $x \in X$, which leads to a contradiction.
% \end{proof}
% \subsection{Proof of \Cref{Lemma:lower bound of nablaV}} \label{section_proof_of_error_bound}
% \begin{proof}
%         Take $U(x) = \left(V(x) - V^*\right)^{1 - \frac{1}{\alpha}}$. Using \Cref{ass_PL}, one has
%         \begin{equation*}
%             |\nabla U(x)| = (1-\frac{1}{\alpha}) \left(V(x) - V^*\right)^{-\frac{1}{\alpha}} |\nabla V(x)| \geq (1-\frac{1}{\alpha}) \left(V(x) - V^*\right)^{-\frac{1}{\alpha}} \PLconstant^{\frac{1}{\alpha}} \left(V(x) - V^*\right)^{\frac{1}{\alpha}} = (1-\frac{1}{\alpha}) \PLconstant^{\frac{1}{\alpha}}.
%         \end{equation*}
%         For any point $x \notin \optimalset{}$,
%         using \citep[Lemma 2.5]{drusvyatskiy2015curves} with $K = U(x)\frac{1}{(1-\frac{1}{\alpha}) \PLconstant^{\frac{1}{\alpha}}}$, $\alpha=0$, $r = (1-\frac{1}{\alpha}) \PLconstant^{\frac{1}{\alpha}}$, one has
%         \begin{align*}
%             \dist(x, \optimalset{}) \leq&\ \frac{1}{(1-\frac{1}{\alpha}) \PLconstant^{\frac{1}{\alpha}}} U(x) = \frac{\alpha}{(\alpha-1)\PLconstant^{\frac{1}{\alpha}}}\left(V(x) - V^*\right)^{1 - \frac{1}{\alpha}} \\
%             \leq&\ \frac{\alpha}{(\alpha-1)\PLconstant^{\frac{1}{\alpha}}}\PLconstant^{\frac{1}{\alpha}-1}|\nabla V(x)|^{\alpha-1} 
%             = \frac{\alpha\PLconstant^{-1}}{(\alpha-1)}|\nabla V(x)|^{\alpha-1}.
%         \end{align*}
%         We have that \Cref{eqn:error bound} holds for some constant $\errorboundconstant$.
%     \end{proof}

\subsection{Proof of \Cref{lemma_Lyapunov_alpha_PL}} \label{proof_of_Lyapunov_alpha_PL}
\paragraph{Subdomain $\Xi_{1} = \left\{x: 2R_0 \leq \|x\| \right\}$.}
% To establish the inequality (\ref{eqn:Lyapunov}), first consider $x$ such that $|x| \geq 2 R_0$, where 
Recall $R_0$ from \Cref{ass_growth_V}. One has 
    \begin{equation*}
        \dist(x, \optimalset{}) \geq |x| - R_0\ \Rightarrow\ \dist(x, \optimalset{}) \geq \frac{1}{2}|x|.
    \end{equation*}
    From \Cref{ass_growth_V}, one has 
    \begin{equation*}
        \frac{\generator W}{\epsilon W} \leq \frac{C_g|x|^{2}}{2\epsilon} - \frac{1}{4\epsilon^2}\errorboundconstant^2 \cdot \dist^{2}(x, \optimalset{}) \leq \frac{C_g|x|^{2}}{2\epsilon} -  \frac{1}{64\epsilon^2}\errorboundconstant^2|x|^2.
    \end{equation*}
    Recall that $\epsilon \leq \frac{\errorboundconstant^2}{64C_g}$ and $|x| \geq 2R_0 \geq R_0$. We have
    \begin{equation*}
    \frac{\generator W}{\epsilon W} \leq \frac{C_g|x|^{2}}{2\epsilon} -  \frac{1}{64\epsilon^2}\errorboundconstant^2|x|^{2} \leq - \frac{\errorboundconstant^2|x|^{2}}{128} \frac{1}{\epsilon^2} \leq - \frac{\errorboundconstant^2R_0^{2}}{128} \frac{1}{\epsilon^2}.
    \end{equation*}
    Now we can select the parameter $\sigma$ in \eqref{eqn:Lyapunov} as 
    ${\sigma_{1} = \frac{\errorboundconstant^2R_0^{2}}{128} \frac{1}{\epsilon^2}}$ in this region. Consequently we can establish the inequality (\ref{eqn:Lyapunov}) for $|x| \geq 2R_0$.

\paragraph{Subdomain $\Xi_{2} = \left\{x: \sqrt{C\epsilon} \leq \dist(x, \optimalset{}) \text{ and } \|x\|\leq 2R_0 \right\}$.}
    % We then consider $|x| \leq 2R_0$. 
    % Since $V \in \mathcal{C}^2$, we have that $M_\Delta := \sup_{|x|\leq 2R_0} |\Delta V(x)| < \infty$.
    % To bound the gradient norm $\|\nabla V(x)\|$, 
    There are three cases.
    \begin{itemize}
        \item  
        $x$ is in the $R_1$ neighborhood of $X$, where we recall that $X$ is the collection of all local maxima of $V$ in \Cref{lemma_property_of_V}: Using the second property of \Cref{lemma_property_of_V}, we have
        \begin{equation*}
            \frac{\generator W}{\epsilon W} \leq -\frac{d\mu^{-}}{2\epsilon}.
        \end{equation*}
        Note that the value \red{$\sigma^1_2 = \frac{d\mu^{-}}{2\epsilon}$} in this region.
        \item $x \in \mathcal{N}(\optimalset{})$: Recall $M_\Delta$ from \Cref{lemma_useful_constants}.
        Using the error bound in \Cref{lemma_useful_constants}, one has
        \begin{equation*}
            \frac{\generator W}{\epsilon W} \leq \frac{M_\Delta}{2\epsilon} - \frac{1}{4\epsilon^2}\PLconstant^2\cdot\dist^{2}(x, \optimalset{}).
        \end{equation*}
        Consequently, we can establish the inequality (\ref{eqn:Lyapunov}) for $x\in\mathcal{N}(\optimalset{})$ with $C = \frac{4M_\Delta}{\PLconstant^2}$ by
        \begin{equation*}
        - \frac{1}{4\epsilon^2}\PLconstant^2\cdot\dist^{\frac{2}{\alpha-1}}(x, \optimalset{}) \leq -\frac{M_\Delta}{\epsilon}.
        \end{equation*}
        Note that the value \red{$\sigma^2_2 = \frac{M_\Delta}{2\epsilon}$} in this region.
        \item $x$ is in the compact set, but not in the above two cases, i.e. 
        \begin{equation*}
            x \in \left(\{x:\dist(x, X) \leq R_1 \}\cup \mathcal{N}(\optimalset{})\right)^c \cap \{x:\|x\| \leq 2R_0\}.
        \end{equation*}
        According to \Cref{lemma_property_of_V}, there exists a constant lower bound of $\|\nabla V(x)\| \geq g_0 > 0$ in this regime.
        One has
        \begin{equation*}
            \frac{\generator W}{\epsilon W} \leq \frac{M_\Delta}{2\epsilon} - \frac{g_0^2}{4\epsilon^2} \leq -\frac{M_\Delta}{4\epsilon}.
        \end{equation*}
        For a sufficiently small $\epsilon$, such that  $\frac{g_0^2}{4\epsilon^2} \geq \frac{dL}{\epsilon}$.
        Note that the value \red{$\sigma^3_2 = \frac{M_\Delta}{4\epsilon}$} in this region.
    \end{itemize}
    WLOG, we assume $\frac{M_\Delta}{4\epsilon} \geq \frac{d\mu^{-}}{2\epsilon}$ (otherwise simply set $\mu^{-} = M_\Delta/2d$ in \Cref{lemma_property_of_V}). We have $\sigma_2 = \min\{\sigma^1_2, \sigma^2_2, \sigma^3_2\} = \frac{d\mu^{-}}{2\epsilon}$.

    \paragraph{Global estimation of $\sigma$.}
    Since we need \cref{eqn_requirement_Lyapunov} to hold on $\R^d \backslash U$, we take
    \begin{equation}\label{Parameter: sigma}
    \sigma = \inf_{i\in\{1, 2\}} \sigma_i
    % = \frac{\errorboundconstant^2}{4\epsilon^2} \bigg( C\epsilon\bigg)^{\frac{2}{\alpha}}.
    \end{equation}
    % Clearly, this estimation also holds for $U$ since $\R^d \backslash U \subset \R^d \backslash \optimalset{(C\epsilon)^{\frac{\alpha-1}{\alpha}}}$.

    \paragraph{Estimation of $b$.}
    Since we will pick $U \subseteq \optimalset{\sqrt{C\epsilon}}$, from \Cref{lemma_property_of_V}, we can set
    \begin{equation}\label{Parameter:b}
        b:= \sigma + \frac{M_\Delta}{2\epsilon}.
    \end{equation}

\section{Proof of results in \Cref{section_stability_neumann_eigenvalue}}
\subsection{Proof of \Cref{lemma_wely's formula}}\label{proof_of_lemma_wely's formula}
\begin{proof}
Based on the representation of local coordinate \eqref{local coordinate decomposition}, we use the formula of change of variables, 
\begin{equation*}
\int_{T(\epsilon)} \phi(y) dy = \int_{T(\epsilon)} \phi(y) |J(u,r)|du dr,
\end{equation*}
where $J(u,r)$ is Jacobian determinant of change of variables of differmorphism $y \rightarrow m + \nu$ defined by \eqref{local coordinate decomposition}. We can easily compute the Jacobian determinant $J(u,t)$ on each local chart $(\Gamma, \phi)$ as 
\begin{equation*}
\begin{aligned}
J(u,r) = & \Big| \Big[ \frac{\partial y}{\partial u^1},...,\frac{\partial y}{\partial u^k}, \frac{\partial y}{\partial r^{k+1}},...,\frac{\partial y}{\partial r^d} \Big] \Big| \\ = & \Big| \Big[ \frac{\partial y}{\partial u^1},...,\frac{\partial y}{\partial u^k}, \N_{k+1},...,\N_d \Big] \Big| \\ = & \Big| \Big[ \Big(\frac{\partial \M}{\partial u^1} + \sum_{l=k+1}^d r^l \frac{\partial \N_l}{\partial u^1}\Big),...,\Big(\frac{\partial \M}{\partial u^k} + \sum_{l=k+1}^d r^l \frac{\partial \N_l}{\partial u^k} \Big), \N_{k+1},...,\N_d \Big] \Big| \\ = & \Big| \Big[ \Big( \M_1 + \sum_{l = k+1}^d r^l \N_{l,1} \Big),...,\Big(\M_k + \sum_{l = k+1}^{d} r^l \N_{l,k} \Big), \N_{k+1},...,\N_d \Big] \Big|. 
\end{aligned}
\end{equation*}
We emphisize that these vector fields
\begin{equation*}
\{ \M_i \}_{i=1}^k, \ \ \ \ \{\N_i\}_{i=1}^{d-k}, \ \ \ \ \{ \{\N_{l,j}\}_{l=k+1}^{d} \}_{j=1}^k,
\end{equation*}
only depend on variable $u \in \Gamma \subset \R^k$, i.e. they are only defined on Riemannian submanifold $S$. Moreover, each vector at the point $m = \M(u) \in \optimalset{}$ is a linear combination of these $d$ basic vectors $\{ \{ \M_i(u) \}_{i=1}^k, \{\N_l(u)\}_{l=k+1}^d \}$. Then we have    
\begin{equation*}
\N_{l,j} = \sum_{i=1}^k T_j^i(l) \M_i + \sum_{s=k+1}^{d} T_j^s(l) \N_s.
\end{equation*}
By constrain \eqref{constrain of N-1} and \eqref{constrain of N-2}, we can easily compute that 
\begin{equation*}
\left\{
\begin{aligned}
& T_j^i(l) = \sum_{p = 1}^k g^{ip}G_{pj}(l) = G_j^i(l) & i = 1,...,k, \\
& T_j^s(l) \equiv 0,  & s = k+1,...,d, 
\end{aligned}
\right.
\end{equation*}
where $(g^{ip}) = (g_{ip})^{-1}$ is the inverse matrix of $k \times k$ matrix $(g_{ij})$ in the Riemannian metric tensor \eqref{the first fdmt form}. Hence we have
\begin{equation*}
\N_{l,j} = \sum_{i=1}^k G_j^i(l) \M_i.
\end{equation*}
Now we back to the original computation,
\begin{equation*}
\begin{aligned}
\int_{T(\epsilon)} \phi(y) dy = & \int_{T(\epsilon)} \phi(y) |J(u,r)| dudr \\ = & \int_{T(\epsilon)} \phi(y)\Big|\text{det} \Big(\big(I_k + \sum_{l = k+1}^d r^l \tilde{G}(l) \big)\Big[\M_1,...,\M_k,\N_{k+1},...,\N_d \Big] \Big) \Big|du dr,
\end{aligned}
\end{equation*}
we observe that
\begin{equation*}
\text{det} \Big( \Big[\M_1,...,\M_k,\N_{k+1},...,\N_d \Big]^{T} \Big[\M_1,...,\M_k,\N_{k+1},...,\N_d \Big] \Big) = \det(g)
\end{equation*}
by \eqref{constrain of N-1}, \eqref{constrain of N-2} and \eqref{the first fdmt form}, and the determinant $\text{det}(g)$ does not depend on variable $r \in B(\epsilon)\subset \R^{d-k}$, we finally have 
\begin{equation*}
\begin{aligned}
\int_{T(\epsilon)} \phi(y) dy =  & \int_{T(\epsilon)} \phi(y) \Big|\text{det} \Big( \big(I_k + \sum_{l = k+1}^d r^l \tilde{G}(l) \big)\Big[\M_1,...,\M_k,\N_{k+1},...,\N_d \Big] \Big) \Big| dudr \\ = & \int_{T(\epsilon)} \phi(y)\Big |\text{det} \big(I_k + \sum_{l = k+1}^d r^l \tilde{G}(l) \big) \Big| \sqrt{\det(g)}
dudr \\ = & \int_{\Gamma} \Big\{ \int_{B(\epsilon)} \phi(y) \Big|\text{det} \big(I_k + \sum_{l = k+1}^d r^l \tilde{G}(l) \big)\Big| dr^{k+1}...dr^d \Big\} d\M(u),
\end{aligned}
\end{equation*}
now we finish the proof.
\end{proof}

\subsection{Proof of \Cref{lemma_transformation}}\label{proof_of_lemma_transformation}
\begin{proof}
For the direction of the parameter $u^i$, by chain rule we have
\begin{equation*}
\begin{aligned}
\nabla_{u^i} \phi(y(u,r)) = \nabla_y \phi \cdot \frac{\partial y}{\partial u^i} = & \nabla_y \phi \cdot [\frac{\partial \M}{\partial u^i} + \sum_{l = k+1}^d r^l \frac{\partial \N_l}{\partial u^i}] \\ = & \nabla_y \phi \cdot [\M_i + \sum_{l = k+1}^d r^l \frac{\partial \N_l}{\partial u^i}] \\ = & \nabla_y \phi \cdot [\M_i + \sum_{l=k+1}^d r_l \sum_{j=1}^k G_i^j(l) \M_j] \\ = & \nabla_y \phi \cdot [\M_1,...,\M_d] \cdot [ I_k + \sum_{l=k}^d r^l \tilde{G}(l)]. 
\end{aligned}
\end{equation*}
For the direction of the parameter $t^i$, by chain rule we have
\begin{equation*}
\begin{aligned}
\nabla_{r^i} \phi(y(u,r)) = \nabla_y \phi \cdot \frac{\partial y}{\partial r^i} = \nabla_y \phi \cdot \N_i.
\end{aligned}
\end{equation*}
Combining these two results, we have
\begin{equation*}
\nabla_{(u,r)} \phi(y(u,r)) = \nabla_y \phi \cdot [\M_1,...,\M_k, \N_{k+1},...,\N_d] \cdot
\left[ 
\begin{matrix} 
I_k + \sum_{l=k+1}^d r
_l \tilde{G}(l) & 0 \\
0 & I_{d-k} 
\end{matrix}
\right].
\end{equation*}
Now we finish the proof.
\end{proof}

\subsection{Proof of \Cref{proposition_stability}}\label{proof_of_proposition_stability}
\begin{proof}
Our main idea to describe the asymptotic behavior of \Poincare\ constant $\lambda_1(T(\epsilon))$ is try to obtain the lower and upper bound of $\lambda_1(T(\epsilon))$ based on the first eigenvalue of Laplacian-Beltrami operator on a trivial product Riemannian manifold $(\optimalset{}\times B(\epsilon), g_{\optimalset{}} + g_{B(\epsilon)})$ and small $\epsilon$ perturbation issues. Here $g_{B(\epsilon)} = i_{B(\epsilon)}^{\ast}(g_E)$ is the standard Riemannian metric on $B(\epsilon) \subset \R^{d-k}$, i.e. the pullback of $g_E$ by the including map $i_{B(\epsilon)}$. In the next
we try to show 
\begin{equation*}
\lambda_1(T(\epsilon)) \sim \lambda_1(S \times B(\epsilon)) + O(\epsilon).
\end{equation*}

We start from the min-max formula and divide the estimates into two parts.

\begin{itemize}

	\item The lower and upper bound of $L^2$ norm in $(T(\epsilon), g_{E})$ based on the $L^2$ norm in $(\optimalset{} \times B(\epsilon), g_{\optimalset{}} + g_{B(\epsilon)})$ and $\epsilon$ perturbation. By expansion formula of determinant,
	\begin{equation*}
	\text{det} \big(I_k + \sum_{l = k+1}^d r^l \tilde{G}(l) \big) = 1 + \sum_{i = 1}^k \sum_{l = k+1}^d r^l G_i^i(l) + \dots,
	\end{equation*} 
	and combine with bounded condition in \Cref{ass_optimalset_interior} about the second fundamental form $\Pi$ for Riemannian submanifold $\optimalset{}$, we have
	\begin{equation*}
	1 - A_2 \epsilon \leq \text{det} \big(I_k + \sum_{l = k+1}^d r^l \tilde{G}(l) \big) \leq 1 + A_2 \epsilon.
	\end{equation*}
	for some constant $A_2 = A_2(d, k, \tilde{G}(l))$. Using integral formula in \Cref{lemma_wely's formula}, we have
	\begin{equation*}
	\begin{aligned}
	& \int_{T(\epsilon)}|\phi(y)|^2 dy \leq (1 + A_2 \epsilon) \int_{\Gamma} \Big\{ \int_{B(\epsilon)} |\phi(y)|^2 dr^{k+1}...dr^{d} \Big\} d\M, \\ 
	& \int_{T(\epsilon)}|\phi(y)|^2 dy \geq (1 - A_2 \epsilon) \int_{\Gamma} \Big\{ \int_{B(\epsilon)} |\phi(y)|^2 dr^{k+1}...dr^{d} \Big\} d\M. \\
	\end{aligned}
	\end{equation*} 
	
	\item The lower and upper bound of Dirichlet energy in $(T(\epsilon), g_E)$ based on the Dirichlet energy in $(\optimalset{} \times B(\epsilon), g_{\optimalset{}} + g_{B(\epsilon)})$ and $\epsilon$ perturbation. By \Cref{lemma_transformation}, we have
	
	\begin{equation*}
		\nabla_y \phi(y) = \nabla_{(u,r)} \phi(y(u,r)) \cdot
		\left[ 
		\begin{matrix} 
		(I_k + \sum_{l=k+1}^d r^l \tilde{G}(l))^{-1} & 0 \\
		0 & I_{d-k} 
		\end{matrix}
		\right]
		\cdot [\M_1,...,\M_k, \N_1,...,\N_{d-k}]^{-1},
	\end{equation*} 

	then we obtain 
    \begin{equation*}
    |\nabla_y \phi(y)|^2 = \nabla_{(u,r)} \phi(y(u,r)) \cdot
    \left[ 
    \begin{matrix} 
	(I_k + \sum_{l=k+1}^d r^l \tilde{G}(l))^{-1}(g)^{-1} (I_k + \sum_{l=k+1}^d r^l \tilde{G}(l))^{-1}& 0 \\
    0 & I_{d-k} 
    \end{matrix}
    \right]
    \cdot \nabla_{(u,r)} \phi(y(u,r))^{T}. 
    \end{equation*}

	Selecting $t$ small enough and using bounded condition in \Cref{ass_optimalset_interior}, we have
    \begin{equation*}
    \left\{
    \begin{aligned}
    & |\nabla_y \phi(y)|^2 \leq (1 + A_1 \epsilon)(\nabla_u \phi(y(u,r)) (g)^{-1} \nabla_u \phi(y(u,r))^T + |\nabla_r \phi(y(u,r))|^2), \\
    & |\nabla_y \phi(y)|^2 \geq (1 - 
    A_1 \epsilon)(\nabla_u \phi(y(u,r)) (g)^{-1} \nabla_u \phi(y(u,r))^T + |\nabla_r \phi(y(u,r))|^2),
    \end{aligned}
    \right.
    \end{equation*}
	for some constant $A_1 = A_1(d, k, \tilde{G}(l)) > 0$. Recall the duality between $TS$ and $T^{\ast}S$ induced by metric $g$,
	\begin{equation*}
	d^u\phi = \sum_{i,j=1}^k g^{ij} \frac{\partial \phi}{\partial u^j} \frac{\partial}{\partial u^i} = \sum_{i=1}^k g^{ij} \nabla_{u^j} \phi,
 	\end{equation*}
	where $d^u$ is external derivative associated with parameter $u$, then we have 
	\begin{equation*}
	\begin{aligned}
		\int_{T(\epsilon)}|\nabla_y \phi(y)|^2 dy = & \int_{\optimalset{}} \Big\{ \int_{B(\epsilon)} |\nabla_y \phi(y)|^2 \Big|\text{det} \big(I_k + \sum_{l = k+1}^d r^l \tilde{G}(l) \big) \Big|dr^{k+1}...dr^d \Big\} d\M \\ \leq & (1 + A_2 \epsilon) \int_{\optimalset{}} \Big\{ \int_{B(\epsilon)} |\nabla_y \phi(y)|^2 dr^{k+1}...dr^d \Big\} d\M \\ \leq & (1 + A_2 \epsilon)(1 + A_1 \epsilon) \int_{\optimalset{}} \Big\{ \int_{B(\epsilon)} (|d^u \phi|^2 + |\nabla_r \phi(y)|^2) dr^{k+1}...dr^d \Big\} d\M.
	\end{aligned}
	\end{equation*}
    Similarly, we also have
    \begin{equation*}
         \int_{T(\epsilon)}|\nabla_y \phi(y)|^2 dy \geq (1 - A_2 \epsilon)(1 - A_1 \epsilon) \int_{\optimalset{}} \Big\{ \int_{B(\epsilon)} (|d^u \phi(y)|^2 + |\nabla_r \phi(y)|^2) dr^{k+1}...dr^d \Big\} d\M.
    \end{equation*}

\end{itemize}

Combining all these estimates together, we have
\begin{equation*}
\begin{aligned}
\frac{\int_{T(\epsilon)}|\nabla_y \phi(y)|^2 dy}{\int_{T(\epsilon)} |\phi(y)|^2 dy } \leq (1 + A_1 \epsilon)(1 + A_2 \epsilon)^2 \frac{\int_{\optimalset{}} \Big\{ \int_{B(\epsilon)} (|d^u\phi(y)|^2 + |\nabla_r \phi(y)|^2) dr^{k+1}...dr^d \Big\} d\M}{\int_{\optimalset{}} \Big\{ \int_{B(\epsilon)} |\phi(y)|^2 dr^{k+1}...dr^{d} \Big\} d\M}
\end{aligned}
\end{equation*}
and
\begin{equation*}
\begin{aligned}
\frac{\int_{T(\epsilon)}|\nabla_y \phi(y)|^2 dy}{\int_{T(\epsilon)} |\phi(y)|^2 dy } \geq (1 - A_1 \epsilon)(1 - A_2 \epsilon)^2 \frac{\int_{\optimalset{}} \Big\{ \int_{B(\epsilon)} (|d^u \phi(y)|^2 + |\nabla_r \phi(y)|^2) dr^{k+1}...dr^d \Big\} d\M}{\int_{\optimalset{}} \Big\{ \int_{B(\epsilon)} |\phi(y)|^2 dr^{k+1}...dr^{d} \Big\} d\M}.
\end{aligned}
\end{equation*}
These two kinds of estimates imply us that the eigenvalues of Laplacian-Beltrami operator on $T(\epsilon)$ is equivalent to the eigenvalues of Laplacian-Beltrami operator of the following product Riemannian manifold
\begin{equation*}
(\optimalset{} \times B(\epsilon), g_{\optimalset{}} + g_{B(\epsilon)}).
\end{equation*}
It is easy to know that 
\begin{equation*}
\lambda_1(S \times B(\epsilon)) = \min \big\{ \lambda_1(\optimalset{}, g_{\optimalset{}}), \lambda_1(B{(\epsilon)}, g_{B(\epsilon)}) \big\}.
\end{equation*}
Now we complete our discussion of this problem and get conclusion
\begin{equation*}
\lambda_1(\optimalset{})(1 - B\epsilon) \leq \lambda(T(\epsilon)) \leq \lambda_1(\optimalset{})(1 + B \epsilon),
\end{equation*}
for some constant $B = B(A_1, A_2) > 0$ when $\epsilon$ small enough.
\end{proof}

\section{Proof of results in \Cref{section of main result}}

\subsection{Proof of \Cref{thm_main}} \label{proof_thm_main}
The proof need to combine  
Lyapunov approach in \Cref{section_reduction_to_neumann_eigenvalue} with spectral stability analysis in \Cref{section_stability_neumann_eigenvalue}. Recall the final result in \Cref{corollary_reduce_PI_to_Neumann_eigenvalue}, 
    \begin{equation*}
    \PIconstant_\GibbsMeasure \geq \frac{1}{2}\exp(-\bar C) \NeummanEigenvalue(U)
    \end{equation*}
with $U = \optimalset{\sqrt{C\epsilon}}$.
Let us focus on dealing with $\NeummanEigenvalue(U)$ by conclusions in \Cref{section_stability_neumann_eigenvalue},

% \begin{enumerate}

    % \item 
    % Case $(\Circle)$: $\optimalset{}$ is a submanifold of $\R^d$. 
    We use \Cref{proposition_stability} with 
    $\tilde \epsilon = \sqrt{C \epsilon}$, then we have
    \begin{equation*}
    \Eigenvalue(\optimalset{})(1 - B c(C \epsilon)^{\frac{1}{2}}) \leq \NeummanEigenvalue(U) \leq \Eigenvalue(\optimalset{})(1 + B c(C \epsilon)^{\frac{1}{2}}).
    \end{equation*}
    We finally have
    \begin{equation*}
    \PIconstant_\GibbsMeasure \geq \frac{1}{2}\exp(-\bar C) \NeummanEigenvalue(U) \geq \frac{1}{4}\exp(-\bar C) \Eigenvalue(\optimalset{}),  
    \end{equation*}
    when $\epsilon \leq \frac{1}{C} \Big( \frac{1}{2B}\Big)^2$ is small enough.
    Now we finish the proof.

\end{document}